\newtheorem{thm}{Theorem}[section]
\newtheorem*{thm*}{Theorem}
\newtheorem{lem}[thm]{Lemma}
\newtheorem{prop}[thm]{Proposition}
\newtheorem{defn}[thm]{Definition}
\theoremstyle{definition}
\newtheorem{rem}[thm]{Remark}
\newtheorem{ex}[thm]{Example}
\DeclareMathOperator{\Var}{Var}
\DeclareMathOperator{\Ent}{Ent}
\begin{document}

\title{Curvature and other Local inequalities in Markov Semigroups}

\author{Devraj Duggal, Andreas Malliaris, James Melbourne and Cyril Roberto}

\thanks{The second and last author are supported by the Labex MME-DII 
funded by ANR, reference ANR-11-LBX-0023-01. This research has been conducted within the FP2M federation (CNRS FR 2036).}

\address{School of Mathematics, University of Minnesota, Minneapolis, MN, USA}
\email{dugga079@umn.edu}

\address{Institut de Mathématiques de Toulouse (UMR 5219). University of Toulouse \& CNRS. UPS, F-31062 Toulouse Cedex
09, France}
\email{andreas.malliaris@math.univ-toulouse.fr}

\address{Department of Probability and statistics, Centro de Investigación en
matemáticas (CIMAT)}
\email{james.melbourne@cimat.mx}

\address{MODAL'X, UPL, Univ. Paris Nanterre, CNRS, F92000 Nanterre France}
\email{croberto@math.cnrs.fr}

\date{\today}

\maketitle
 
\begin{abstract}

Inspired by the approach of Ivanisvili and Volberg \cite{ivanisvili-volberg} towards functional inequalities 
for probability measures with strictly convex potentials,
we investigate the relationship between curvature bounds in the sense of Bakry-Emery and local functional inequalities.  We will show that not only is the earlier approach for 
strictly convex potentials
extendable to Markov semigroups and simplified through use of the $\Gamma$-calculus, providing a consolidating machinery for obtaining functional inequalities new and old in this general setting, but that a converse also holds. Local inequalities obtained are proven equivalent to Bakry-Emery curvature.  Moreover we will develop this technique for metric measure spaces satisfying the RCD condition, providing a unified approach to functional and isoperimetric inequalities in non-smooth spaces with a synthetic Ricci curvature bound.  
Finally, we are interested in 
commutation properties for semi-group operators on $\mathbb{R}^n$ in the absence of positive curvature, based on a local eigenvalue criteria. 
\end{abstract}

\maketitle

\section{Introduction}

Semi-group interpolations have revealed themselves as a major tool in the study of a variety of problems in geometry and functional inequalities. Beginning (in its modern version) with the pioneering work by Bakry and Emery  in the eighties, related to curvature bounds and hypercontractivity of diffusion operators, \cite{bakry-emery1,bakry-emery2}, such methods have continued to develop to our days with some striking achievements in isoperimetry, optimal transport, measure concentration, information theory etc. We refer the reader to the textbooks \cite{BGL,villani-book} and to the survey by Michel Ledoux \cite{ledoux14} for an account of these topics.   In this paper we explore the relation between curvature and local versions of functional inequalities.
As will be instantiated through several examples, our first main result is an equivalence between Bakry-Emery curvature bounds and local functional inequalities derived through an auxiliary function that satisfies a certain positive definiteness specified  below.

\begin{thm} \label{thm:local-commutation-intro}
Let $(E,\mu,\Gamma)$ be a Markov triple.
Let $\rho \in \mathbb{R}$ and $M \colon I \times \mathbb{R}^+$ be of class $\mathcal{C}^2$, for an interval $I \subseteq \mathbb{R}$, with $M_y \geq 0$ on its domain and not identically $0$. For $t , \alpha \geq 0$, define
$g_\alpha(t) \coloneqq \frac{1-e^{-2\rho t}}{\rho} + \alpha e^{-2\rho t}$,  ($= 2t + \alpha$ when $\rho=0$).\\
Assume that the matrix
$$
A = \left(
\begin{array}{ccccc}
M_{xx} + 2 M_{y} & M_{xy} \\
M_{xy} & M_{yy}+\frac{M_{y}}{2y} \\
\end{array}
\right)
$$
is positive semi-definite. 
Then the following are equivalent:\\
$(i)$ 
For any  $f \in \mathcal{A}$ it holds
$$
\Gamma_2(f) \geq \rho \Gamma(f) ;
$$
$(ii)$
For any $t \geq 0$, any $\alpha \geq 0$ 
\begin{equation} \label{eq:main1-intro}
    M(P_t f,\alpha \Gamma(P_{t}f))
    \leq 
    P_t M(f,g_\alpha(t) \Gamma(f)), \qquad \forall f \in \mathcal{A} .
 \end{equation}
\end{thm}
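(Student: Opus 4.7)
The plan is to establish $(i)\Rightarrow(ii)$ by the classical semigroup interpolation along a carefully chosen path, and $(ii)\Rightarrow(i)$ by a first-order Taylor expansion of \eqref{eq:main1-intro} at $t=0$.

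\emph{Forward direction.} I introduce the interpolation
\[
\Phi(s) \;:=\; P_s\bigl[\,M(P_{t-s}f,\; g_\alpha(s)\,\Gamma(P_{t-s}f))\,\bigr], \qquad s\in[0,t],
\]
so that $\Phi(0) = M(P_t f,\alpha\Gamma(P_t f))$ and $\Phi(t) = P_t M(f, g_\alpha(t)\Gamma f)$, reducing \eqref{eq:main1-intro} to $\Phi'(s)\geq 0$. Writing $F=P_{t-s}f$, $h=g_\alpha(s)$, applying the diffusion chain rule for $L$ together with $L\Gamma F = 2\Gamma(F,LF)+2\Gamma_2 F$, and cancelling the $M_x LF$ terms, one obtains
\[
\Phi'(s) \;=\; P_s\Bigl[\,2h M_y \Gamma_2 F + (M_{xx}+M_y h') \Gamma F + 2h M_{xy}\Gamma(F,\Gamma F) + h^2 M_{yy}\Gamma(\Gamma F)\,\Bigr],
\]
with all partials of $M$ evaluated at $(F,h\Gamma F)$. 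The function $g_\alpha$ is engineered precisely so that $h'(s)+2\rho h(s)=2$; combined with $\Gamma_2 F\geq\rho\Gamma F$, this converts the coefficient of $\Gamma F$ into $M_{xx}+2M_y$, matching the $(1,1)$-entry of $A$. The $(2,2)$ shift $\tfrac{M_y}{2y}$ is accessed via the reinforced Bakry--Ledoux form, equivalent to $(i)$ in the diffusion setting,
\[
\Gamma_2(F) \;\geq\; \rho\,\Gamma(F) + \frac{\Gamma(\Gamma F)}{4\,\Gamma F}.
\]
Substituting into $2hM_y\Gamma_2 F$ and using $y = h\Gamma F$ regroups the bracket as
\[
(M_{xx}+2M_y)\Gamma F + 2h M_{xy}\Gamma(F,\Gamma F) + h^2\bigl(M_{yy}+\tfrac{M_y}{2y}\bigr)\Gamma(\Gamma F).
\]
Evaluating the PSD hypothesis on $A$ at the vector $\bigl(\sqrt{\Gamma F},\,-\operatorname{sign}(M_{xy})\,h\sqrt{\Gamma(\Gamma F)}\bigr)$ and absorbing the resulting cross-term by Cauchy--Schwarz $|\Gamma(F,\Gamma F)|\leq\sqrt{\Gamma F\cdot\Gamma(\Gamma F)}$ delivers $\Phi'(s)\geq 0$, hence \eqref{eq:main1-intro}.

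\emph{Converse.} Both sides of \eqref{eq:main1-intro} coincide at $t=0$, so differentiating at $t=0^+$ and using $P_t = \mathrm{Id}+tL+o(t)$ together with $g_\alpha'(0)=2-2\rho\alpha$ yields the pointwise first-order inequality
\[
2\alpha M_y(\Gamma_2 f - \rho\Gamma f) + (M_{xx}+2M_y)\Gamma f + 2\alpha M_{xy}\Gamma(f,\Gamma f) + \alpha^2 M_{yy}\Gamma(\Gamma f) \;\geq\;0,
\]
valid for every $\alpha\geq 0$ and every $f\in\mathcal A$, with partials at $(f,\alpha\Gamma f)$. The same PSD-plus-Cauchy--Schwarz estimate used in the forward direction bounds the non-curvature remainder from below by $-\tfrac{\alpha M_y}{2\Gamma f}\Gamma(\Gamma f)$; since $M_y\not\equiv 0$, one may localize to a point and a value of $\alpha$ at which $M_y(f,\alpha\Gamma f)>0$, then divide by $2\alpha M_y$ and pass to the limit $\alpha\to\infty$ along admissible sequences. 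The remainder is then of lower order relative to the linear-in-$\alpha$ curvature contribution, forcing $\Gamma_2 f\geq\rho\Gamma f$ pointwise.

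\emph{Main obstacle.} The conceptual crux is recognizing that the nontrivial $\tfrac{M_y}{2y}$ correction in the $(2,2)$-entry of $A$ is the precise signature of the reinforced Bochner inequality, not of the naive curvature bound $\Gamma_2\geq\rho\Gamma$; without invoking this strengthening, the bracketed expression cannot be reduced to the PSD form of $A$. The remainder is careful bookkeeping combining the PSD of $A$, Cauchy--Schwarz among $\Gamma F$, $\Gamma(F,\Gamma F)$, $\Gamma(\Gamma F)$, and the ODE satisfied by $g_\alpha$. In the converse, the technical subtlety lies in the asymptotic analysis in $\alpha$, where the growth of $M_y$, $M_{xy}$, $M_{yy}$ in $y$ must be controlled by the PSD condition so that the linear curvature term emerges cleanly.
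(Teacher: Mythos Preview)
Your forward direction $(i)\Rightarrow(ii)$ is correct and essentially identical to the paper's: same interpolation $H(s)=P_sM(P_{t-s}f,g_\alpha(s)\Gamma(P_{t-s}f))$, same use of the ODE $g_\alpha'+2\rho g_\alpha=2$, same appeal to the reinforced inequality $\Gamma_2\ge\rho\Gamma+\Gamma(\Gamma f)/(4\Gamma f)$ to produce the $(2,2)$-shift, and the same Cauchy--Schwarz/PSD conclusion.

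Your converse has a genuine gap. First, a key point (which the paper stresses): the implication $(ii)\Rightarrow(i)$ does \emph{not} use the PSD of $A$ at all. Second, your PSD-based bound on the remainder, namely
\[
(M_{xx}+2M_y)\Gamma f+2\alpha M_{xy}\Gamma(f,\Gamma f)+\alpha^2 M_{yy}\Gamma(\Gamma f)\ \ge\ -\frac{\alpha M_y}{2\Gamma f}\,\Gamma(\Gamma f),
\]
is of the \emph{same} order in $\alpha$ as the curvature term $2\alpha M_y(\Gamma_2 f-\rho\Gamma f)$, so the claim that ``the remainder is of lower order'' is false, and sending $\alpha\to\infty$ gives nothing new. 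Third, and more fundamentally, even if you accept the resulting inequality $\Gamma_2 f-\rho\Gamma f\ge\Gamma(\Gamma f)/(4\Gamma f)$ at points where $M_y(f,\alpha\Gamma f)>0$, you cannot ``localize'' to such a point for an \emph{arbitrary} $f$: the hypothesis only guarantees $M_y(x_o,y_o)>0$ at some fixed $(x_o,y_o)$, and for a given $f$ and point $x$ the ray $\alpha\mapsto(f(x),\alpha\Gamma f(x))$ need never meet a point where $M_y>0$, since $f(x)$ is fixed and may differ from $x_o$.

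The paper resolves this by a scaling that you are missing: given any nonconstant $g\in\mathcal A$, apply the differentiated inequality to $f=x_o+\varepsilon g$ with $\alpha\varepsilon^2\Gamma(g)=y_o$, so that the evaluation point $(x_o+\varepsilon g,\alpha\varepsilon^2\Gamma g)$ converges to $(x_o,y_o)$, while the remainder terms carry extra powers of $\varepsilon$ after cancelling $\varepsilon^2$ from both sides (e.g.\ $\Gamma(f,\Gamma f)=\varepsilon^3\Gamma(g,\Gamma g)$). Letting $\alpha\to\infty$ (equivalently $\varepsilon\to 0$) then yields $M_y(x_o,y_o)\Gamma_2(g)\ge\rho\,M_y(x_o,y_o)\Gamma(g)$, hence $\Gamma_2(g)\ge\rho\Gamma(g)$ since $M_y(x_o,y_o)>0$. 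This coupled rescaling of $f$ and $\alpha$ is the mechanism that both freezes the evaluation point and makes the remainder genuinely lower order; without it the converse argument does not close.
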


The proof of the theorem will show that the implication $(ii) \Rightarrow (i)$ holds without the assumption that the matrix $A$ is positive semi-definite.  We refer the reader to Section \ref{sec:architecture} for a precise definition of a  Markov triple $(E,\mu,\Gamma)$ that consists of a measurable state space $E$, a carré du champ operator $\Gamma$ associated with an invariant measure $\mu$ and an infinitesimal operator $L$, 
see \cite{BGL}. Associated to the Markov triple comes an algebra of functions $\mathcal{A}$ (see Section \ref{sec:architecture} for the definition). In the above and below we denote by $M_x$,  $M_y$ etc. the partial derivative of a map $M$ with respect to the variable $x,y$, etc.    Note that $\rho$ need not be positive in the above theorem, however when it is, since $P_tf$ converges to $\int fd\mu$ in our setting,  the limit $t \to \infty$ can be taken in the ``local inequality'' \eqref{eq:main1-intro} and one recovers the integrated form
\begin{equation} \label{eq: conclusion of IV}
    M\left(\int f d\mu, 0\right) \leq \int M \left(f,\frac{\Gamma(f)}{\rho} \right)d\mu.
\end{equation}

In particular, the implication $(i) \Rightarrow (ii)$, yields a unified approach to functional inequalities in Markov semigroups with positive curvature. As examples, we recover  Bobkov's functional form of the Gaussian isoperimetric inequality \cite{bobkov-isop}, Poincare inequality, log-Sobolev, as well as Beckner's interpolation of  the log-Sobolev and Poincare inequality \cite{beckner1989generalized}, and exponential integrability results \cite{ivanisvili-russel}.  In the special case that $E=\mathbb{R}^n$,   \eqref{eq: conclusion of IV} recovers the inequalities of Ivanisvili and Volberg whose approach  in \cite{ivanisvili-volberg} inspired this investigation. Further applications of their technique are found in \cite{ivanisvili-volberg-20,ivanisvili-russel}.

 What is more, in Theorem \ref{thm:reverse-local-commutation} we give a
``reverse'' counterpart to Theorem \ref{thm:local-commutation-intro}.   Under mild assumption, involving now the positive semi-definitivness of the matrix 
$$B = \left(
\begin{array}{ccccc}
M_{xx} - 2 M_{y} & M_{xy} \\
M_{xy} & M_{yy}+\frac{M_{y}}{2y} \\
\end{array}
\right),$$ (note the change of sign in front of $2M_y$), we achieve an analogous equivalence between curvature and reverse local functional inequalities.  In particular we will recover the well-known local reverse Poincar\'e and log-Sobolev inequalities used for example by Ledoux \cite{ledoux11} to show by a semi-group argument that under non-negative curature concentration implies isoperimetry, as in \cite{milman}.

In fact, it will be seen in Theorem \ref{th:local-RCD} that both of these interpolation techniques can be extended to $RCD(\rho, \infty)$ spaces via the $\Gamma_2$ formalism developed in these spaces through recent work of Ambrosio and Mondino \cite{ambrosio-mondino}, thus giving a unified approach to functional inequalities in non-smooth spaces.  A highlight of this approach, utilizing the reverse local inequalities, is the following generalization of E. Milman's ``hierarchy reversal'' between measure concentration, isoperimetry, and Orlicz-Sobolev inequalities alluded to above.  We require the following definitions.  Given a metric probability space $(X,d,\mu)$ define its concentration function $\alpha_\mu$ and isoperimetric profile $I_\mu$ as
\begin{equation} \label{eq: concentration and isoperimetric profile}
\alpha_\mu(r)=\sup\left\{1-\mu(A_r) : \mu(A)\geq \frac{1}{2}\right\} , r>0
\qquad 
\mbox{and}
\qquad
I_\mu(t) = \inf \left\{\mu^+(A) : A\subset X \;\;\mu(A)=t \right\} , t\in [0,1],
\end{equation}
where $A_r:=\{x\in X : d(x,y)<r , \text{for some } y\in A\}$ and $\mu^+(A):= \liminf_{r\to 0}\frac{\mu(A_r)-\mu(A)}{r}$.
\begin{thm} \label{th:Milman-RCD}
    Let $(X,d,\mu)$ be a probability metric space that satisfies $RCD(0,\infty)$. Suppose as well that there exists a constant $K=K(X)$ so that $|\nabla f|\leq K|\nabla f|_w$. Then if $\lim_{r\to \infty}\alpha_\mu(r)=0$ and $r_\mu(t)$ is the smallest $r>0$ such that $\alpha_\mu(r)< t$, then there exists $c=c(\alpha_\mu,K)>0$ so that
    \[
    I_\mu(t)\geq \frac{c}{r(t)}t \log \frac{1}{t}, \qquad t\in [0,{1}/{2}].
    \]
\end{thm}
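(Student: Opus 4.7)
The plan is to adapt Ledoux's semigroup proof of E.\ Milman's hierarchy reversal \cite{ledoux11,milman} to the non-smooth RCD setting, using the reverse local functional inequalities of Theorem~\ref{th:local-RCD} in lieu of their smooth Bakry--\'Emery analogues. Fix a Borel set $A\subset X$ with $\mu(A)=t\in(0,1/2]$, and set $\phi_s:=P_s\mathbf{1}_A$. As a first step, I would apply Theorem~\ref{th:local-RCD} with the choice of $M$ producing the reverse local log-Sobolev inequality; after approximating $\mathbf{1}_A$ by $\mathbf{1}_A+\varepsilon$ and letting $\varepsilon\to 0$, this furnishes the gradient bound $2s\,\Gamma(\phi_s)\leq \phi_s^2\log(1/\phi_s)$. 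Therefore $F:=\sqrt{-\log\phi_s}$ satisfies $|\nabla F|_w\leq 1/(2\sqrt{2s})$, and the hypothesis $|\nabla f|\leq K|\nabla f|_w$ upgrades this to a pointwise Lipschitz estimate with constant $L_s:=K/(2\sqrt{2s})$.

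The second step is to combine this Lipschitz bound with the concentration hypothesis. Markov's inequality applied to $\phi_s$, whose integral is $t$, forces the median of $\phi_s$ to be $\leq 2t$, hence $F$ has a median $m_F\geq\sqrt{\log(1/(2t))}$. L\'evy's inequality then yields $\mu(|F-m_F|>\lambda)\leq 2\alpha_\mu(\lambda/L_s)$. Calibrating $s=s(t)$ so that $1/L_s$ is matched to the concentration radius $r(t)$, one can pin the distribution of $\phi_s$ between two multiples of $t$ (up to logarithmically small corrections) on a set of $\mu$-measure $\geq 1-t/2$. Simultaneously, the reverse local Poincar\'e inequality (another application of Theorem~\ref{th:local-RCD}) provides, via the $\Gamma_2$-formalism of Ambrosio--Mondino~\cite{ambrosio-mondino}, a Buser-type upper bound
\[
    \int |\phi_s-\mathbf{1}_A|\,d\mu = 2\int_{A^c}\phi_s\,d\mu \leq C\sqrt{s}\,\mu^+(A).
\]
Against this, the previous step delivers a matching lower bound $\int_{A^c}\phi_s\,d\mu\gtrsim t\log(1/t)\,\sqrt{s}/r(t)$, and rearranging gives the announced isoperimetric inequality.

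The main obstacle is the delicate calibration of $s$ and the extraction of the full $\log(1/t)$ amplification factor, rather than the sub-optimal $\sqrt{\log(1/t)}$ that a na\"ive combination of Buser with Gaussian-type concentration would produce. The right accounting exploits the Lipschitz pinning of $F$ simultaneously at several scales $\lambda$, integrating the concentration tails against $\mu$ and leveraging the fact that the lower bound on $m_F$ is itself of logarithmic order, so that the effective scale on which $\phi_s$ is comparable to $t$ is $r(t)/\log(1/t)$ rather than $r(t)/\sqrt{\log(1/t)}$. Subordinate technical hurdles include the regularization of $\mathbf{1}_A$ to sit in the admissible class of Theorem~\ref{th:local-RCD} and the weak-to-pointwise gradient passage afforded by the constant $K$, both handled through the Ambrosio--Mondino calculus \cite{ambrosio-mondino}.
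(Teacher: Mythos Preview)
Your proposal is correct and follows essentially the same approach as the paper: the paper does not give a detailed proof but simply points out that Ledoux's semigroup argument \cite{ledoux11} carries over verbatim once the reverse local Poincar\'e and log-Sobolev inequalities (supplied by Theorem~\ref{th:local-RCD}), the commutation $\sqrt{\Gamma(P_tg)}\leq P_t\sqrt{\Gamma(g)}$ (from Savar\'e \cite{savare}), and the hypothesis $|\nabla f|\leq K|\nabla f|_w$ (to convert weak-gradient bounds into genuine Lipschitz bounds) are in place. Your outline reproduces Ledoux's scheme with these substitutions and is in fact more detailed than the paper's own discussion; the only ingredient you do not name explicitly is the Savar\'e commutation, but your use of the reverse Poincar\'e to obtain the uniform bound $\sqrt{\Gamma(\phi_s)}\lesssim 1/\sqrt{s}$ serves the same purpose for indicator data.
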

We will also take a particular interest in log-concave probability measures on $\mathbb{R}^n$ for large $n$ (which we approach through the hypothesis of a lower curvature bound $\rho = 0$ without dimensional restriction). In this context there has been an intense study of the behavior of geometric and functional inequalities with $n$ tending to infinity.  For instance it is known that a Poincar\'e Inequality \eqref{eq:Poincare-intro} holds (with some constant in place of $\rho$) for log-concave probability measures even without positive curvature. Striking achievements in this direction are related to the celebrated KLS's conjecture \cite{KLS},  with an almost full resolution by Klartag \cite{klartag} (this would imply that a universal constant in the Poincare inequality for log-concave probability measures on $\mathbb{R}^n$).
Using stochastic calculus and the Feynman-Kac Formula, we will prove the following theorem which constitutes a generalisation of the well known commutation property of Markov semigroups with a strictly positive curvature bound, see item $(iii)$ of Section
\ref{sec:architecture}, and further discussion in Section \ref{sec:second-look}.

\begin{thm} \label{thm:Devraj-intro}
Assume that there exist $\beta >0$, $p > 1$ and a function $g \geq 1$ on $\mathbb{R}^n$ such that 
\begin{equation} \label{eq:Devraj-intro}
    \frac{Lg(x)}{g(x)} \leq p\rho(x) - \beta, \qquad x \in \mathbb{R}^n .
\end{equation}
Then, for all $f$ smooth enough, it holds
$$
|\nabla P_t f|^p \leq e^{-\beta t} g(x) \left( P_t |\nabla f|^\frac{p}{p-1} \right)^{p-1}, \qquad t >0 .
$$
\end{thm}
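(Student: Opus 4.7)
The plan is to combine a Bismut-type stochastic representation of $\nabla P_t f$ with a Feynman--Kac comparison driven by the Lyapunov function $g$. I interpret $L$ as a diffusion generator $L=\Delta-\nabla V\cdot\nabla$ on $\mathbb{R}^n$ and $\rho(x)$ as the pointwise Bakry--Emery curvature, i.e.\ the smallest eigenvalue of $\nabla^2V(x)$, consistent with the ``local eigenvalue criteria'' framing of Section \ref{sec:second-look}. Let $X_t^x$ be the associated stochastic flow and $J_t:=\nabla_xX_t^x$ its Jacobian with respect to the initial point. Differentiating the SDE in $x$ gives the pathwise linear ODE $\tfrac{d}{dt}J_t=-\nabla^2V(X_t)J_t$ (no stochastic correction, since the diffusion coefficient is constant), hence
$$\tfrac{d}{dt}|J_tv|^2 = -2\langle J_tv,\nabla^2V(X_t)J_tv\rangle \leq -2\rho(X_t)|J_tv|^2,$$
which by Gronwall yields the pathwise operator bound $|J_t|_{\mathrm{op}}\leq\exp\!\bigl(-\int_0^t\rho(X_s^x)\,ds\bigr)$. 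Combined with the derivative-flow identity $\nabla P_tf(x)=\mathbb{E}[J_t^\top\nabla f(X_t^x)]$, this produces the pathwise gradient estimate
$$|\nabla P_tf(x)|\leq\mathbb{E}\!\left[e^{-\int_0^t\rho(X_s^x)\,ds}\,|\nabla f(X_t^x)|\right].$$

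Next I would apply H\"older with conjugate exponents $p$ and $p/(p-1)$ and raise to the $p$-th power, which gives
$$|\nabla P_tf(x)|^p \leq \mathbb{E}_x\!\left[e^{-p\int_0^t\rho(X_s)\,ds}\right]\cdot\bigl(P_t|\nabla f|^{p/(p-1)}\bigr)^{p-1}.$$
It then suffices to prove that $u(t,x):=\mathbb{E}_x\bigl[e^{-p\int_0^t\rho(X_s)\,ds}\bigr]\leq e^{-\beta t}g(x)$. By Feynman--Kac, $u$ solves the parabolic problem $\partial_tu=Lu-p\rho\,u$ with $u(0,\cdot)=1$. Testing the candidate supersolution $h(t,x):=e^{-\beta t}g(x)$ one obtains
$$\partial_th - Lh + p\rho\,h = e^{-\beta t}g(x)\!\left(p\rho(x)-\beta-\frac{Lg(x)}{g(x)}\right)\geq 0$$
precisely by hypothesis \eqref{eq:Devraj-intro}, while $h(0,\cdot)=g\geq 1=u(0,\cdot)$. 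The parabolic maximum principle applied to $h-u$ then yields $u\leq h$ and closes the argument.

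The main obstacle will be to make these formal manipulations rigorous: one must justify differentiating $P_tf$ under the expectation through moment estimates on $J_t$ and $\nabla f(X_t^x)$, and invoke a comparison principle in a setting where $\rho$ need not be bounded below, so the Feynman--Kac kernel $e^{-p\int\rho}$ is not a priori integrable. Both points are handled by standard localization via exit times from expanding balls, exploiting that the strict inequality $\beta>0$ and the uniform lower bound $g\geq 1$ control the growth of the supersolution $h$ and place $u$ and $h$ in a class of uniqueness. The key qualitative content of the proof is that \eqref{eq:Devraj-intro} plays exactly the role that a global positive curvature bound plays in the classical commutation: it guarantees that $e^{-\beta t}g(x)$ dominates the Feynman--Kac kernel generated by the pointwise curvature along the trajectories of the underlying diffusion.
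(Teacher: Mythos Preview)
Your proof is correct and follows the same overall strategy as the paper: establish the pathwise gradient bound $|\nabla P_tf(x)|\leq\mathbb{E}_x[|\nabla f(X_t)|e^{-\int_0^t\rho(X_s)ds}]$, apply H\"older, and then show the Feynman--Kac weight is dominated by $e^{-\beta t}g(x)$. The only notable difference is in this last step. Where you invoke a parabolic maximum principle for $\partial_t-L+p\rho$ (and then have to worry about uniqueness classes and unbounded $\rho$), the paper instead observes directly that $Y_t:=g(X_t)e^{-\int_0^t\frac{Lg}{g}(X_s)ds}$ is a positive local martingale (an immediate It\^o computation), hence a supermartingale; combined with $g\geq 1$ this gives $\mathbb{E}_x[e^{-\int_0^t\frac{Lg}{g}(X_s)ds}]\leq\mathbb{E}_x[Y_t]\leq g(x)$, and the hypothesis $p\rho\geq\frac{Lg}{g}+\beta$ is inserted pointwise inside the expectation before H\"older. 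This super-martingale route is the probabilistic twin of your comparison argument but sidesteps entirely the localization and growth issues you flag at the end. The paper also derives the pathwise gradient bound by a $\Gamma_2$/semigroup interpolation rather than your Jacobian-flow computation, but these are standard equivalent derivations.
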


\bigskip

Let us outline the remaining content of the paper.  In Section \ref{sec:architecture} we will discuss key concepts in the concrete setting of the Ornstein-Uhlenbeck semigroup and develop the requisite definitions of the paper.  In Section \ref{sec:commutation} we will prove the main result of the paper, Theorem \ref{thm:local-commutation-intro}.
The commutation property \eqref{eq:main1-intro} allows us to recover many known local inequalities, 
all of them being equivalent to the Bakry-Emery condition $\Gamma_2 \geq \rho \Gamma$, some of which being, to the best of our knowledge, new.
We refer the reader to Theorem \ref{thm:local-commutation} for a refined version of Theorem \ref{thm:local-commutation-intro}.  
As already mentioned the proof of the above theorem is essentially elementary thanks to the $\Gamma_2$-formalism. 

As alluded the same techniques of proof will allow us to show (see Theorem \ref{thm:reverse-local-commutation}) that the $BE(\rho,\infty)$-condition is also equivalent to many ``reverse" local inequalities. We close the section with a discussion of an integrated version of Theorem \ref{thm:local-commutation}.  As is well known in the (one variable) classical approach with $\Psi(s)=P_{t-s}(\Phi(P_sf))$, in Theorem \ref{thm:2variables}, an analogous relaxation of the $BE(\rho,\infty)$-condition to an integrated condition of the form 
$\int M_y(f,\Gamma(f))\Gamma_2(f)d\mu 
\geq 
\rho \int M_y(f,\Gamma(f)) \Gamma(f) d\mu$ succeeds as well.  In Section \ref{sec:RCD} we take advantage of the work, using the $\Gamma_2$-formalism in  $RCD(\rho,\infty)$ spaces, that is done by Ambrosio and Mondino \cite{ambrosio-mondino} to  prove an analogue of Theorem \ref{thm:local-commutation-intro}  in such spaces.  As a byproduct of the reverse Poincar\'e and log-Sobolev inequalities we may also extend Ledoux's semi-group argument of E. Milman's ``reversing the hierarchy" theorem to $RCD(\rho,\infty)$ spaces
(see Theorem \ref{th:Milman-RCD}).  Section \ref{sec: dimensions and higher orders} treats higher order versions of the $M$ function approach to recover an inequality of Ledoux-Houdr\'e-Kagan as application.  Further, interpolations under $BE(\rho, n)$ are considered for dimension $n < \infty$, and Lichnerowicz's bound for the spectral gap for the Laplace-Beltrami operator on manifold with a strictly positive Ricci curvature bound is recovered as an application. 
We close with Section \ref{sec:second-look} where Theorem \ref{thm:Devraj-intro} is proven, and example applications to measures with spherical or coordinate symmetries are considered.




\section{The underlying architecture} \label{sec:architecture}

In this section we define the notion of Markov triple, of curvature dimension inequality of Bakry and Emery, and similar notions on $RCD$ spaces.  
We first warm up by recalling the celebrated Ornstein-Uhlenbeck semi-group in $\mathbb{R}^n$ that can be described (Mehler's representation formula) as
$$
P_tf(x) = \mathbb{E}_x(f(X_t)) = \int f \left( e^{-t}x + \sqrt{1-e^{-2t}}y\right) d\gamma_n(y)
$$
for any function $f \colon \mathbb{R}^n \to \mathbb{R}$ in $L^1(\gamma_n)$, where $\gamma_n$ is the $n$-dimensional standard Gaussian probability measure, and $X_t$ the solution of the stochastic differential equation
$dX_t=\sqrt{2}dB_t-X_tdt$, $X_0=x$, with $B_t$  the standard Brownian motion in $\mathbb{R}^n$. The operators $P_t$ are contractions in all $L^p(\gamma)$, $p \geq 1$, are symmetric in $L^2(\gamma_n)$ and with infinitesimal generator $L=\Delta-x\cdot \nabla$, where $\Delta$ denotes the Laplacian and $\nabla$ the gradient operators.  

As one can immediately see from Mehler's formula
$\nabla P_t f = e^{-t} P_t \nabla f$ and therefore
\begin{equation} \label{eq:OU}
|\nabla P_tf| \leq e^{-t} P_t |\nabla f|
\qquad \mbox{and} \qquad |\nabla P_tf|^2 \leq e^{-2t} P_t |\nabla f|^2
\end{equation}
where $|\cdot|$ denotes the Euclidean norm.
As already mentioned, such commutation bounds are at the heart of many applications. Let us mention for example the 2 pages proof of the celebrated Bobkov's functional form \cite{bobkov-isop} of the Gaussian isoperimetric Inequality by Ledoux \cite{ledoux96} as a (short and easy to read) illustration. 

 Consider more generally a Markov diffusion semi-group of the form $L=\Delta-\nabla V \cdot \nabla$ on $\mathbb{R}^n$, and denote by $(P_t)_{t \geq 0}$ its associated semi-group, that is symmetric in $L^2(\mu)$, where $V$ is such that $\mu$, with density $e^{-V}$, is a probability measure on $\mathbb{R}^n$.
The (Paul André Meyer)``carr\'e du champ" operator is
$$
\Gamma(f,g) := \frac{1}{2} \left( Lfg - fLg - gLf\right) = \nabla f \cdot \nabla g
$$
and its iterated
$$
\Gamma_2(f,g) := \frac{1}{2} \left( L\Gamma(f,g) -\Gamma(f,Lg) - \Gamma(Lf,g)\right)
$$
for $f,g \colon \mathbb{R}^n \to \mathbb{R}$ smooth enough. Set $\Gamma(f)=\Gamma(f,f)$ and $\Gamma_2(f)=\Gamma_2(f,f)$.


Now a classical generalization of the Ornstein-Uhlenbeck commutation properties \eqref{eq:OU} reads as follows (see \textit{e.g.}\ \cite[Chapter 5]{ane}): for $\rho_o \in \mathbb{R}$ the following are equivalent\\

$(i)$ for all $f$ smooth enough $\Gamma_2(f) \geq \rho_o \Gamma(f)$,

$(ii)$ for all $f$ smooth enough and all $t>0$,
$|\nabla P_tf| \leq e^{-\rho_o t} P_t |\nabla f|$,

$(iii)$
for all $f$ smooth enough and all $t>0$,
$|\nabla P_tf|^2 \leq e^{-2\rho_o t} P_t |\nabla f|^2$.\\

The condition appearing in $(i)$ is known as the $BE(\rho_o,\infty)$-condition, to refer to Bakry and Emery (often also mentioned, in the literature, as the curvature dimension $CD(\rho_o,\infty)$-condition
).  For $x \in \mathbb{R}^n$, denote by $\rho(x)$ the smallest eigenvalue of the Hessian matrix of $V$, $\mathrm{Hess} V(x)$, namely
\begin{equation} \label{eq:rho}
\rho(x) \coloneqq \inf_{y\in \mathbb{R}^n} \frac{y^T \mathrm{Hess} V(x) y}{|y|^2}.
\end{equation}
Then  the explicit expression $\Gamma_2(f)(x)=\sum_{i,j=1}^n \left(\frac{\partial^2f}{\partial x_i \partial x_j}(x)\right)^2  + (\nabla f(x))^T \mathrm{Hess}V(x) \nabla f(x)$ shows that $\Gamma_2(f)(x) \geq \rho(x) \Gamma(f)(x)$ for all $f$ smooth enough. It is plain that $(i)$, $(ii)$ and $(iii)$ are still equivalent, at point $x$, with $\rho_o$ replaced by $\rho(x)$. In particular, if $\rho(x)>0$, $|\nabla P_tf|(x)$ vanishes in the limit $t$ tends to infinity.

A particularly interesting case is when, $\rho(x) \geq \rho_o >0$ for all $x \in \mathbb{R}$, \textit{i.e.}\ when $V$ is uniformly convex. In that case $BE(\rho_o,\infty)$ holds and $(ii)$ and $(iii)$ have a (uniform) convergence content. To illustrate further this convergence, observe that, as it is well known, in fact, the $BE(\rho_o,\infty)$-condition $(i)$ is also equivalent to the following local Poincar\'e inequality\\

$(iv)$
for all $f$ smooth enough and all $t>0$,
$P_t(f^2) - (P_t f)^2 \leq \frac{1-e^{-2\rho_o t}}{\rho_o} P_t(\Gamma(f))$.\\

When $\rho_o >0$, in the limit $t\to\infty$, implies the usual Poincar\'e inequality for the reversible measure $\mu$. Namely,
\begin{equation} \label{eq:Poincare-intro}
\Var_\mu(f) \leq \frac{1}{\rho_o} \int |\nabla f|^2d\mu .
\end{equation}
This inequality is optimal for instance for the standard Gaussian measure $\gamma_n$ for which $\rho_o=1$ (optimality is achieved for linear functions).

\bigskip

In Section \ref{sec:second-look} we will explicitly construct examples of function $g$, in the local eigenvalue condition, when $V$ behaves like a power (either spherically symmetric, or sum of powers).
  As mentioned (a non-universal) Poincar\'e Inequality \eqref{eq:Poincare-intro} holds for log-concave probability measures, while the Bakry-Emery condition $(i)$ fails to be true.  In loose analogy,  Theorem \ref{thm:Devraj-intro} delivers a weak version of the commutation property $(iii)$ when $(i)$ fails.
Other types of commutation properties can be found in \cite{QRZ} with applications to hypercontractivity properties for inhomogeneous
diffusion operators \cite{RZ}.
To conclude with Theorem \ref{thm:Devraj-intro} we observe that the local eigenvalue condition is reminiscent of the following Lyapunov condition considered in \cite{cgww}  
$$
\frac{LW}{W} \leq -\phi + b \mathds{1}_{B(0,r)}
$$
where $W, \phi$ are functions satisfying $W \geq 1$, $\phi > \phi_o >0$ and $b,r>0$.
Notice however the change of sign between $\phi$ and $\rho$ when one considers, say, $V$ convex. 
It seems that, in general, there is no direct relationship between these two conditions.
We will continue this discussion in  Section \ref{sec:second-look}. In what follows we develop the requisite generalities for the reading of this article.

\subsection{Markov triple}

We follow, and refer to, the monograph by Bakry, Gentil and Ledoux \cite{BGL} for the definition of a Markov triple $(E,\mu,\Gamma)$. The precise definition is somehow hard to state in few pages. We therefore only focus here on the different properties that are useful and of interest for us. Again, we refer to \cite[Chapter 3]{BGL} for a detailed presentation.

A triple $(E,\mu,\Gamma)$ consists of a probability space $(E,\mathcal{E},\mu)$, an algebra  $\mathcal{A}$ of real-valued bounded functions and a carré du champ operator
$\Gamma \colon \mathcal{A} \times \mathcal{A} \to \mathcal{A}$ satisfying a number of properties that we now describe.

 $\mathcal{A}$ is stable under the action of smooth functions that vanish at $0$, in the sense that $f_i \in \mathcal{A}$ and $\Psi \in \mathcal{C}^\infty(\mathbb{R}^k)$ such that $\Psi(0) = 0$ implies $\Psi(f_1, \dots, f_k) \in \mathcal{A}$. It is dense in all $\mathbb{L}^p(\mu)$-spaces, $1 \leq p < \infty$ and contains the constant function $1$.

The map $\Gamma$ is symmetric bi-linear, it satisfies $\Gamma(f) \coloneqq \Gamma(f,f) \geq 0$ for all $f \in \mathcal{A}$.
Furthermore, $\Gamma$ enjoys the diffusion property: for any choice $f_1,\dots,f_k,g$ of functions in $\mathcal{A}$, any smooth $\mathcal{C}^\infty$ function $\Psi \colon \mathbb{R}^k \to \mathbb{R}$, vanishing at $0$,
$$
\Gamma(\Psi(f_1,\dots,f_k),g)=
\sum_{i=1}^k \partial_i \Psi(f_1,\dots,f_k)\Gamma(f_i,g) .
$$
This encodes the fact that $\Gamma$ is a derivation in each of its arguments and, specifying to the polynomial function $\Psi(x,y) = xy$, the diffusion property amounts to the chain rule
\begin{equation*} 
    \Gamma(fg,h) = f\Gamma(g,h) + g \Gamma(f,h), \qquad f,g,h \in \mathcal{A} .
\end{equation*}
From $\Gamma$ one defines the associated symmetric diffusion operator $L$ through the integration by parts formula
\begin{equation*} 
    \int_E g Lf d\mu  = - \int_E \Gamma(f,g) d\mu, \qquad f,g \in \mathcal{A} .
\end{equation*}
We assume that for all $f \in \mathcal{A}$, $Lf \in \mathcal{A}$, and, to in order to uniquely define $L$, one assumes that for any $f\in \mathcal{A}$, there exists $C(f)$ such that, for all $g \in \mathcal{A}$, $ \left|\int_E\Gamma(f,g)d\mu \right| \leq C(f)\|g\|_2$. By symmetry of $\Gamma$, $\mu$ is reversible for $L$ in the sense that $\int_E gLfd\mu = \int_E fLgd\mu$. Also, since by the chain rule $\Gamma(f,1)=0$,  $\int Lfd\mu=0$ for all $f \in \mathcal{A}$. Now the diffusion property transfers to $L$, that is
\begin{equation*} 
   L\Psi(f_1,\dots,f_k)=
\sum_{i=1}^k \partial_i \Psi(f_1,\dots,f_k)Lf_i 
+
\sum_{i,j=1}^k \partial_{ij}^2 \Psi(f_1,\dots,f_k)\Gamma(f_i,f_j)
\end{equation*}
that reduces to
$$
L\Psi(f) = \Psi'(f)Lf + \Psi''(f)\Gamma(f)
$$
when $\Psi$ depends only on one variable.

We denote by $(P_t)_{t \geq 0}$ the semi-group with infinitesimal generator $L$ that solves the equation $\partial_t P_t=LP_t=P_tL$ on the domain $\mathcal{D}(L)$
(domain whose precise construction can be found in \cite[Section 3.1.4]{BGL}). The semi-group preserve positivity, $P_tf \geq 0$ whenever $f \geq 0$, and we assume that it is mass preserving, \textit{i.e.}\
 $P_t 1 = 1$ for all $t \geq 0$.
Finally we assume that $L$ is ergodic in the sense that $Lf=0$ for $f \in \mathcal{D}(L)$ implies that $f$ is constant.


\bigskip

As an example of illustration, one may consider $M$ an $n$-dimensional smooth connected complete Riemannian manifold equipped with a probability measure $\mu$ with smooth density with respect to the volume measure. The algebra $\mathcal{A}$ is generated by all $\mathcal{C}^\infty$-smooth functions on $M$ plus constant functions. Again, we refer to \cite{BGL} for the precise meaning of the word generated. Then, on such functions, $\Gamma$ takes the form
$$
\Gamma(f,g)= \sum_{i,j=1}^n g^{ij} \partial_if \partial_j g
$$
and
$$
Lf(x)=\sum_{i,j=1}^{n}g^{ij}(x)\frac{\partial^{2}f}{\partial x^{i}\partial x^{j}}(x)+\sum_{i=1}^{n}b^{i}(x)\frac{\partial f}{\partial x^{i}}(x)
$$
for $g^{ij},b^{i}\in C^{\infty}$ and $G=(g^{ij}(x))_{1 \leq i,j \leq n}$ a symmetric positive-definite matrix and $(b^i(x))_{1 \leq i \leq n}$ a vector. The positive semi-definiteness of the matrix $G$ ensures that the corresponding generator $L$ is elliptic. 


Let us now give the simple translation of the objects defined above as they most frequently appear in the less abstract Euclidean setting $\mathbf{R}^{n}$. One may consider a second order differential operator of the form $$L=\frac{1}{2}\sum_{i,j=1}^{n}\sum_{k=1}^{n}\sigma_{k}^{i}\sigma_{k}^{j}\partial_{ij}f+\sum_{i=1}^{n}b^{i}\partial_{i}f$$ for $\sigma$ an $n$ by $n$ matrix and $b$ a vector of dimension $n$.

The infinitesimal generator of the Ornstein-Ulhenbeck semi-group, and more generally any Kolmogorov operators of the form $\Delta - \nabla V \cdot \nabla$ enters the above framework.

It is a standard exercise to show that one may define the semigroup as $$P_{t}f(x)=\mathbf{E}_{x}\left[f(X_{t})\right]=\mathbf{E}\left[f(X_{t})|X_{0}=x\right]$$ where $X_{t}^{x}$ is described by the following Stochastic Differential Equation 
\begin{equation} \label{eq:SPDE}
\begin{cases}
dX_{t}^{x} & \!\!\!=\sigma\left(X_{t}^{x}\right)dB_{t}+b(X_{t}^{x})dt \\
X_{0}& \!\!\!=x
\end{cases}
\end{equation}
where $B_{t}$ is a standard Brownian motion in $\mathbf{R}^{n}$.

\subsection{Bakry-Emery condition}

Following \cite{bakry-emery2} we introduce the iterated operator
$$
\Gamma_2(f,g) \coloneqq \frac{1}{2}
\left( L\Gamma(f,g) - \Gamma(f,Lg) - \Gamma(Lf,g) \right), \qquad f,g \in \mathcal{A} 
$$
and write for simplicity $\Gamma_2(f) = \Gamma_2(f,f)$. It is a bilinear map (not necessarily positive).


Let us briefly discuss the notion of curvature (from both an analytic and geometric perspective) which lies at the heart of this work for a smooth connected complete Riemannian manifold $(M,g)$ of dimension $n$. Denote by $\Delta$ the corresponding Laplace-Beltrami operator. By  Bochner's formula, one has the following fundamental equality 
$$
\Gamma_{2}(f)=Ric(\nabla f,\nabla f)+||\mathrm{Hess} (f)||_{2}^{2}
$$ 
where $Ric$ stands for the Ricci tensor on $M$, $\mathrm{Hess}$ is the Hessian matrix and $\| \cdot \|_2$ is the Hilbert-Schmit norm. Now suppose that there exists $\rho$, such that for each $x\in M$ and vectors $u,v$ in the tangent space $T_{x}(M)$, it holds that $Ric_{x}(u,v)\geq\rho g_{x}(u,v)$. Additionally, by Cauchy-Schwarz one may show that $||\mathrm{Hess}(f)||_{2}^{2}\geq\frac{1}{n}(\Delta f)^{2}$. Consequently, under such a curvature condition, one deems quite natural the following curvature condition that we call $BE(\rho,n)$-condition, to refer to Bakry and Emery.

We say that the $BE(\rho,n)$-condition holds (for some $\rho \in \mathbb{R}$ and $n \in [1,\infty]$, for the Markov triple under consideration) if, for any $f \in \mathcal{A}$
$$
\Gamma_2(f) \geq \rho \Gamma(f) + \frac{1}{n}(Lf)^2 .
$$
We may equivalently say that the Markov triple
 satisfies a curvature-dimension inequality $BE(\rho,n)$. In this terminology it is understood that, when $n=\infty$, the condition reduces to $\Gamma_2(f) \geq \rho \Gamma(f)$.
In fact, it is classical (see \textit{e.g.}\ \cite[Lemma 5.4.4]{ane}) that the Bakry-\'Emery condition $BE(\rho,\infty)$ is equivalent to the following  enhanced condition
\begin{equation} \label{eq:enhanced}
\Gamma_2(f) \geq \rho \Gamma(f) + \frac{\Gamma(\Gamma(f))}{4\Gamma(f)}
\end{equation}
that holds for any $f \in \mathcal{A}$ non constant. We refer the reader to the last section for more on such an enhanced condition under 
$BE(\rho,n)$.



Classical examples of the curvature condition $BE(\rho,n)$ for precise values of $\rho$ and $n$ are the following:
\begin{enumerate}
    \item For $L$ defined to be the classical Laplacian $\Delta$ on $\mathbf{R}^{n}$, one has $BE(0,n)$. 
    \item For $L=\Delta_{s}$ the Laplace Beltrami Operator on the unit sphere $\mathbf{S}^{n-1}\in \mathbf{R}^{n}$, the curvature condition $BE(n-2,n-1)$ is satisfied. Note a simple scaling argument shows that the sphere of dimension n-1 of radius $r>0$ has curvature equal to $\frac{n-2}{r^{2}}$.
    \item The Ornstein-Uhlenbeck semi-group in $\mathbf{R}^{n}$, generated by $L=\Delta-x\cdot\nabla$, with invariant measure the standard Gaussian measure, satisfies the  $BE(\rho,n)$-condition for $\rho=1$ and $n=+\infty$. The fact that the Ornstein-Uhlenbeck semigroup satisfies $BE(1,\infty)$ should appear quite intuitive pursuant Poincaré's observation that stipulates that the $n$ dimensional sphere of radius $\sqrt{n}$ tends towards the Gaussian space. 
\end{enumerate}

While the condition $BE(\rho,n)$ may appear inherent to the geometry of the space, the renowned Bakry-Émery condition $BE(\rho,\infty)$ gives very powerful analytic equivalences as we already mentioned in the introduction through the equivalences of $(i)$...$(iv)$.

\subsection{$RCD(\rho,\infty)$ spaces}

The notion of a $CD$ (curvature dimension) space was introduced by Sturm \cite{sturm1,sturm2} and independently by Lott and Villani \cite{lott-villani}. It is a synthetic notion of a lower bound on Ricci curvature for {metric measure spaces} based on transport interpolation in the space of probability measures with finite second moment. Later on 
the $RCD$ condition that we now describe, was introduced by Ambrosio, Gigli and Savar\'e \cite{AGS}.

Let $(X,d,\mu)$ be a metric probability length space with $(X,d)$ complete and separable equipped with a Borel probability measure $\mu$ of full support.

Define $\mathcal{P}_2(X)$ and $\mathcal{P}_2(X,d,m)$ to be the following subsets of the space $\mathcal{P}(X)$ of Borel probability measures on $X$, 
\[
\mathcal{P}_2(X)
\coloneqq
\left\{ m \in \mathcal{P}(X): \int_{X}d^2(x,x_0)m(dx) <\infty \text{ for some (and thus for all) } x_0 \in X \right\}
\]
and, using the usual notation for being absolutely continuous with respect to $\mu$,
\[
\mathcal{P}_2(X,d,\mu)
\coloneqq
\{m\in \mathcal{P}_2(X) : m\ll \mu\}.
\]
Next, define the $L^2-$Wassertein distance $W_2$ on $\mathcal{P}_2(X)$ as 
\[
W^2_2(m,\nu)
\coloneqq
\inf_{\pi\in \Pi(m,\nu)} \int_{X\times X}d^2(x,y)\pi(dxdy)
\]
where the infimum runs over all couplings $\pi \in \Pi(m,\nu) \coloneqq \{\pi\in \mathcal{P}(X\times X ) : (P_1)_{\#}\pi= m \text{     and     } (P_2)_{\#}\pi= \nu\}$ with first marginal $m$ and second marginal $\nu$.
The set of (constant speed) geodesics of $X$ is denoted by
$Geo(X):= \{ \gamma\in C([0,1];X) : d(\gamma_s,\gamma_t)=|t-s|d(\gamma_0,\gamma_1)  \}$
and $e_t$ stands for the evaluation map $e_t:Geo(X)\to X$ defined by $e_t(\gamma)=\gamma_t$.

It is  known that if $(\mu_t)_{t\in[0,1]}$ is a (constant speed) geodesic in $(\mathcal{P}_2(X),W_2)$,  it can be lifted to a probability measure $\nu \in \mathcal{P}(Geo(X))$ such that $(e_t)_{\#}\nu = \mu_t$ \cite{villani-book}. 

Moreover, if $\mu_0,\mu_1\in\mathcal{P}_2(X)$ the set of those $\nu \in \mathcal{P}(Geo(X))$ such that $(e_0,e_1)_{\#}\nu $ is an optimal coupling for the $L^2-$Wasserstein distance will be denoted by $OptGeo(\mu_0,\mu_1)$ .

The relative entropy functional $\Ent_\mu \colon \mathcal{P}_2(X) \to (-\infty,+\infty]$ is defined as
\[
\Ent_\mu(\nu)=\int f \log f d\mu, 
\]
for $\nu=f\mu \in \mathcal{P}_2(X,d,\mu)$,
and $+\infty$ otherwise. 
Finally set 
$\mathcal{D}(\Ent_\mu) \coloneqq \{m\in \mathcal{P}_2(X) : \Ent_\mu(m)<\infty  \}$.

\begin{defn}[Sturm-Lott-Villani] 
Let $\rho\in \mathbb{R}$. We say that $(X,d,\mu)$ is a $CD(\rho,\infty)$ space  if for every $\nu_0,\nu_1 \in \mathcal{D}(Ent_\mu)$ there exists a geodesic $(\nu_t)_{t\in [0,1]}\subset \mathcal{P}_2(X)$ such that 
\[
Ent_\mu(\nu_t)\leq (1-t)Ent_\mu(\nu_0)+tEnt_\mu(\nu_1)-\frac{\rho}{2}t(1-t)W_2^2(\nu_0.\nu_1).
\]
\end{defn}

For connections between $CD(\rho,\infty)$ and $BE(\rho,\infty)$ we refer the reader to Villani's book \cite{villani-book}. In particular, on smooth Riemannian manifold both conditions coincide.

In order to move to the stronger  $RCD$-spaces we need more definitions and in particular to introduce a notion of gradient (that, as the reader could observe, did not appear in the definition of $CD$ spaces while it was central in the definition of the $BE(\rho,\infty)$ condition by means of the carr\'e du champs operator). This will be achieved through the notion of Cheeger energy.

Denote by $Lip(X)$ the space of Lipschitz functions on $(X,d)$ and by $|\nabla f|$ the local Lipshitz constant of $f$, \textit{i.e.}\ $|\nabla f|(x) \coloneqq \limsup_{y\to x}\frac{|f(x)-f(y)|}{d(x,y)}$ and $|\nabla f|(x)=0$ if $x\in X$ is an isolated point.

The Cheeger energy of $f\in L^2(X,\mu)$ is defined then  as
\[
Ch(f):= \inf\{\liminf_n \frac{1}{2}\int_X|\nabla f_n|^2d\mu : (f_n)\subset Lip(X) \text{   and   } f_n\to f \text{  in   } L^2(\mu)  
  \}.
\]
In particular, for $f \in \mathcal{D}(Ch) \coloneqq \{ f\in L^2(\mu) : Ch(f)< +\infty  \}$, the Cheeger energy can be represented as 
\[
Ch(f)=\frac{1}{2}\int_X |\nabla f|^2_w d\mu,
\] 
where $|\nabla f|_w$ is the weak upper gradient of $f$, which is the unique element of minimal norm of the closed convex set $S_f=\{ g \in L^2(\mu) :\text{there exists } (f_n)_{n} \subset Lip(X), f_n \to f, |\nabla f_n|  \rightharpoonup  h \geq g \text{   in  } L^2(\mu)
  \} $. 
For more details we refer the reader to  the papers by Ambrosio, Gigli and Savare \cite{AGS-calculus,AGS,AGS-BE}.
 Also, $Ch$ is a $2$-homogeneous, lower semi-continuous, convex functional on $L^2(\mu)$, whose proper domain $\mathcal{D}(Ch)$ is a dense linear
subspace of $L^2(\mu)$, that admits an $L^2$-gradient flow which is a continuous semi-group of
contractions $(P_t)_{t\geq0}$ in $L^2(\mu)$, whose continuous trajectories $t \mapsto P_tf=f_t$, for $f\in L^2(\mu)$,
are locally Lipschitz curves from $(0,\infty)$ with values into $L^2(\mu)$\footnote{In many places the usual notation for this semi-group is $H_t$ instead of $P_t$}. For these and more on Cheeger energy see \cite{cheeger,AGS-calculus}).
Finally $\mathcal{D}(Ch)$ is a Banach space equipped with the norm 
$||f||^2_{1,2}=||f||^2_{L^2(\mu)}+2Ch(f)$.

\begin{defn}[$RCD$-spaces \cite{AGS}]
Let $(X,d,\mu)$ be a probability length space as above. 
 \begin{itemize} 
\item[(i)] 
We say that $(X,d,\mu)$ is infinitesimally Hilbertian if the Cheeger energy is quadratic, or equivalently $(\mathcal{D}(Ch),\|\cdot\|_{1,2})$ is a Hilbert space.
\item[(ii)]
We say that $(X,d,\mu)$ is an $RCD(\rho,\infty)$ space if it is a $CD(\rho,\infty)$ space and also infinitisimally Hilbertian. \par\vspace{0.6em}
\end{itemize}
\end{defn}

On $RCD(\rho,\infty)$-spaces 
the natural notion of Carré du champs is given by $\Gamma(f):= |\nabla f|_w^2$ for $f$ in the domain of Dirichlet form $\mathcal{E}=2Ch$. 

Denote by $L$ the generator of $P_t$. 
By its definition and the fact that the Cheeger energy is quadratic we infer that $Lip(X)\cap L^\infty(\mu)$ is dense in $\mathcal{D}(Ch)$ and so is $\mathcal{D}(L)$, the domain of $L$. 
As for the Markov triple, $L$ might be alternatively constructed through the integration by part formula $
\int_X(-Lf)g d\mu := \int_X\Gamma(f,g)d\mu$ for all $f,g \in \mathcal{D}(Ch)$. It is plain that $L$ is an unbounded self-adjoint non-negative operator.
Set $\mathcal{D}_V:= \{
f \in \mathcal{D}(Ch) : Lf \in \mathcal{D}(Ch) \}$ and $\mathcal{D}_{\infty}:= \{f \in  \mathcal{D}(Ch) \cap L^\infty(\mu) :  Lf \in L^\infty( \mu)\}$. Then, the "weak" notion of $\Gamma_2$ operator is the following multilinear form $\Gamma_2:\mathcal{D}_V\times \mathcal{D}_V \times \mathcal{D}_{\infty} \to \mathbb{R} $ 
\[
\Gamma_2[f,g;\phi]:= \frac{1}{2}\int_X[\Gamma(f,g) L\phi - (\Gamma(f, Lg) + \Gamma(g, Lf)) \phi] d\mu .
\]
For simplicity set $\Gamma_2[f;\phi]=\Gamma_2[f,f;\phi]$	.

The connection between $RCD$-spaces and Bakry-Emery type calculus is made clear by the following 
theorem by Ambrosio, Gigli and Savar\'e \cite{AGS}.

\begin{thm}[Ambrosio-Gigli-Savare] If $(X,d,\mu)$ is an $RCD(\rho,\infty)$ space then it also satisfies 
\begin{equation} \label{eq:gamma2-RCD}
\Gamma_2[f;\phi] \geq  \rho \int_X\Gamma(f)\phi d\mu,
\end{equation}
for all $f\in \mathcal{D}_V$ and $\phi \in \mathcal{D}_\infty$ with $\phi \geq 0$.
\end{thm}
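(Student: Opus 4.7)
The plan is to follow the classical chain of implications due to Ambrosio, Gigli and Savaré, namely $CD(\rho,\infty) \implies EVI_\rho \implies W_2\text{-contraction} \implies \text{gradient estimate} \implies \text{weak } BE(\rho,\infty)$, with the last step furnishing precisely the integrated inequality in \eqref{eq:gamma2-RCD}. The infinitesimal Hilbertianity is used in a crucial way since without it the heat flow is nonlinear and these implications break down (this is also what distinguishes $RCD$ from $CD$).

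The first stage is to identify the two gradient flows. Using that $Ch$ is quadratic one shows that the $L^2$-gradient flow of $Ch$ (giving $P_t$) coincides with the Wasserstein gradient flow of $\Ent_\mu$. Combining this identification with the $\rho$-convexity of $\Ent_\mu$ along $W_2$-geodesics provided by the $CD(\rho,\infty)$ assumption, one upgrades the flow to satisfy the evolution variational inequality
\begin{equation*}
\frac{d}{dt}\frac{W_2^2(\mu_t,\sigma)}{2} + \frac{\rho}{2}W_2^2(\mu_t,\sigma) + \Ent_\mu(\mu_t) \leq \Ent_\mu(\sigma), \qquad \sigma \in \mathcal{D}(\Ent_\mu),
\end{equation*}
for any trajectory $\mu_t = (P_t f) \mu$ with $f\mu \in \mathcal{D}(\Ent_\mu)$. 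A standard Gronwall-type argument on this EVI yields the $W_2$-contraction $W_2(P_t\nu_0, P_t\nu_1) \leq e^{-\rho t} W_2(\nu_0,\nu_1)$.

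Next, one invokes Kuwada's duality theorem, which in the metric measure setting asserts the equivalence between the $W_2$-contraction of the heat flow and the pointwise gradient estimate $|\nabla P_t f|_w^2 \leq e^{-2\rho t} P_t(|\nabla f|_w^2)$ $\mu$-a.e., for all $f \in Lip(X) \cap L^2(\mu)$ (or more generally, $f \in \mathcal{D}(Ch)$). Using the identification $\Gamma(f) = |\nabla f|_w^2$ this reads $\Gamma(P_t f) \leq e^{-2\rho t} P_t \Gamma(f)$.

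Finally, to extract the weak Bakry-Emery inequality from the gradient estimate, fix $f \in \mathcal{D}_V$ and $\phi \in \mathcal{D}_\infty$ with $\phi \geq 0$, and consider the interpolation $s \mapsto \int_X \Gamma(P_s f)\, P_{t-s}\phi \, d\mu$ on $[0,t]$. A direct computation using the integration by parts formula defining $L$, together with self-adjointness of $P_t$, produces a term recognizable as $2 \Gamma_2[P_s f; P_{t-s}\phi]$ (and its $\rho$ counterpart after one also differentiates the exponential factor $e^{-2\rho s}$). The gradient estimate applied after the interpolation, combined with passage to the limit $t \downarrow 0$ through Bochner-type approximation, yields
\begin{equation*}
\Gamma_2[f;\phi] \geq \rho \int_X \Gamma(f)\phi \, d\mu,
\end{equation*}
which is exactly \eqref{eq:gamma2-RCD}. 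The main obstacle, as expected, is entirely in the non-smooth functional-analytic bookkeeping: justifying differentiability in $s$ of the interpolation, controlling the weak convergence of $|\nabla f_n|$ to $|\nabla f|_w$ along Lipschitz approximations, and ensuring all relevant quantities live in the correct domains so that the integration by parts underlying the definition of $\Gamma_2[\cdot;\phi]$ applies. The AGS calculus package is tailored precisely to handle these technicalities.
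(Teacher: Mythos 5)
The paper does not prove this statement; it is imported verbatim as a cited theorem of Ambrosio, Gigli and Savar\'e \cite{AGS}, so there is no in-paper proof to compare your argument against. Your sketch does correctly reproduce the standard AGS route: infinitesimal Hilbertianity forces linearity of the heat flow and underpins the identification of the $L^2$-gradient flow of $Ch$ with the $W_2$-gradient flow of $\Ent_\mu$; combined with the $\rho$-geodesic convexity of the entropy coming from $CD(\rho,\infty)$ this yields the $EVI_\rho$ property and hence $W_2$-exponential contraction; Kuwada duality converts the contraction to the pointwise gradient estimate $\Gamma(P_t f)\le e^{-2\rho t}P_t\Gamma(f)$ $\mu$-a.e.; and a linearization at $t=0$ against nonnegative $\phi\in\mathcal{D}_\infty$ gives the weak $\Gamma_2$ inequality. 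Two small observations. First, the final step is cleaner than your description: integrate the gradient estimate against $\phi$, move $P_t$ onto $\phi$ by self-adjointness so that the inequality becomes $\int\Gamma(P_tf)\phi\,d\mu\le e^{-2\rho t}\int\Gamma(f)P_t\phi\,d\mu$ with equality at $t=0$, and take the right derivative at $t=0$; one computes the left side to $2\int\Gamma(f,Lf)\phi\,d\mu$ and the right to $-2\rho\int\Gamma(f)\phi\,d\mu+\int\Gamma(f)L\phi\,d\mu$, and rearranging recovers $\Gamma_2[f;\phi]\ge\rho\int\Gamma(f)\phi\,d\mu$ directly. The two-sided interpolation $s\mapsto\int\Gamma(P_sf)\,P_{t-s}\phi\,d\mu$ that you invoke is the tool used for the converse implication (weak $BE\Rightarrow$ gradient estimate), not this direction. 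Second, each arrow in your chain is itself a substantial theorem in the AGS corpus (most notably the gradient-flow identification), so the proposal should be read as a correct roadmap into \cite{AGS-calculus,AGS,AGS-BE} rather than as a self-contained argument.
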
 

For some class of spaces the converse direction was also shown to be true \cite{AGS-BE}.
We denote Condition \eqref{eq:gamma2-RCD}, with a slight abuse of notation, as $BE(\rho,\infty)$, it corresponds to a weak analogue of the usual Bakry-Emery condition depicted in the previous subsection.  
It was shown in \cite{AGS-BE} that the weak $BE(\rho,\infty)$ is equivalent to 
\[
\Gamma(P_tf)\leq e^{-2\rho t}P_t(\Gamma(f)) \text{      $\mu$-almost everywhere}
\]
for all $t>0$ and $f\in \mathcal{D}(Ch)$.

When dealing with $RCD(\rho,\infty)$ spaces it is proven by Ambrosio, Gigli and Savare \cite{AGS} that if one looks the Heat flow $P_t$ as gradient flow of the Cheeger energy (as discussed above), and also as gradient flow of the entropy functional in the Wasserstein space (like for the Sturm-Lott-Villani theory), these two points of view coincide in their common domain. From this in particular we get that $P_t$ is a self-adjoint, mass preserving, Markov semi-group that is irreducible and ergodic (and thus for each $f\in L^2(\mu)$ we have $P_tf \to \int_Xfd\mu$ in $L^2(\mu)$). Moreover, by \cite[Theorem 6.1 (ii)]{AGS}, we can also choose for any $f\in L^\infty (\mu)$ and $t>0$ a continuous representative  of $P_tf$, which we denote again by $P_tf$.

\section{Curvature as a cornerstone of semi-group interpolation} \label{sec:commutation}

The aim of this section is to prove Theorem 
\ref{thm:local-commutation-intro} and some generalisation together with an integrated version of it. Recall from Section \ref{sec:architecture} the definition of a  Markov triple $(E,\mu,\Gamma)$ and the different notations.


In what follows $I$ denotes an interval possibly unbounded (not necessarily open or closed). 
When $I=[a,b]$ is a closed interval, there are some technical issues at $a,b$ that we may ignore (see Remark \ref{rm:andreas}): one possibility is to deal first with functions $f$ that take values in $[a+\varepsilon,b-\varepsilon]$, for some suitable small $\varepsilon >0$, and then proceed by approximation.

\subsection{Local inequalities}
Throughout the entirety of this section, $M$ is again a function of $2$ variables $(x,y) \mapsto M(x,y)$ with domain $I \times \mathbb{R}_+$.  

We state a slight generalization of theorem \ref{thm:local-commutation-intro} below for the convenience of the reader.

\begin{thm} \label{thm:local-commutation}
Let $(E,\mu,\Gamma)$ be a Markov triple.
Let $\rho \in \mathbb{R}$ and $M \colon I \times \mathbb{R}^+$ be of class $\mathcal{C}^2$ with $M_y \geq 0$. For $t , \alpha \geq 0$, define
$g_\alpha(t) \coloneqq \frac{1-e^{-2\rho t}}{\rho} + \alpha e^{-2\rho t}$,  ($= 2t + \alpha e^{-2\rho t}$ when $\rho=0$).\\
$(i)$ Assume that for all $f \in \mathcal{A}$, it holds $\Gamma_2(f) \geq \rho \Gamma(f)$ and that the matrix
$$
A = \left(
\begin{array}{ccccc}
M_{xx} + 2 M_{y} & M_{xy} \\
M_{xy} & M_{yy}+\frac{M_{y}}{2y} \\
\end{array}
\right)
$$
is positive semi-definite. Then, for any $t \geq  0$, the map
$$
s \in [0,t] \mapsto H(s) = P_s M(P_{t-s}f,g_\alpha(s) \Gamma(P_{t-s}f))
$$
is non-decreasing. In particular
\begin{equation} \label{eq:main1}
    M(P_t f,\alpha \Gamma(P_{t}f))
    \leq 
    P_t M(f,g_\alpha(t) \Gamma(f)) \qquad \forall f \in \mathcal{A} .
\end{equation}
$(ii)$
Assume that there exist $t_o >0$ and $B \geq 0$ such that \eqref{eq:main1} holds for any $\alpha \geq B$ and all $t \in [0,t_o]$. Assume also that 
$M_y$ is not identically $0$. Then  for all $f \in \mathcal{A}$ it holds
$$
\Gamma_2(f) \geq \rho \Gamma(f) .
$$
\end{thm}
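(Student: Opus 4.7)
My strategy is the classical Bakry--Émery semigroup interpolation. For $(i)$, I work with the path
\[H(s) := P_s M\bigl(P_{t-s}f,\, g_\alpha(s)\Gamma(P_{t-s}f)\bigr), \qquad s\in[0,t],\]
given in the statement and show $H'(s)\geq 0$, which yields $H(0)\leq H(t)$, i.e.\ \eqref{eq:main1}. For $(ii)$, I differentiate \eqref{eq:main1} at $t=0$ to obtain an infinitesimal inequality, and extract $\Gamma_2\geq\rho\Gamma$ from it by a judicious choice of test function.

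For $(i)$, writing $u = P_{t-s}f$, $v = g_\alpha(s)\Gamma(u)$, $N(s) = M(u,v)$, the identity $\partial_s P_s = LP_s$ gives $H'(s) = P_s[LN + \partial_s N]$. Expanding $LN$ by the diffusion property,
\[LN = M_x Lu + M_y Lv + M_{xx}\Gamma(u) + 2M_{xy}\Gamma(u,v) + M_{yy}\Gamma(v),\]
and using $\partial_s u = -Lu$ together with $Lv = 2g_\alpha[\Gamma_2(u)+\Gamma(u,Lu)]$ (from $L\Gamma(u) = 2\Gamma_2(u)+2\Gamma(u,Lu)$), the $Lu$-terms cancel. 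The elementary identity $2\rho g_\alpha(s) + g_\alpha'(s) = 2$, together with the enhanced Bakry--Émery condition \eqref{eq:enhanced} (equivalent to $\Gamma_2\geq\rho\Gamma$), then yields
\[LN + \partial_s N \geq (M_{xx}+2M_y)\Gamma(u) + 2g_\alpha M_{xy}\Gamma(u,\Gamma(u)) + g_\alpha^2\left(M_{yy}+\frac{M_y}{2v}\right)\Gamma(\Gamma(u)),\]
in which, with $y = v$, the matrix $A$ of the statement appears naturally. By Cauchy--Schwarz $|\Gamma(u,\Gamma(u))|\leq\sqrt{\Gamma(u)\Gamma(\Gamma(u))}$, the right-hand side is bounded below by the quadratic form associated with $A$ evaluated at $(\sqrt{\Gamma(u)},\,\pm g_\alpha\sqrt{\Gamma(\Gamma(u))})$, with the sign chosen to make the $M_{xy}$ cross term maximally unfavourable; this is $\geq 0$ by the PSD hypothesis on $A$, so $H'\geq 0$.

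For $(ii)$, set $\Phi(t) = M(P_tf, \alpha\Gamma(P_tf))$ and $\Psi(t) = P_tM(f, g_\alpha(t)\Gamma(f))$. Since $\Phi(0)=\Psi(0)$, the assumption $\Phi\leq\Psi$ on $[0,t_o]$ implies $\Psi'(0)-\Phi'(0)\geq 0$; a direct computation at $t=0$ (analogous to the one in part $(i)$, but with no semigroup dressing), using $g_\alpha'(0) = 2-2\rho\alpha$ and $L\Gamma(f)=2\Gamma_2(f)+2\Gamma(f,Lf)$, gives
\[\Psi'(0)-\Phi'(0) = 2\alpha M_y[\Gamma_2(f)-\rho\Gamma(f)] + (M_{xx}+2M_y)\Gamma(f) + 2\alpha M_{xy}\Gamma(f,\Gamma(f)) + \alpha^2 M_{yy}\Gamma(\Gamma(f)) \geq 0,\]
with all derivatives of $M$ evaluated at $(f,\alpha\Gamma(f))$. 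Since $M_y\not\equiv 0$, fix $(x^*,y^*)$ with $M_y(x^*,y^*)>0$. Given $f_0\in\mathcal{A}$ and $x_0\in E$ with $\Gamma(f_0)(x_0)>0$, I construct, by shifting and rescaling $f_0$ after subtracting its local quadratic part at $x_0$ (the familiar recipe on smooth manifolds and in $\R^n$, relying in general on the usual richness/abundance properties of $\mathcal{A}$), a test function $\tilde f_\alpha\in\mathcal{A}$ with $\tilde f_\alpha(x_0)=x^*$, $\alpha\Gamma(\tilde f_\alpha)(x_0)=y^*$, vanishing second-order $\Gamma$-quantities $\Gamma(\tilde f_\alpha,\Gamma(\tilde f_\alpha))(x_0)=\Gamma(\Gamma(\tilde f_\alpha))(x_0)=0$, and a curvature ratio $A_0 := \Gamma_2(\tilde f_\alpha)(x_0)/\Gamma(\tilde f_\alpha)(x_0)$ independent of $\alpha$ and bounded above by $\Gamma_2(f_0)(x_0)/\Gamma(f_0)(x_0)$. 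The displayed inequality for $\tilde f_\alpha$, divided by $\Gamma(\tilde f_\alpha)(x_0)>0$, reduces to
\[2\alpha M_y(x^*,y^*)(A_0 - \rho) + (M_{xx}+2M_y)(x^*,y^*)\geq 0, \qquad \alpha\geq B;\]
letting $\alpha\to\infty$ forces $A_0\geq\rho$, and hence $\Gamma_2(f_0)(x_0)\geq\rho\Gamma(f_0)(x_0)$.

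The principal technical point lies in part $(i)$: since $\Gamma(u,\Gamma(u))$ can carry either sign, it is essential to apply the PSD hypothesis on $A$ against the adverse Cauchy--Schwarz bound, and the $M_y/(2y)$ in the $(2,2)$ entry of $A$ is precisely what absorbs the $\Gamma(\Gamma(u))/(4\Gamma(u))$ contribution coming from the enhanced $\Gamma_2$-inequality (so that the matrix $A$, rather than a strictly larger one, suffices). In part $(ii)$, the only delicate ingredient is the test-function construction — straightforward on smooth manifolds and $\R^n$ but tacitly relying on the standard richness hypothesis for $\mathcal{A}$ in an abstract Markov triple.
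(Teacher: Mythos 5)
Part $(i)$ of your argument is, up to notation, the paper's own proof: you differentiate $H$, expand $LN+\partial_sN$ via the diffusion property, invoke the enhanced condition \eqref{eq:enhanced} together with the identity $g_\alpha'+2\rho g_\alpha=2$ to absorb the $\Gamma_2$ term, and close with Cauchy--Schwarz and positive semi-definiteness of $A$; your remark about choosing the unfavourable sign for the cross term is exactly the paper's observation that replacing $M_{xy}$ by $-|M_{xy}|$ leaves the trace and determinant of $A$ unchanged. Nothing to flag there.

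Part $(ii)$ is where you diverge, and where the gap lies. The paper applies the infinitesimal inequality at $t=0$ to $f=x_o+\varepsilon g$ for an arbitrary non-constant $g\in\mathcal{A}$; the bilinearity and homogeneity of $\Gamma$ and $\Gamma_2$ factor out powers of $\varepsilon$, and sending $\alpha\to\infty$ while pinning $\alpha\varepsilon^2\Gamma(g)$ (so $\varepsilon\to 0$) produces $\Gamma_2(g)\geq\rho\Gamma(g)$ using nothing beyond the algebraic axioms of a Markov triple. You instead propose to build a bespoke test function $\tilde f_\alpha$ with $\Gamma\bigl(\tilde f_\alpha,\Gamma(\tilde f_\alpha)\bigr)(x_0)=\Gamma\bigl(\Gamma(\tilde f_\alpha)\bigr)(x_0)=0$ and to bound the curvature ratio via a Bochner-type decomposition $\Gamma_2=\mathrm{Ric}(\nabla\cdot,\nabla\cdot)+\|\mathrm{Hess}(\cdot)\|^2$. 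Both ingredients --- the existence in $\mathcal{A}$ of a function with prescribed value, prescribed gradient, and vanishing Hessian at a chosen point, and the Ricci/Hessian splitting of $\Gamma_2$ --- are smooth-manifold facts that the abstract Markov triple axioms (closure under smooth compositions vanishing at $0$, density in $L^p$) simply do not supply, and you acknowledge as much. Since Theorem \ref{thm:local-commutation} is stated for an arbitrary Markov triple, this is a real gap: your argument proves $(ii)$ only on $\mathbb{R}^n$ or a smooth manifold, not in the stated generality. The paper's $f=x_o+\varepsilon g$ scaling is the standard way to avoid any such richness hypothesis, and substituting it for your test-function construction closes the gap while also being shorter.
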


\begin{rem}
By ergodicity, if $\rho >0$ in the limit $t \to \infty$ in  \eqref{eq:main1} one obtains 
$$
M \left( \int fd\mu,0 \right) 
\leq
\int M\left(f,\frac{\Gamma(f)}{\rho}\right)d\mu
$$
therefore recovering the statement in Euclidean space of \cite{ivanisvili-volberg} under formally distinct assumptions.  In \cite{ivanisvili-volberg} the authors consider instead  the matrix
$$
A' = \left(
\begin{array}{ccccc}
M_{xx} + \frac{M_{y}}{y} & M_{xy} \\
M_{xy} & M_{yy} \\
\end{array}
\right) .
$$
They show that if $A'$ is positive definite then
the map
$s \in [0,t] \mapsto P_s M(P_{t-s}f,\sqrt{\Gamma(P_{t-s}f)/\rho})$ is non-decreasing. It is not difficult to see that $A$ is positive definite if and only if $A'$ is positive definite (after changing variables $(x,y)$ to $(x,y^2)$).


\end{rem}

\begin{proof}
Set $g=P_{t-s}f$. One may perform the following calculations 
\begin{equation*}
    \begin{split}
        H'(s)&=P_{s}[LM-M_{x}Lg+M_{y}g_\alpha'(s)\Gamma(g)+M_{y}g_\alpha(s)\frac{d}{ds}\Gamma(g)]\\
        &=P_{s}[LM-M_{x}Lg+M_{y}g_\alpha'(s)\Gamma(g)-2M_{y}g_\alpha(s)\Gamma(g,Lg)]\\
    \end{split}
\end{equation*}
By definition, one has 
\begin{equation} \label{eq:gamma2-gamma}
2\Gamma(g,Lg)=L\Gamma(g)-2\Gamma_{2}(g) .
\end{equation}
On the other hand , by the diffusion property one has 
$$
LM=M_{x}Lg+M_{y}g_\alpha(s)L\Gamma(g)+M_{xx}\Gamma(g)+M_{yy}\Gamma(g_\alpha(s)\Gamma(g))+2M_{xy}\Gamma(g,g_\alpha(s)\Gamma(g))
$$
This yields 
\begin{equation*}
    \begin{split}
        H'(s)&=P_{s} \left[M_{xx}\Gamma(g)+M_{yy}\Gamma(g_\alpha(s)\Gamma(g))+2M_{xy}\Gamma(g,g_\alpha(s)\Gamma(g))+M_{y}[g_\alpha'(s)\Gamma(g)+2g_\alpha(s)\Gamma_{2}(g)] \right] .
    \end{split}
\end{equation*}
Observe that $g_\alpha'(s)+2g_\alpha(s)\rho=2$. By \eqref{eq:enhanced}, it holds
\begin{equation*}
    \begin{split}
        H'(s)&\geq P_{s}\left[M_{xx}\Gamma(g)+2M_{y}\Gamma(g)+2M_{xy}\Gamma(g,g_\alpha(s)\Gamma(g))+M_{yy}\Gamma(g_\alpha(s)\Gamma(g))+M_{y}\frac{\Gamma(g_\alpha(s)\Gamma(g))}{2g_\alpha(s)\Gamma(g)}\right]\\
        &\geq P_{s} \left[M_{xx}\Gamma(g)+2M_{y}\Gamma(g)-2|M_{xy}|g_\alpha(s)\Gamma(g,\Gamma(g))+M_{yy}g_\alpha(s)^2\Gamma(\Gamma(g))+M_{y}g_\alpha(s)\frac{\Gamma(\Gamma(g))}{2\Gamma(g)}\right].
    \end{split}
\end{equation*}
Since $\Gamma$ is a bilinear form it satisfies the Cauchy-Schwarz inequality $\Gamma(u,v)\geq-\sqrt{\Gamma(u)}\sqrt{\Gamma(v)}$. Thus 
$$
H'(s)
\geq v \left(
\begin{array}{ccccc}
M_{xx} + 2 M_{y} & -|M_{xy}| \\
-|M_{xy}| & M_{yy}+\frac{M_{y}}{2y} \\
\end{array}
\right)
v^T
$$
with (2-dimensional) vector $v:=(\sqrt{\Gamma(g)} , g_\alpha(s)\sqrt{\Gamma(\Gamma(g))})$.
The right hand side is positive since $A$ is positive (note that $A$ is positive semi-definite if and only if the matrix 
$
\left(
\begin{array}{ccccc}
M_{xx} + 2 M_{y} & -|M_{xy}| \\
-|M_{xy}| & M_{yy}+\frac{M_{y}}{2y} \\
\end{array}
\right)
$
is positive semi-definite, since they have same trace and same determinant). Therefore $H$ is non-increasing which is the desired conclusion of Item $(i)$.

To prove $(ii)$ observe that \eqref{eq:main1} is an equality at $t=0$ meaning that the derivative (in $t$) of the left hand side is less or equal than the derivative of the right hand side at $t=0$ which precisely means, thanks to the diffusion property that we used above, setting $M_x$ for $M_x(f,\alpha\Gamma(f))$ and similarly for $M_y, M_{xx}$,...,
\begin{align*}
M_xLf + & 2\alpha M_y\Gamma(f,Lf) 
 \leq 
LM +
M_y g_\alpha'(0)\Gamma(f) \\
& =
M_{x}Lf+ \alpha M_{y}L\Gamma(f)+M_{xx}\Gamma(f)+M_{yy}\Gamma(\alpha\Gamma(g))+2M_{xy}\Gamma(g,\alpha\Gamma(g))
+M_y g_\alpha'(0)\Gamma(f) .
\end{align*}
Using the homogeneity of $\Gamma$ and  \eqref{eq:gamma2-gamma}, after simplification, it holds (since $g_\alpha'(0)=2-2\alpha\rho$)
$$
-2\alpha M_y \Gamma_2(f) 
\leq 
M_{xx}\Gamma(f)+\alpha^2M_{yy}\Gamma(\Gamma(f))
+ 2\alpha M_{xy}\Gamma(f,\Gamma(f))
+2(1-\alpha\rho) M_y \Gamma(f) .
$$
Therefore, for any $\alpha > 0$ it holds 
$$
M_y \Gamma_2(f) \geq \frac{\alpha \rho -1}{\alpha}M_y \Gamma(f) 
- R
\qquad \mbox{with } R \coloneqq 
\frac{M_{xx}}{2\alpha}\Gamma(f)
+ \frac{\alpha}{2} M_{yy}\Gamma(\Gamma(f))
+M_{xy}\Gamma(f,\Gamma(f)) .
$$
Our aim is to send $\alpha$ to infinity.
Let $(x_o,y_o)$ be such that $M_y(x_o,y_o) \neq 0$ and consider a non constant function $g \in \mathcal{A}$. Applying the latter to $f=x_o+\varepsilon g$, by homogeneity, one obtains
$$
M_y(x_o+\varepsilon g, \alpha\varepsilon^2 \Gamma(g)) \varepsilon^2 \Gamma_2(g) \geq \frac{\alpha \rho -1}{\alpha} M_y(x_o+\varepsilon g, \alpha\varepsilon^2 \Gamma(g)) \varepsilon^2\Gamma(g) 
- R
$$
with
\begin{align*}
R & = \frac{M_{xx}(x_o+\varepsilon g, \alpha\varepsilon^2 \Gamma(g))}{2\alpha} \varepsilon^2\Gamma(g)+ \frac{\alpha}{2} M_{yy}(x_o+\varepsilon g, \alpha\varepsilon^2 \Gamma(g)) \varepsilon^4\Gamma(\Gamma(g)) \\
& \quad 
+2M_{xy}(x_o+\varepsilon g, \alpha\varepsilon^2 \Gamma(g))\varepsilon^3\Gamma(g,\Gamma(g)) .
\end{align*}
Simplifying by $\varepsilon^2$ and taking $\alpha \varepsilon^2 \Gamma(g) =y_o$, by continuity, we obtain, in the limit $\alpha \to \infty$,
$$
M_y(x_o,y_o) \Gamma_2(g) \geq \rho M_y(x_o,y_o) \Gamma(g)
$$
which leads to the desired conclusion $\Gamma_2 \geq \rho \Gamma$. This ends the proof of the theorem.
\end{proof}

Theorem \eqref{thm:local-commutation} has a counterpart in term of reverse inequalities that we now state essentially without proof.

\begin{thm} \label{thm:reverse-local-commutation}
Let $(E,\mu,\Gamma)$ be a Markov triple.
Let $\rho \in \mathbb{R}$ and $M \colon I \times \mathbb{R}^+$ be of class $\mathcal{C}^2$ with $M_y \geq 0$ on its domain. For $t , \alpha \geq 0$, define
$h_\alpha(s) \coloneqq \frac{e^{2\rho (t-s)}-1}{\rho} + \alpha e^{2\rho (t-s)}$,  ($= 2(t-s) + \alpha e^{2\rho (t-s)}$ when $\rho=0$).\\
$(i)$ Assume that for all $f \in \mathcal{A}$, it holds $\Gamma_2(f) \geq \rho \Gamma(f)$ and that the matrix
$$
B = \left(
\begin{array}{ccccc}
M_{xx} - 2 M_{y} & M_{xy} \\
M_{xy} & M_{yy}+\frac{M_{y}}{2y} \\
\end{array}
\right)
$$
is positive semi-definite. Then, for any $t \geq  0$, the map
$$
s \in [0,t] \mapsto H(s) = P_s M(P_{t-s}f,h_\alpha(s) \Gamma(P_{t-s}f))
$$
is non-decreasing. In particular
\begin{equation} \label{eq:reverse-main1}
    M \left(P_t f, h_\alpha(0) \Gamma(P_{t}f) \right)
    \leq 
    P_t M(f, \alpha \Gamma(f)) \qquad \forall f \in \mathcal{A} .
\end{equation}
$(ii)$
Assume that there exist $t_o >0$ and $C \geq 0$ such that \eqref{eq:reverse-main1} holds for any $\alpha \geq C$ and all $t \in [0,t_o]$. Assume also that 
$M_y$ is not identically $0$. Then  for all $f \in \mathcal{A}$ it holds
$$
\Gamma_2(f) \geq \rho \Gamma(f) .
$$
\end{thm}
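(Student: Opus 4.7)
The plan is to mirror the proof of Theorem \ref{thm:local-commutation} essentially verbatim, keeping the same two-parameter interpolation scheme but exploiting the analogous differential identity satisfied by $h_\alpha$. For part (i), I would show that the map $H(s) = P_s M(P_{t-s}f,\, h_\alpha(s)\Gamma(P_{t-s}f))$ is non-decreasing on $[0,t]$. Since $h_\alpha(t) = \alpha$, we have $H(t) = P_t M(f,\alpha\Gamma(f))$ and $H(0) = M(P_t f, h_\alpha(0)\Gamma(P_t f))$, so $H(0) \leq H(t)$ is exactly \eqref{eq:reverse-main1}.

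Writing $g = P_{t-s}f$ and differentiating under $P_s$ using the diffusion property together with $\frac{d}{ds}\Gamma(g) = -2\Gamma(g,Lg)$, the computation of the forward case carries over and yields
\[
H'(s) = P_s\left[M_{xx}\Gamma(g) + h_\alpha(s)^2 M_{yy}\Gamma(\Gamma(g)) + 2 h_\alpha(s) M_{xy}\Gamma(g,\Gamma(g)) + M_y h_\alpha'(s)\Gamma(g) + 2 M_y h_\alpha(s)\Gamma_2(g)\right].
\]
The only substantive difference from the forward case is the identity $h_\alpha'(s) + 2\rho h_\alpha(s) = -2$, whose right-hand side has the opposite sign of the corresponding identity $g_\alpha'(s) + 2\rho g_\alpha(s) = 2$. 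Rewriting the last two terms as
\[
M_y h_\alpha'(s)\Gamma(g) + 2 M_y h_\alpha(s)\Gamma_2(g) = -2 M_y \Gamma(g) + 2 h_\alpha(s) M_y \bigl[\Gamma_2(g) - \rho\Gamma(g)\bigr]
\]
thus produces the $-2M_y$ entry characteristic of matrix $B$, precisely where the forward computation produced $+2M_y$ in $A$.

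From here the argument closes exactly as in Theorem \ref{thm:local-commutation}: bound $\Gamma_2(g) - \rho\Gamma(g)$ below by $\Gamma(\Gamma(g))/(4\Gamma(g))$ using the enhanced condition \eqref{eq:enhanced}, apply Cauchy--Schwarz in the form $\Gamma(g,\Gamma(g)) \geq -\sqrt{\Gamma(g)\Gamma(\Gamma(g))}$, and recognise the resulting integrand as the quadratic form in the vector $v = (\sqrt{\Gamma(g)},\, h_\alpha(s)\sqrt{\Gamma(\Gamma(g))})$ associated with the matrix obtained from $B$ by replacing $M_{xy}$ with $-|M_{xy}|$; since this transformation preserves both trace and determinant, positivity of this form is equivalent to positivity of $B$, so $H'(s) \geq 0$. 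Part (ii) then proceeds verbatim as the converse in Theorem \ref{thm:local-commutation}: both sides of \eqref{eq:reverse-main1} agree at $t=0$, and comparing $t$-derivatives at $0$ (using \eqref{eq:gamma2-gamma} and the diffusion property) yields an inequality of the form $M_y\Gamma_2(f) \geq (\rho + 1/\alpha)M_y\Gamma(f) - R_\alpha$; the scaling substitution $f = x_o + \varepsilon g$ with $\alpha\varepsilon^2\Gamma(g) = y_o$ followed by $\alpha \to \infty$ then isolates $M_y(x_o,y_o)\Gamma_2(g) \geq \rho M_y(x_o,y_o)\Gamma(g)$, whence $\Gamma_2 \geq \rho\Gamma$ since $M_y$ is not identically zero.

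The main obstacle is purely bookkeeping: correctly tracking the sign coming from the ``reverse'' weight $h_\alpha$ (which decreases in $s$ rather than increases) so that the computation lands on the matrix $B$ rather than $A$. All of the analytic ingredients---the enhanced Bakry--Emery inequality, the Cauchy--Schwarz bound on $\Gamma(g,\Gamma(g))$, and the trace/determinant equivalence of the two quadratic forms---operate identically to the forward case, which explains why the theorem admits a proof that is essentially just a sign-tracking variant of the one already given.
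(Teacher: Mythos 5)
The proposal is correct and follows the same route as the paper: the paper itself dispenses with the proof of this theorem by noting it is identical to that of Theorem \ref{thm:local-commutation} once one observes $2\rho h_\alpha(s)+h_\alpha'(s)=-2$, and your write-up simply fills in those elided details (the sign flip producing $-2M_y$ in place of $+2M_y$, the vector $v=(\sqrt{\Gamma(g)},\,h_\alpha(s)\sqrt{\Gamma(\Gamma(g))})$, the trace/determinant equivalence of $B$ and its $-|M_{xy}|$-variant, and the $\alpha\to\infty$ scaling for the converse) accurately.
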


\begin{proof}
The proof is identical that of Theorem \ref{thm:local-commutation} observing that, now, 
$2h_\alpha(s)\rho + h_\alpha'(s)=-2$ (in place of 
$2g_\alpha(s)\rho + g_\alpha'(s)=2$).
Details are left to the reader.
\end{proof}

\begin{rem}
Theorems \ref{thm:local-commutation} and \ref{thm:reverse-local-commutation}
show that the Bakry-Emery condition is equivalent, under mild assumption, to both \eqref{eq:main1} and its ``reverse" \eqref{eq:reverse-main1}. See below for an illustration.
\end{rem}

\begin{rem} \label{rem:perturbation}
We observe that $M$ can be perturbed at no cost both in the conclusions \eqref{eq:main1} and \eqref{eq:reverse-main1} and in the hypotheses by setting $\widetilde M(x,y)=aM(x,y)+bx+c$, with $c>0$.
\end{rem}

\subsection*{Examples}

Theorem \ref{thm:local-commutation} contains many known results. To illustrate this, as a warm up
let us show how the equivalence of $(i), (ii), (iii)$ and $(iv)$ in the introduction can be derived from Theorem \ref{thm:local-commutation}, other more sophisticated examples will follow. 
To obtain Item $(ii)$ it is enough to consider $M(x,y)=\sqrt{y}$ for which the associated matrix $A$ is clearly positive semi-definite, and consider \eqref{eq:main1} in the limit $\alpha \to \infty$;
for Item $(iii)$ consider $M(x,y)=y$ and again send $\alpha$ to infinity; while for the local Poincar\'e inequality $(iv)$ one takes $M(x,y)= -x^2+y$ and $\alpha=0$. The choice 
$M(x,y)=x^2+y$ is such that 
$B= \left(
\begin{array}{ccccc}
0 & 0 \\
0 & 0 \\
\end{array}
\right)$. Therefore, applying Theorem \ref{thm:reverse-local-commutation} we obtain, rearranging the terms, for $\alpha=0$ (that is best possible),  
$$
P_t(f^2) - (P_t f)^2 \geq \frac{e^{2\rho t -1}}{\rho} \Gamma(P_tf) 
$$
that is a reverse form of the local Poincar\'e inequality $(iv)$ presented in the introduction. 

Let us observe that the best possible choices of matrices $A,B$ are those with vanishing determinant. Ivanisvili and Volberg studied in depth the resulting differential equation. we refer the reader to Section 3 in \cite{ivanisvili-volberg} for details and discussion. 

We now turn to more examples of local inequalities derived from Theorem \ref{thm:local-commutation} and Theorem \ref{thm:reverse-local-commutation}.

\subsection*{Local Log-Sobolev}
One may take $M(x,y)=-x\log x+\frac{1}{2}\frac{y}{x}$ where $x$ is to be positive. One calculates 
$$
A=\left(
\begin{array}{ccccc}
\frac{y}{x^{3}} & -\frac{1}{2x^{2}} \\
-\frac{1}{2x^{2}} & \frac{1}{4xy} \\
\end{array}
\right) 
$$ 
which is seen to be positive semi-definite. Thus 
since $M_y \geq 0$, as it is well known (see \textit{e.g.}\ \cite[Theorem 5.4.7]{ane}) Theorem \ref{thm:local-commutation} shows that  the following local log-Sobolev inequality (take $\alpha=0$)
$$
P_{t}\left(f \log f\right)-P_{t}f \log\left(P_{t}f\right)\leq \frac{1-e^{-\rho t}}{2\rho} P_{t}  \left(\frac{\Gamma(f)}{f} \right), \qquad f \in \mathcal{A} 
$$
holds. Note that there is no need for $\rho$ to be positive.

Moreover, the choice $M(x,y)= x\log x +\frac{1}{2}\frac{y}{x}$, where $x >0$,  leads to the matrix $B$ of Theorem \ref{thm:reverse-local-commutation} identical to $A$ above, therefore leading to
the reverse local log-Sobolev inequality
thanks to Theorem \ref{thm:reverse-local-commutation} applied with $\alpha=0$,
$$
P_{t}\left(f \log f\right)-P_{t}f \log\left(P_{t}f\right)
\geq 
\frac{e^{2\rho t}-1}{2\rho}   \frac{\Gamma(P_tf)}{P_tf} , \qquad f \in \mathcal{A} .
$$

\subsection*{Local Gaussian Isoperimetry}
Let $I \colon [0,1] \to \mathbb{R}^+$ be the Gaussian isoperimetric function that can be characterize by the identity
$I''I=-1$ with $I$ symmetric about $1/2$ and $I(0)=I(1)=0$. Consider $M(x,y)=\sqrt{I(x)^{2}+y}$. Using $I''I=-1$, one finds that $$
A=\frac{1}{(I(x)^{2}+y)^{\frac{3}{2}}}\left(
\begin{array}{ccccc}
yI'(x)^{2} & -\frac{I(x)I'(x)}{2}\\
-\frac{I(x)I'(x)}{2} & \frac{I(x)^{2}}{4y} \\
\end{array}
\right) .
$$ 
This matrix is positive semi-definite (in particular its determinant is $0$). 
Thus 
$$
 \sqrt{I(P_{t}f)^{2}+\alpha\Gamma(P_{t}f)}\leq P_{t}\left[\sqrt{I(f)^{2}+g_{\alpha}(t)\Gamma(f)}\right]
 \qquad \forall t, \forall \alpha\geq 0,
$$
is equivalent to the Bakry-Emery condition $\Gamma_2 \geq \rho \Gamma$, as was proved by Bakry and Ledoux \cite{bakry-ledoux}. We refer the reader 
to \cite{ledoux96,barthe-maurey,barthe-ivanisvili}
for related results in connection with both semi-group and Bobkov's functional form of isoperimetry in probability spaces. 

\subsection*{Local Exponential Integrability}

In this subsection, one may find a local version of an interesting result proved in \cite{ivanisvili-russel}. 
Consider the functions (that are well-defined)
$$
k(x)=\frac{x^{2}}{2}+\log\left(\int_{-\infty}^{x}e^{-\frac{s^{2}}{2}}ds\right), 
\qquad F(x)=\int_{0}^{x}e^{k((k')^{-1}(t))}dt
$$ 
and set
$$
M(x,y)=\log(x)+F\left(\frac{\sqrt{y}}{x} \right) .
$$ 
Computations show that the matrix $A$ is equal to the following 
$$
A=\left(
\begin{array}{ccccc}
-\frac{1}{x^{2}}+F'\left[\frac{2\sqrt{y}}{x^{3}}+\frac{1}{x\sqrt{y}}\right]+\frac{y}{x^{4}}F'' & -\frac{F'}{2\sqrt{y}x^{2}}-\frac{F''}{2x^{3}}\\
-\frac{F'}{2\sqrt{y}x^{2}}-\frac{F''}{2x^{3}} & \frac{F''}{4x^{2}y} \\
\end{array}
\right) .
$$  
As done in \cite{ivanisvili-russel} $A$ is positive semi-definite (in particular $F$ was constructed as being the solution of $\mathrm{det}(A)=0$) and one has 
$$
F(s)\leq 10 \frac{e^{\frac{s^{2}}{2}}}{1+s} .
$$ 
Theorem \ref{thm:local-commutation} shows that the Bakry-Emery condition $BE(\rho,\infty)$ is equivalent to 
$$
\log\left(P_{t}f\right) - P_t(\log f)
\leq 
P_{t}\left(
F\left(\frac{\sqrt{g_{\alpha}(t)\Gamma(f)}}{f}\right)\right) - 
F \left(\frac{\sqrt{\alpha\Gamma(P_{t}f)}}{P_{t}f}\right) , \qquad f \in \mathcal{A} \mbox{ positive}, \alpha \geq 0.
$$
For $f=e^h$ and $\alpha=0$, using the upper bound on $F$, this reads as
$$
\log\left(P_{t} e^h \right) - P_t(h)
\leq 
10 P_{t} \left( \frac{e^{\frac{(1-e^{-2\rho t})\Gamma(h)}{2\rho}}}{1+\sqrt{\frac{(1-e^{-2\rho t})\Gamma(h)}{\rho}}} \right)
, \qquad h \in \mathcal{A} .
$$
Sending $t$ towards infinity 
shows that, under $BE(\rho,\infty)$ with $\rho >0$, 
$$
\log \int e^{h}d\mu - \int hd\mu \leq 10 \int \frac{e^{\frac{\Gamma(h)}{2\rho}}}{1+\sqrt{\frac{\Gamma(h)}{\rho}}}d\mu \qquad h \in \mathcal{A} .
$$
This inequality was proved in  \cite{ivanisvili-russel} on the Euclidean space. Theorem \ref{thm:local-commutation} shows that its local counterpart above is actually equivalent to 
Bakry-Emery condition $BE(\rho,\infty)$. 
Other exponential integral inequalities for the measure $\mu$ were considered by Bobkov and Gotze \cite{bobkov-gotze}. We refer the reader to \cite{CMP,CMP20,CMP21,BBDR} for recent results 
in this direction, some of which related to semi-group techniques.

\subsection*{Local Beckner's inequality}
Consider $M(x,y)=-x^p + \frac{p(p-1)}{2}x^{p-2} y$
with $x,y \geq 0$ and $p \in (1,2)$. Then, 
$$
A=\frac{p(p-1)x^{p-4}}{4y}\left(
\begin{array}{ccccc}
2(p-2)(p-3)y^2 & 2(p-2)xy\\
2(p-2)xy & x^2 \\
\end{array}
\right) 
$$  
is easily seen to be positive semi-definite. Theorem \ref{thm:local-commutation} applies and asserts that the
Bakry-Emery condition $BE(\rho,\infty)$ is equivalent to the following local Beckner inequality
$$
P_t (f^p)  - \left( P_t f  \right)^p \leq \frac{p(p-1) }{2} \left( 
g_\alpha(t) P_t \left( f^{p-2} \Gamma(f) \right) - \alpha (P_tf)^{p-2}\Gamma(P_tf)\right)  \qquad f \in \mathcal{A} \mbox{ non negative}, \alpha \geq 0.
$$
To make the connection with Beckner's inequality, in its usual formulation, more apparent, change function and parameter by setting $g=f^\frac{1}{q}$ with $q=\frac{2}{p}$, to obtain, when $\alpha = 0$
$$
P_t (g^2)  - \left( P_t g^q  \right)^\frac{2}{q} 
\leq (2-q) \frac{1-e^{-2\rho t}}{\rho}
P_t \left( \Gamma(g) \right) \qquad g \in \mathcal{A} \mbox{ non negative} .
$$
This local inequality interpolates between the local Poincar\'e inequality (when $q=1$) and the local log-Sobolev inequality stated above (in the limit $q \to 2$). 

One can observe that the determinant of $A$ is not $0$. However, the constant in the right hand side of the latter inequalities cannot be improved (it is known that Beckner's inequality is optimal). Based on this observation, Ivanisvili and Volberg \cite{ivanisvili-volberg-20} carefully modified the the choice of the map $M$ to obtain some strengthening of Beckner's inequality. Theorem \ref{thm:local-commutation} applies also to their strengthening inequality, providing a local inequality of similar type that is another equivalent formulation of the Bakry-Emery condition.

The choice $M(x,y)=x^p + \frac{p(p-1)}{2}x^{p-2} y$
with $x,y \geq 0$ and $p \in (1,2)$, reduces to the matrix $B=A$. Therefore Theorem \ref{thm:reverse-local-commutation}
 applies and leads (after some rearrangement) to
 $$
 P_t(f^p) - (P_tf)^p \geq \frac{p(p-1)}{2}
 \left(h_\alpha(t)(P_tf)^{p-2}\Gamma(P_tf) - \alpha P_t(f^{p-2} \Gamma(f)) \right)\qquad f \in \mathcal{A} \mbox{ non negative}, \alpha \geq 0
 $$
that is another equivalent formulation of the 
Bakry-Emery condition $\Gamma_2 \geq \rho \Gamma$. We are not aware of such a local reverse Beckner inequality in the literature.

Since Beckner's inequalities interpolate between Poincar\'e ($p=2$) and log-Sobolev ($p \to 1$) one should expect $M$ to interpolate between $-x^2+y$
and $-x \log x + \frac{y}{2x}$ that is not the case with our choice $M(x,y)=-x^p + \frac{p(p-1)}{2}x^{p-2} y$. 
However, following the perturbation argument of Remark \ref{rem:perturbation} the choice 
$\widetilde M(x,y) = M(x,y)-x$, which, after dividing by $(p-1)$ converges when $p$ tends to $1$ to $-x \log x + \frac{y}{2x}$ indeed interpolates between $-x^2+x+y$ (that is similar to $-x^2+y$ and corresponds to the local Poincar\'e inequality)
and $-x \log x + \frac{y}{2x}$ .

\subsection{Integrated condition}

It is known (see \textit{e.g.}\ \cite[Proposition 5.5.4]{ane}) that the Poincar\'e inequality 
is equivalent to the integrated version of the Bakry-Emery condition $\int \Gamma_2(f)d\mu \geq \rho \int \Gamma(f) d\mu$. 
In the following statement we generalize this to
a function  $M$ of 2 variables. Recall that $M_x, M_y$ etc. denote the partial derivatives with respect to the first/second variables etc. of $M$.

\begin{thm}\label{thm:2variables}
Let $(E,\mu,\Gamma)$ be a Markov triple, $\rho  \in \mathbb{R}$, $M \colon I \times \mathbb{R}_+ \to \mathbb{R}$ of class $\mathcal{C}^2$ and consider the two matrices
$$
A := \left( \begin{array}{cc}
M_{xx}  + 2\rho M_y & M_{xy} \\ 
M_{xy} & M_{yy}  
\end{array}\right)
\qquad
A' := \left( \begin{array}{cc}
M_{xx}  + 2\rho M_y & M_{xy} \\ 
M_{xy} & M_{yy}  + \frac{M_y}{2y}
\end{array}\right) .
$$
Assume that the associated Markov operator $L$  satisfies either that \\
$(i)$ $A$ is positive definite on $I \times \mathbb{R}_+$ and the following integrated curvature condition holds
\begin{equation} \label{eq:integrated-curvature}
\int M_y(f,\Gamma(f))\Gamma_2(f)d\mu 
\geq 
\rho \int M_y(f,\Gamma(f)) \Gamma(f) d\mu 
\end{equation}
for all $f \in \mathcal{A}$, or\\
$(ii)$ $A'$ is positive definite on $I \times \mathbb{R}_+$ and the following integrated curvature condition holds
\begin{equation} \label{eq:integrated-curvature2}
\int M_y(f,\Gamma(f))\Gamma_2(f)d\mu 
\geq 
\rho \int M_y(f,\Gamma(f)) \Gamma(f) d\mu 
+ \int M_y(f,\Gamma(f)) \frac{\Gamma(\Gamma(f))}{4\Gamma(f)} d\mu
\end{equation}
for all $f \in \mathcal{A}$.\\
Then, For all $f \in \mathcal{A}$, it holds
\begin{equation*} 
M \left( \int f d\mu , 0 \right) \leq \int M(f, \Gamma(f)) d\mu  .
\end{equation*}
\end{thm}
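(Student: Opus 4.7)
The plan is to introduce the functional $\Lambda(t) := \int M(P_t f, \Gamma(P_t f))\, d\mu$ and show that it is non-increasing in $t$, so that $\Lambda(t) \leq \Lambda(0) = \int M(f,\Gamma(f))\, d\mu$ for all $t \geq 0$. Then I would pass to the limit $t \to \infty$ and use ergodicity of the semigroup (so $P_t f \to \int f\, d\mu$ and $\Gamma(P_t f) \to 0$) together with continuity of $M$ to identify $\lim_{t\to \infty} \Lambda(t) = M\bigl(\int f\,d\mu, 0\bigr)$, which yields the claim.

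To differentiate $\Lambda$, I would set $g = P_t f$ and use $\partial_t g = Lg$ together with the standard identity $\partial_t \Gamma(g) = L\Gamma(g) - 2\Gamma_2(g)$ (which follows from \eqref{eq:gamma2-gamma}). Integrating by parts against the invariant measure, $\int h Lk\, d\mu = -\int \Gamma(h,k)\, d\mu$, and expanding $\Gamma(M_x(g,\Gamma(g)),g)$ and $\Gamma(M_y(g,\Gamma(g)),\Gamma(g))$ via the diffusion chain rule (exactly as in the computation preceding Theorem \ref{thm:local-commutation}), this gives
\begin{equation*}
\Lambda'(t) = -\int \bigl[M_{xx}\Gamma(g) + 2M_{xy}\Gamma(g,\Gamma(g)) + M_{yy}\Gamma(\Gamma(g))\bigr]\, d\mu - 2\int M_y \, \Gamma_2(g)\, d\mu.
\end{equation*}

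In case $(i)$, substituting the integrated curvature bound $\int M_y \Gamma_2 \geq \rho \int M_y \Gamma$ into the second term and using the Cauchy-Schwarz-type inequality $|\Gamma(g,\Gamma(g))| \leq \sqrt{\Gamma(g)\Gamma(\Gamma(g))}$ for the cross term bounds the resulting integrand below by a quadratic form in $v = (\sqrt{\Gamma(g)},\sqrt{\Gamma(\Gamma(g))})$ whose matrix has off-diagonal entry $-|M_{xy}|$ but shares trace and determinant with $A$; positive semi-definiteness of $A$ then forces $\Lambda'(t) \leq 0$. In case $(ii)$, the enhanced integrated condition \eqref{eq:integrated-curvature2} absorbs an extra diagonal term $\int M_y \Gamma(\Gamma(g))/(2\Gamma(g))\, d\mu$, so the relevant quadratic form is exactly the one associated to $A'$, and the same Cauchy-Schwarz step concludes.

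The main technical point, as in most arguments of this kind, will be justifying at $t\to\infty$ the interchange of limit and integral giving $\Lambda(t) \to M(\int f\,d\mu, 0)$, as well as the admissibility of $M_x(g,\Gamma(g))$ and $M_y(g,\Gamma(g))$ as test functions for the integration by parts; both are handled by standard truncation/density arguments within the Markov triple framework, once the formal computation above has been carried out.
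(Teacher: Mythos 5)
Your proposal is correct and follows essentially the same approach as the paper: both define the functional $\Lambda(t)=\int M(P_t f,\Gamma(P_t f))\,d\mu$, compute $\Lambda'(t)=-\int\bigl[M_{xx}\Gamma(g)+2M_{xy}\Gamma(g,\Gamma(g))+M_{yy}\Gamma(\Gamma(g))\bigr]d\mu-2\int M_y\Gamma_2(g)\,d\mu$ via integration by parts and the diffusion chain rule, insert the appropriate integrated curvature hypothesis, apply Cauchy--Schwarz to the cross term, and read off $\Lambda'(t)\le 0$ from positive semi-definiteness of the quadratic form with off-diagonal $-|M_{xy}|$, which shares trace and determinant with $A$ (resp.\ $A'$). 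Your remark about passing to the limit $t\to\infty$ and the admissibility of the test functions for integration by parts is a level of care the paper elides, but it does not change the argument.
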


\begin{rem}
 If $M_y \geq 0$ then the integrated condition \eqref{eq:integrated-curvature2} implies \eqref{eq:integrated-curvature} and both are implied by the usual $\Gamma_2$-condition $\Gamma_2 \geq \rho \Gamma$  thanks to \eqref{eq:enhanced} (see also Lemma \ref{lem:gamma2}).
\end{rem}

\begin{proof}
 For $f \in \mathcal{A}$, we will show that the map $t \mapsto H(t) := \int  M(P_t g, \Gamma(P_t g) ) d\mu$
is non-increasing on $[0,\infty)$. The result will follow since $H'(t) \leq 0$ implies
\begin{equation*} 
 0 \geq \int_0^\infty H'(t)dt = M\left(\int g d\mu,0 \right) - \int M(g,\Gamma(g))d\mu 
\end{equation*}
 which is the desired conclusion.

Set for simplicity $u=P_tf$, $v = \Gamma(P_tg)$ and $M_x=M_x(u,v)$ and similarly for $M_y$, $M_{xy}$ etc. Then,
 \begin{align*} 
H'(t)
 & =
 \int M_x Lu + 2M_y  \Gamma(u,Lu) d\mu \nonumber \\
 & =
- \int M_{xx} \Gamma(u) + M_{xy} \Gamma(u,v) - M_y L v + 2M_y \Gamma_2(u) d\mu \nonumber \\
& = - \int M_{xx} \Gamma(u) + 2M_{xy} \Gamma(u,v) + M_{yy} \Gamma( v) + 2M_y \Gamma_2(u) d\mu
 \end{align*}
 where for the second equality we used an integration by parts  and the definition of the operator $\Gamma_2$, namely that $2\Gamma(u,Lu)=L\Gamma(u)-2\Gamma_2(u)=Lv-2\Gamma_2(u)$, and again an integration by parts for the last equality.

Observe that, since $\Gamma $ is a positive bilinear form, it satisfies the Cauchy-Schwarz Inequality
$\Gamma(u,v) \leq \sqrt{\Gamma(u)} \sqrt{\Gamma(v)}$.
Hence,
\begin{align*}
H'(t) 
& \leq - \int 
[M_{xx} + 2\rho M_y]\Gamma(u)  - 2|M_{xy}|  \sqrt{\Gamma(u)}\sqrt{\Gamma(v)} + \left(M_{yy} + \frac{M_y}{2v} \right) \Gamma(v) d\mu \\
& \quad 
- 2 \int M_y \left(\Gamma_2(u) - \rho \Gamma(u) - \frac{\Gamma(v)}{4v} \right) d\mu
\\
& =
- \int 
(\sqrt{\Gamma(u)} , \sqrt{\Gamma(v)}) \widetilde A' \binom{\sqrt{\Gamma(u)}}{\sqrt{\Gamma(u)}} d\mu   
- 2  \int M_y \left(\Gamma_2(u) - \rho \Gamma(u) - \frac{\Gamma(v)}{4v} \right) d\mu
\end{align*}
with
$$
\widetilde A' := \left( \begin{array}{cc}
M_{xx}  + 2\rho M_y & -|M_{xy}| \\ 
-|M_{xy}| & M_{yy}  + \frac{M_y}{2y}
\end{array}\right) .
$$
Now $\widetilde A'$ is positive semi-definite if and only if $A'$ is positive semi-definite (since they have same trace and same determinant), therefore
$H'(t) \leq 0$ which  gives the desired conclusion under the assumptions of Item $(ii)$. For $(i)$ the same line of proof leads to the same conclusion (one just need to remove the term $M_y/2v$ along the above computations) details are left to the reader). This ends the proof.


\end{proof}

\begin{ex}[Poincar\'e inequality]
Classical example is given by The Poincar\'e inequality. The function $M$ defined on $\mathbb{R} \times \mathbb{R}_+$ (\textit{i.e.}\ $I=\mathbb{R}$) is, in that case,
$$
M(x,y)=-x^2+ \frac{y}{\rho} 
$$
for some $\rho >0$ 
for which, $M_{xx}=-2$, $M_y=\frac{1}{\rho}$ and
$M_{xy}=M_{yy}=0$. Since $M_y$ is constant, the condition \eqref{eq:integrated-curvature2} is stronger than \eqref{eq:integrated-curvature}.
Now the matrix $A$ of Theorem \ref{thm:2variables} reads
$$
A = \left( \begin{array}{cc}
0 & 0 \\ 
0 & 0
\end{array}\right) 
$$
that is positive definite. Therefore, the hypothesis of Item $(i)$ of the Theorem \ref{thm:2variables} reduces to
\begin{equation} \label{eq:gamma2-integrated}
\int \Gamma_2 (f)d\mu \geq \rho \int \Gamma(f)  d\mu  .
\end{equation}
In particular, Theorem \ref{thm:2variables} shows that \eqref{eq:gamma2-integrated} implies
$$
-\left( \int f d\mu \right)^2 \leq \int -f^2 + \frac{\Gamma(f)}{\rho} d\mu  
$$
which, rearranging the terms, is equivalent to the following Poincar\'e inequality
\begin{equation} \label{eq:Poincare}
\Var_\mu(f) = \int f^2 d\mu - \left( \int fd\mu \right)^2 
\leq \frac{1}{\rho} \int \Gamma(f) d\mu .
\end{equation}
That the integrated $\Gamma_2$-condition \eqref{eq:gamma2-integrated} implies the Poincar\'e Inequality \eqref{eq:Poincare}  is well-known and classical. In fact the two inequalities are equivalent. To see this, observe that, by Cauchy-Schwarz' inequality and the Poincar\'e Inequality \eqref{eq:Poincare},
it holds 
\begin{align*}
 \left( \int  \Gamma(f) d\mu \right)^2 
 & = 
 \left( - \int f Lfd\mu \right)^2    
  = 
 \left( \int (f-\int fd\mu)Lf d\mu \right)^2 \\
&  \leq 
 \Var_\mu(f) \int (Lf)^2 d\mu 
  =
 \Var_\mu(f) \int \Gamma_2(f) d\mu 
 \leq 
\frac{1}{\rho} \int \Gamma(f) d\mu \int \Gamma_2(f) d\mu .
\end{align*}

The matrix $A'=\left( \begin{array}{cc}
0 & 0 \\ 
0 & \frac{1}{2\rho y}
\end{array}\right) $ is also positive definite on $\mathbb{R} \times \mathbb{R}_+$. In that case the integrated condition becomes 
$$
\int \Gamma_2 (f)d\mu \geq \rho \int \Gamma(f)  d\mu + \int \frac{\Gamma(\Gamma(f))}{4\Gamma(f)} d\mu 
$$
and implies the same Poincar\'e inequality as above (which is known to be optimal).
However the result becomes weaker since, in that particular case, the integrated condition above is stronger than \eqref{eq:gamma2-integrated} for the same conclusion: there is no gain in using Item $(ii)$.
\end{ex}

As we will see in the next example, the matrix $A'$ is useful for deriving the log-Sobolev inequality.

\begin{ex}[log-Sobolev inequality]
Another classical example is given by The log-Sobolev inequality. The function $M$ defined on $\mathbb{R}_+^* \times \mathbb{R}_+$ (\textit{i.e.}\ $I=\mathbb{R}_+^*$) is, in that case,
$$
M(x,y)=-x \log x+ \frac{y}{2\rho x} 
$$
for some $\rho >0$ 
for which $M_x=-\log x - \frac{y}{2 \rho x^2}$, $M_y = \frac{1}{2 \rho x}$, $M_{xx}=-\frac{1}{x} + \frac{y}{\rho x^3}$, $M_{xy} = - \frac{1}{2\rho x^2}$ and $M_{yy}=0$. Again, since $M_y$ is positive, the condition \eqref{eq:integrated-curvature2} is stronger than \eqref{eq:integrated-curvature}. However, 
the matrix $A$ equals
$$
A := \left( \begin{array}{cc}
\frac{y}{2 \rho x^3} &  - \frac{1}{2\rho x^2} \\ 
- \frac{1}{2\rho x^2} & 0
\end{array}\right) 
$$
that is not positive definite. Therefore, we will deal with the matrix 
$$
A' := \left( \begin{array}{cc}
\frac{y}{2 \rho x^3} &  - \frac{1}{2\rho x^2} \\ 
- \frac{1}{2\rho x^2} & \frac{1}{2\rho xy}
\end{array}\right) 
$$
that is positive definite. Hence, Theorem \ref{thm:2variables} asserts that (using Item $(ii)$),
for $\rho >0$, the integrated condition
\begin{equation} \label{eq:gamma2-integrated2}
\int \frac{\Gamma_2 (f)}{f}d\mu \geq \rho \int \frac{\Gamma(f)}{f}  d\mu + \int \frac{\Gamma(\Gamma(f))}{4f \Gamma(f)}  d\mu   
\end{equation}
 implies
$$
-\int f d\mu \log \int f d\mu \leq \int -f \log f + \frac{\Gamma(f)}{2\rho f} d\mu  
$$
that amounts to the classical log-Sobolev inequality
\begin{equation} \label{eq:log-sobolev}
\Ent_\mu(f) = \int f \log f d\mu - \int fd\mu \log \int fd\mu 
\leq \frac{1}{2\rho} \int \frac{\Gamma(f)}{f} d\mu .
\end{equation}
In the classical theory (see \textit{e.g.}\ \cite[Proposition 5.5.6]{ane}), another condition, namely $\int e^f \Gamma_2(f) d\mu \geq \rho \int e^f \Gamma(f)d\mu$,
implies the above log-Sobolev inequality with optimal constant. Moreover, in contrast with the Poincar\'e inequality, it is also known that the log-Sobolev inequality \eqref{eq:log-sobolev} does not imply $\int e^f \Gamma_2(f) d\mu \geq \rho \int e^f \Gamma(f)d\mu$ in general, see \cite[Helffer's example 5.5.7]{ane}. Using a change of function $f \to e^f$, after some algebra one can see that, in fact, \eqref{eq:gamma2-integrated2} implies  $\int e^f \Gamma_2(f) d\mu \geq \rho \int e^f \Gamma(f)d\mu$. Both are a consequence of $\Gamma_2 \geq \rho \Gamma$ (by virtue of \eqref{eq:enhanced} for \eqref{eq:gamma2-integrated2}).
\end{ex}

\section{Local inequalities in $RCD(\rho,\infty)$ spaces} \label{sec:RCD}

In this section we explain how similar statements as Theorem \ref{thm:local-commutation} and Theorem Theorem \ref{thm:reverse-local-commutation} can be proven in the setting of $RCD(\rho,\infty)$ spaces, with an application to Emanuel Milman's "reversing the hierarchy" theorem. In such spaces, as developed by Ambrosio, Gigli and Savar\'e  one needs to deal with a weak notion of the usual Bakry-Emery condition (recall the notations and definitions of Section \ref{sec:architecture}).

Using this weak notion Ambrosio and Mondino, with the aim of developing Bobkov's inequality for $RCD$ spaces, proved a key Lemma that fits exactly with our approach developed in the previous section.

As a second ingredient, we need to have in hand an enhanced $BE$-condition that fortunately exists in the literature and was developed by Savar\'e \cite{savare} (see also \cite[Theorem 3.3.8]{gigli}). In order to achieve such enhanced condition one has to pass to some kind of measure valued analogue of $\Gamma_2$ as is summarized in the next technical lemma. In order to avoid too technical definitions we may direct the reader to \cite[Section 2]{savare} for the precise definition of "$Ch$-quasi-continuous representative" that serves only as an intermediate ingredient in the following Theorem
(as a first approximation the reader may consider that $\Tilde{\phi}$ is $\phi$).

\begin{thm}[Savare \cite{savare}]  Let $\rho\in \mathbb{R}$. If $(X, d, \mu)$ is an $RCD(\rho ,\infty)$ space, then $\mathcal{D}_V\cap Lip(X)$ is an algebra, and for $f \in \mathcal{D}_V \cap Lip(X)$  the linear functional 
\[\phi \in Lip(X) \cap L^\infty(\mu) \mapsto - \int_X
\Gamma(\phi, \Gamma(f)) d\mu
\]
can be represented by a signed Borel measure $L^*\Gamma(f)$ that can be extended to a unique
element in the dual of $\mathcal{D}_V$, as 
$\int_X \Tilde{\phi} dL^*\Gamma(f)$,
where $\Tilde{\phi}$ is the $Ch$-quasi-continuous representative of $\phi$.
Moreover, the measure 
$$
\Gamma^*_{2,\rho}[f]:= \frac{1}{2}L^*\Gamma(f) -
[\Gamma(f, Lf) + \rho\Gamma(f)]
\mu
$$
is non-negative and for each $f\in \mathcal{D}_V \cap Lip(X) $ there exists a continuous, symmetric and bilinear map
\[
\gamma_{2,\rho} :\mathcal{D}_V \cap Lip(X)\times \mathcal{D}_V \cap Lip(X)\to L^1(\mu)
\]
such that
\[
\Gamma^*_{2,\rho}[f] = \gamma_{2,\rho}[f, f]\mu + \Gamma^\perp
_{2,\rho}[f],
\]
with $\Gamma^\perp_{2,\rho}[f] \geq 0$ and $\Gamma^\perp_{2,\rho}[f] \perp \mu$. 
Finally, (set $\gamma_{2,\rho}[f] :=\gamma_{2,\rho}[f, f] \geq 0$ for simplicity), 
for every $f \in \mathcal{D}_V \cap Lip(X)$ it holds
\begin{equation} \label{eq:enhanced-savare}
\Gamma(\Gamma(f)) \leq 4\gamma_{2,\rho}[f] \Gamma(f) 
       \quad \mu\text{-a.s.}
\end{equation}
\end{thm}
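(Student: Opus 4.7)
The plan is to follow Savaré's self-improvement strategy. First, to see that $\mathcal{D}_V\cap Lip(X)$ is an algebra, I would use the Leibniz rule $L(fg)=fLg+gLf+2\Gamma(f,g)$ together with the fact that on an infinitesimally Hilbertian space $\Gamma$ is a genuine derivation; the Lipschitz and $L^\infty$ bounds are what guarantee the product $fg$ and its Laplacian remain in $\mathcal{D}(Ch)$ rather than just in $L^2(\mu)$. The candidate distribution $L^*\Gamma(f)$ is then built by duality: for $\phi\in Lip(X)\cap L^\infty(\mu)$ the map $\phi\mapsto -\int_X \Gamma(\phi,\Gamma(f))d\mu$ is linear and, via the integration-by-parts formula defining $L$, formally coincides with $\int \phi\,L\Gamma(f)d\mu$ whenever $\Gamma(f)$ happens to lie in $\mathcal{D}(L)$. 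The point is that under $BE(\rho,\infty)$ this functional is bounded from one side, so it extends by Riesz representation to a signed Borel measure, and subsequently to the dual of $\mathcal{D}_V$.

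For the measure-valued $\Gamma_2$, the weak $BE(\rho,\infty)$ inequality $\Gamma_2[f;\phi]\geq \rho\int\Gamma(f)\phi\,d\mu$ for $\phi\geq 0$ says precisely that $\tfrac{1}{2}\int\phi\,dL^*\Gamma(f) \geq \int[\Gamma(f,Lf)+\rho\Gamma(f)]\phi\,d\mu$, which is exactly the statement that $\Gamma^*_{2,\rho}[f]\geq 0$ as a Borel measure. Non-negative Borel measures admit a unique Lebesgue decomposition with respect to the reference measure $\mu$, giving the absolutely continuous density $\gamma_{2,\rho}[f,f]\in L^1(\mu)$ and the singular remainder $\Gamma^\perp_{2,\rho}[f]$. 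Symmetry and bilinearity of $\gamma_{2,\rho}$ are then derived by polarization from the quadratic object $\gamma_{2,\rho}[f,f]$, and continuity into $L^1(\mu)$ follows from continuity of the construction in the graph norm of $\mathcal{D}_V$.

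The heart of the theorem is the enhanced inequality \eqref{eq:enhanced-savare}, for which the plan is Savaré's self-improvement trick. Apply the measure-valued $BE(\rho,\infty)$ inequality not to $f$ alone but to $\psi(f)$ for smooth non-decreasing $\psi$; using the diffusion chain rules $\Gamma(\psi(f))=\psi'(f)^2\Gamma(f)$ and the analogous identity for $L\psi(f)$, rewrite $\Gamma^*_{2,\rho}[\psi(f)]\geq 0$ entirely in terms of $f$, $\psi'(f)$ and $\psi''(f)$. After an integration by parts the inequality, restricted to absolutely continuous parts, takes the form of a pointwise quadratic expression in the ratio $\psi''(f)/\psi'(f)$ with coefficients involving $\gamma_{2,\rho}[f]$, $\Gamma(f)$ and $\Gamma(\Gamma(f))$. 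Since locally $\psi$ can be chosen to prescribe that ratio arbitrarily, non-negativity of the quadratic forces its discriminant to be non-positive, which is precisely $\Gamma(\Gamma(f))\leq 4\gamma_{2,\rho}[f]\Gamma(f)$ $\mu$-a.e.

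The main obstacle is this last step. The self-improvement is transparent in the classical Bakry–Émery framework where $\Gamma_2$ is a pointwise bilinear form on smooth functions; in the $RCD$ setting the corresponding object is only a measure and carries a genuine singular part. One must carefully argue that the Lebesgue decomposition of $\Gamma^*_{2,\rho}[\psi(f)]$ is compatible with the nonlinear substitution $\psi(f)$, so that the singular remainder $\Gamma^\perp_{2,\rho}[\psi(f)]$ does not contaminate the absolutely continuous quadratic inequality before one optimizes over $\psi$. Justifying the locality needed to vary $\psi''(f)/\psi'(f)$ pointwise $\mu$-a.e., together with the requisite approximation of the non-smooth objects by $Lip\cap L^\infty$ test functions, is where the bulk of the technical work of Savaré lies.
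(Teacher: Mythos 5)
The paper does not prove this theorem; it is stated as a citation to Savar\'e \cite{savare}, so the relevant comparison is with Savar\'e's own argument, which your sketch is attempting to reconstruct.

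Your outline of the first three parts (algebra property of $\mathcal{D}_V\cap Lip(X)$, representation of $L^*\Gamma(f)$ as a Borel measure, Lebesgue decomposition of $\Gamma^*_{2,\rho}[f]$ into an $L^1$-density plus a nonnegative singular remainder) matches Savar\'e's structure. The gap lies in the self-improvement step. The one-variable family $\psi(f)$ is not enough. Expanding $\gamma_{2,\rho}[\psi(f)]\geq 0$ via the diffusion chain rules $\Gamma(\psi(f))=\psi'(f)^2\Gamma(f)$, $L\psi(f)=\psi'(f)Lf+\psi''(f)\Gamma(f)$ and optimizing over $s=\psi''(f)/\psi'(f)$ yields the pointwise quadratic
$$
\gamma_{2,\rho}[f]+s\,\Gamma(f,\Gamma(f))+s^2\,\Gamma(f)^2\ \geq\ 0,
$$
whose nonnegativity forces
$$
\Gamma(f,\Gamma(f))^2\ \leq\ 4\,\gamma_{2,\rho}[f]\,\Gamma(f)^2 .
$$
The cross term that appears is $\Gamma(f,\Gamma(f))$, not $\Gamma(\Gamma(f))$, and by the $\Gamma$-Cauchy--Schwarz inequality $\Gamma(f,\Gamma(f))^2\leq\Gamma(f)\,\Gamma(\Gamma(f))$ the above is strictly weaker than the target $\Gamma(\Gamma(f))\leq 4\gamma_{2,\rho}[f]\Gamma(f)$. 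No pointwise integration by parts converts one to the other, so your claimed reduction to a discriminant condition with $\Gamma(\Gamma(f))$ as a coefficient does not come out of the $\psi(f)$ trick alone.

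What is actually required is a \emph{two-variable} self-improvement. One applies the measure-valued $BE$ condition to a family such as $\psi(f)\,g$, or to linear combinations $a\psi(f)+bg$, with a second auxiliary function $g\in\mathcal{D}_V\cap Lip(X)$; this is precisely why Savar\'e establishes the algebra property first, not as a side remark but as a prerequisite for these test functions to be admissible. The resulting matrix inequality forces
$$
\Gamma(g,\Gamma(f))^2\ \leq\ 4\,\gamma_{2,\rho}[f]\,\Gamma(f)\,\Gamma(g) \qquad\text{for all admissible }g,
$$
and \eqref{eq:enhanced-savare} is then obtained by an approximation that effectively takes $g=\Gamma(f)$, a choice which requires care because $\Gamma(f)$ need not itself lie in $\mathcal{D}_V\cap Lip(X)$. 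You correctly flagged that the interaction between the Lebesgue decomposition and the nonlinear substitution is delicate, but the auxiliary second variable is not an afterthought to be cleaned up later --- without it the argument produces the wrong quadratic form and the enhanced inequality is out of reach.
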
 

\begin{rem}
Inequality \eqref{eq:enhanced-savare} constitutes the announced enhanced $\Gamma_2$-condition in $RCD$ spaces (to be compared to \eqref{eq:enhanced}).

We observe that in Savar\'e's approach \cite{savare} another notion, called  $\Gamma_2^*$, is used in place
of $\Gamma^*_{2,\rho}$. We followed here  Ambrosio and Mondino terminology that is more fitted for our purpose. 
\end{rem}

A final technical lemma is needed in order to apply the previous notions to $P_tf$, for $f\in L^\infty(\mu)$ (see \cite[Lemma 2.5]{ambrosio-mondino}).

\begin{lem}[Ambrosio-Mondino \cite{ambrosio-mondino}]    
Let $(X, d, \mu)$ be an $RCD(\rho,\infty)$ metric probability space for some $\rho \in \mathbb{R}$, and let $f\in L^2(\mu)$ and $T>0$. Then
\[
(0,T]\ni t \mapsto P_tf\in\mathcal{D}(Ch)
\]
is locally Lipschitz in $(0,T]$ and continuous up to $t=0$ whenever $f\in \mathcal{D}(Ch)$. Moreover
\[
(0,T]\ni t \mapsto \Gamma(P_tf)\in L^1(\mu)
\]
with \[\frac{\Gamma(P_sf)-\Gamma(P_tf)}{s-t}\xrightarrow[s\to t]{L^1} 2\Gamma(LP_tf,P_tf)
\]
for almost all $t>0$.
 Finally, $P_tf\in \mathcal{D}_V\cap Lip(X)$
 for all $f\in L^\infty(\mu)$. 
 \end{lem}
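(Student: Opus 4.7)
My plan is to exploit that $P_t$ is simultaneously the self-adjoint $L^2(\mu)$-semigroup generated by the non-positive operator $L$ and the $L^2$-gradient flow of the quadratic Cheeger energy $Ch$, reducing each claim to spectral calculus plus bilinearity of $\Gamma$, and invoking the reverse local Poincar\'e inequality (equivalent to $BE(\rho,\infty)$) together with the Sobolev-to-Lipschitz property of $RCD(\rho,\infty)$ spaces for the last assertion.

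For the local Lipschitzness of $t\mapsto P_tf\in\mathcal{D}(Ch)$, the spectral representation $P_tf=\int_0^\infty e^{-\lambda t}dE_\lambda f$ combined with $|e^{-\lambda t}-e^{-\lambda s}|\leq \lambda|t-s|e^{-\lambda\min(s,t)}$ yields
\begin{equation*}
2\,Ch(P_tf-P_sf)=\int_0^\infty \lambda(e^{-\lambda t}-e^{-\lambda s})^2 d\|E_\lambda f\|^2\leq (t-s)^2\int_0^\infty \lambda^3 e^{-2\lambda\min(s,t)}d\|E_\lambda f\|^2,
\end{equation*}
the last integral being uniformly finite on $\min(s,t)\geq \tau>0$ since $\lambda^3 e^{-2\lambda\tau}$ is bounded. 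Continuity up to $t=0$ when $f\in\mathcal{D}(Ch)$ is the standard strong continuity of the gradient flow of a convex lower semicontinuous functional on a Hilbert space.

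For $t\mapsto\Gamma(P_tf)\in L^1(\mu)$, I would use the bilinearity identity $\Gamma(P_sf)-\Gamma(P_tf)=\Gamma(P_sf-P_tf,\,P_sf+P_tf)$ combined with the pointwise Cauchy-Schwarz bound $|\Gamma(u,v)|\leq\sqrt{\Gamma(u)\Gamma(v)}$, Cauchy-Schwarz for $\mu$, and the parallelogram identity $Ch(u+v)+Ch(u-v)=2Ch(u)+2Ch(v)$ to obtain
\begin{equation*}
\|\Gamma(P_sf)-\Gamma(P_tf)\|_{L^1(\mu)}\leq 2\sqrt{Ch(P_sf-P_tf)}\sqrt{Ch(P_sf)+Ch(P_tf)},
\end{equation*}
whence local Lipschitzness in $L^1$. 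Writing the difference quotient as $\Gamma\!\left(\tfrac{P_sf-P_tf}{s-t},\,P_sf+P_tf\right)$ and letting $s\to t$, the fact that for a.e.\ $t>0$ one has $\tfrac{P_sf-P_tf}{s-t}\to LP_tf$ in $\mathcal{D}(Ch)$ (by differentiability of the absolutely continuous gradient flow curve; alternatively by the same spectral argument) and $P_sf+P_tf\to 2P_tf$ in $\mathcal{D}(Ch)$, together with one further round of the bilinear Cauchy-Schwarz, identifies the $L^1(\mu)$-limit as $2\Gamma(LP_tf,P_tf)$.

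Finally, for $f\in L^\infty(\mu)$, the semigroup property $P_tf=P_{t/2}(P_{t/2}f)$ and the iterated smoothing $L^2\to\mathcal{D}(L)\to\mathcal{D}_V$ of $P_t$ — obtained via the spectral bound that $\lambda^k e^{-\lambda t}$ is bounded for every $k\geq 0$ — yields $P_tf\in\mathcal{D}_V$. To upgrade to $\mathrm{Lip}(X)$, apply the reverse local Poincar\'e inequality $\alpha\Gamma(P_tf)\leq P_t(f^2)-(P_tf)^2$ valid under $BE(\rho,\infty)$ with $\alpha=(e^{2\rho t}-1)/\rho$ to get $\|\sqrt{\Gamma(P_tf)}\|_{L^\infty(\mu)}\leq C(t,\rho)\|f\|_\infty$, and then invoke the Sobolev-to-Lipschitz property valid in $RCD(\rho,\infty)$ spaces. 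The most delicate step, I expect, is the $L^1(\mu)$-convergence of the difference quotient for $\Gamma(P_tf)$: one must thread the convergence of $(P_sf-P_tf)/(s-t)$ toward $LP_tf$ in the Cheeger norm and the uniform Cheeger-boundedness of $P_sf+P_tf$ through the bilinear structure of $\Gamma$, to promote convergence from pointwise or in measure to genuine $L^1$-convergence.
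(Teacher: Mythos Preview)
The paper does not give a proof of this lemma; it is quoted verbatim from Ambrosio--Mondino \cite[Lemma~2.5]{ambrosio-mondino} as an external technical input, so there is no ``paper's own proof'' to compare against. Assessed on its own terms your sketch is essentially correct and follows the standard route: the spectral bound $\lambda^k e^{-\lambda\tau}\le C_{k,\tau}$ handles the $\mathcal{D}(Ch)$-regularity of $t\mapsto P_tf$, the bilinear identity $\Gamma(P_sf)-\Gamma(P_tf)=\Gamma(P_sf-P_tf,P_sf+P_tf)$ together with $\|\Gamma(u,v)\|_{L^1}\le 2\sqrt{Ch(u)}\sqrt{Ch(v)}$ gives local Lipschitzness of $t\mapsto\Gamma(P_tf)$ in $L^1$, and the same decomposition identifies the $L^1$-derivative once you know $(P_sf-P_tf)/(s-t)\to LP_tf$ in $\mathcal{D}(Ch)$ (which in fact holds for \emph{every} $t>0$ by the same spectral calculus). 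The final step via reverse local Poincar\'e plus the Sobolev-to-Lipschitz property of $RCD(\rho,\infty)$ spaces is also the standard mechanism.

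One caution on logical order, however. Within \emph{this} paper, the reverse local Poincar\'e inequality in the $RCD$ setting appears only as a consequence of Theorem~\ref{th:local-RCD}, which is proved from Proposition~\ref{prop:ambrosio-mondino}, which in turn relies on the very lemma you are trying to establish. So if ``valid under $BE(\rho,\infty)$'' is meant to point to the results of Section~\ref{sec:RCD}, your argument is circular. The fix is to make explicit that you are invoking the $L^\infty$-to-Lipschitz regularisation of $P_t$ as proved directly in Ambrosio--Gigli--Savar\'e \cite{AGS} (via Wasserstein contraction and Kuwada duality), which is logically prior to and independent of \cite{ambrosio-mondino}; with that citation in place your argument stands.
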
 

Putting together these ingredients Ambrosio and Mondino proved the following result which fits exactly with the setting of Section 4.
 
\begin{prop}(Ambrosio-Mondino \cite{ambrosio-mondino})\label{prop:ambrosio-mondino} 
Let $(X, d, \mu)$ be an $RCD(\rho,\infty)$ metric probability space for some $\rho \in \mathbb{R}$. Let $N=N(t,x,y) :\mathbb{R}^3 \to \mathbb{R}$ be a $C^4$ function satisfying  $N_y\geq 0$. Fix $T > 0$, $f \in Lip(X) \cap L^\infty(\mu)$ and 
$\phi \in L^\infty(\mu)$ with $\phi\geq 0$ $\mu$-a.s.. Set
\[
H(t)=\int_X P_t\big( N(t, P_{T-t}(f) ,\Gamma(P_{T-t}f) \big)\phi d\mu \qquad t\in [0,T] .
\]
Then, $H$ is continuous on $[0,T]$, locally Lipschitz on $(0,T)$ and (denoting $g=P_{T-t}(f)$) almost everywhere in $(0,T)$ it holds
\[
H'(t)\geq \int_X \big(N_t+(N_{xx}+2\rho N_y)\Gamma(g)+2N_{xy}\Gamma(g,\Gamma(g))+N_{yy}\Gamma(\Gamma(g)) +2N_{y}\gamma_{2,\rho}[g]  \big)P_t\phi d\mu
\]
where as usual  $N=N(t,x,y)$ and similarly for $N_y,N_{xx}$ etc.
\end{prop}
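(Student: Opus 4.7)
The plan is to mirror the smooth semi-group derivative computation, substituting Savar\'e's measure-valued objects wherever pointwise identities fail in the $RCD$ setting. Set $g=g_t:=P_{T-t}f$ throughout, so that $\partial_t g_t=-Lg_t$ and $\partial_t\Gamma(g_t)=-2\Gamma(g_t,Lg_t)$. By the self-adjointness of $P_t$ in $L^2(\mu)$,
$$
H(t)=\int_X N(t,g_t,\Gamma(g_t))\,P_t\phi\,d\mu,
$$
and the assumption that $f\in Lip(X)\cap L^\infty(\mu)$ ensures, via the preceding Ambrosio--Mondino lemma, that $g_t\in \mathcal{D}_V\cap Lip(X)$ with $t\mapsto g_t$ and $t\mapsto \Gamma(g_t)$ locally Lipschitz in the relevant topologies. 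Combined with the $\mathcal{C}^4$ regularity of $N$ and the boundedness of $\phi$, these deliver the claimed continuity on $[0,T]$ and local Lipschitz regularity on $(0,T)$.

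Next I would differentiate under the integral sign, applying the chain rule in the $(t,x,y)$ variables of $N$ and trading $LP_t\phi$ for its $L^2$-action on $N$ via the integration-by-parts definition of $L$. Using the diffusion property to expand $\Gamma(N(t,g,\Gamma(g)),\cdot\,)$, the terms $N_x Lg$ arising from $\partial_t g$ and from $LN$ cancel, and after rearrangement I expect
\begin{align*}
H'(t)&=\int_X \bigl[N_t+N_{xx}\Gamma(g)+2N_{xy}\Gamma(g,\Gamma(g))+N_{yy}\Gamma(\Gamma(g))\bigr]P_t\phi\,d\mu \\
&\quad - 2\int_X N_y \Gamma(g,Lg) P_t\phi\, d\mu + \int_X \widetilde{N_y P_t\phi} \, dL^*\Gamma(g).
\end{align*}
The last integral uses that $\Gamma(g)$ need not lie in $\mathcal{D}(L)$, forcing the replacement of a classical $L\Gamma(g)$ by Savar\'e's signed Borel measure $L^*\Gamma(g)$.

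Finally I would invoke the very definition $\Gamma^*_{2,\rho}[g]=\tfrac{1}{2}L^*\Gamma(g)-\bigl[\Gamma(g,Lg)+\rho\Gamma(g)\bigr]\mu$, which converts the combination $-2\Gamma(g,Lg)\mu+L^*\Gamma(g)$ on the right-hand side into $2\Gamma^*_{2,\rho}[g]+2\rho\Gamma(g)\mu$. Since $N_y\geq 0$, $P_t\phi\geq 0$ (Markov positivity), and the decomposition $\Gamma^*_{2,\rho}[g]=\gamma_{2,\rho}[g]\mu+\Gamma^\perp_{2,\rho}[g]$ from Savar\'e's theorem has non-negative singular part, the contribution of $\Gamma^\perp_{2,\rho}[g]$ can simply be discarded to yield a lower bound. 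Collecting terms matches the announced inequality.

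The main obstacle will be the rigorous justification of the chain-rule and integration-by-parts manipulations when $\Gamma(g)$ is merely in $L^1(\mu)\cap L^\infty(\mu)$. Concretely, one must verify that $N_y(t,g,\Gamma(g))\,P_t\phi$ is a legitimate element of $\mathcal{D}_V$ so that it pairs with $L^*\Gamma(g)$ through its $Ch$-quasi-continuous representative; this relies on the algebra property of $\mathcal{D}_V\cap Lip(X)$ asserted in Savar\'e's theorem, on $L^\infty$-bounds for $g$, $\Gamma(g)$, $P_t\phi$ and the derivatives of $N$, and on a dominated-convergence argument for exchanging limits and differentiation. Once these approximation and measurability steps are in place, the algebraic calculation above closes the argument.
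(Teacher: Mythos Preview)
The paper does not supply its own proof of this proposition: it is quoted verbatim from Ambrosio--Mondino and used as a black box, so there is no in-paper argument to compare against. Your sketch is the correct one and matches the strategy of the original reference: rewrite $H(t)=\int N(t,g,\Gamma(g))\,P_t\phi\,d\mu$ by self-adjointness, differentiate, integrate by parts to produce $\int \widetilde{N_yP_t\phi}\,dL^*\Gamma(g)$ in place of the nonexistent $L\Gamma(g)$, convert via $\Gamma^*_{2,\rho}[g]=\tfrac12 L^*\Gamma(g)-[\Gamma(g,Lg)+\rho\Gamma(g)]\mu$, and drop the nonnegative singular piece using $N_y\ge 0$ and $P_t\phi\ge 0$.

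Two small remarks on the technical paragraph. First, the pairing with $L^*\Gamma(g)$ in Savar\'e's framework is initially defined for test functions in $Lip(X)\cap L^\infty(\mu)$ and then extended to $\mathcal{D}(Ch)$ (not $\mathcal{D}_V$) via quasi-continuous representatives; what you actually need is that $N_y(t,g,\Gamma(g))\,P_t\phi\in \mathcal{D}(Ch)\cap L^\infty(\mu)$, which follows from the algebra property and the fact that $\Gamma(g)\in \mathcal{D}(Ch)\cap L^\infty(\mu)$ for $g=P_{T-t}f$ with $f\in Lip\cap L^\infty$. Second, when you discard $\Gamma^\perp_{2,\rho}[g]$ you are integrating a quasi-continuous representative of $N_yP_t\phi$ against a measure singular to $\mu$; the nonnegativity of that representative (not just of the $\mu$-class) is what makes this step legitimate, and this is exactly the sort of point Ambrosio--Mondino handle by approximation. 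Your identification of this as ``the main obstacle'' is accurate.
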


Using the above Lemma Ambrosio and Mondino proved a local Bobkov's inequality in $RCD$-spaces. 
Setting $N(t,x,y)=M(x,g_\alpha(t)y)$, with $g_\alpha = \frac{1-e^{-2\rho t}}{\rho}+\alpha e^{-2\rho t}$ defined as in Section \ref{sec:commutation} (or 
$N(s,x,y)=M(x,h_\alpha(s)y)$, with $h_\alpha (s)= \frac{e^{2\rho (t-s)}-1}{\rho}+\alpha e^{2\rho (t-s)}$), it is immediate to deduce, using Proposition \ref{prop:ambrosio-mondino}, the following  analogue of Theorem \ref{thm:local-commutation} and Theorem \ref{thm:reverse-local-commutation}.

\begin{thm} \label{th:local-RCD}
Let $(X, d, \mu)$ be an $RCD(\rho,\infty)$ metric probability space for some $\rho \in \mathbb{R}$. Suppose $M:\mathbb{R}\times\mathbb{R}_+\to \mathbb{R}$ is a function of class $C^4$ and that the matrix 
\[
A := \left( \begin{array}{cc}
M_{xx}  + 2 M_y & M_{xy} \\ 
M_{xy} & M_{yy}  + \frac{M_y}{2y}
\end{array}\right)
\]
is positive semi-definite. Then for all $f\in Lip(X)\cap L^\infty(\mu)$ and all $t \geq 0$ it holds 
\begin{equation*} 
M\big(P_tf,\alpha\Gamma(P_tf)\big)\leq P_t\big(M(f,g_\alpha(t)\Gamma(f))\big).
\end{equation*}
Suppose now that $M$ (of class $C^4$) is such that
$$
B = \left(
\begin{array}{ccccc}
M_{xx} - 2 M_{y} & M_{xy} \\
M_{xy} & M_{yy}+\frac{M_{y}}{2y} \\
\end{array}
\right)
$$
is positive semi-definite. Then for all $f\in Lip(X)\cap L^\infty(\mu)$ and all $t \geq 0$ it holds 
\begin{equation*} 
M\big(P_tf,h_\alpha(0)\Gamma(P_tf)\big)\leq P_t\big(M(f,\alpha\Gamma(f))\big).
\end{equation*}
\end{thm}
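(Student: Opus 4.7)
The plan is to invoke Proposition \ref{prop:ambrosio-mondino} with the specific choice $N(t,x,y) := M(x, g_\alpha(t) y)$ for the first inequality, and $N(t,x,y) := M(x, h_\alpha(t) y)$ for the reverse one. After this substitution the integrand in the Ambrosio-Mondino lower bound for $H'(t)$ will turn out to be non-negative once Savaré's enhanced $\Gamma_2$-inequality \eqref{eq:enhanced-savare} is invoked, and the proof then reduces to the same algebra already seen in the proofs of Theorems \ref{thm:local-commutation} and \ref{thm:reverse-local-commutation}.

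Concretely, fix $T > 0$, $f \in Lip(X) \cap L^\infty(\mu)$, a non-negative $\phi \in L^\infty(\mu)$, and set
$$H(t) := \int_X P_t\bigl( M(P_{T-t}f, g_\alpha(t)\Gamma(P_{T-t}f)) \bigr) \phi\, d\mu, \qquad t \in [0,T].$$
Writing $g := P_{T-t}f$ and evaluating the partial derivatives of $M$ at $(g, g_\alpha(t)\Gamma(g))$, the chain rule gives $N_t = g_\alpha'(t) M_y \Gamma(g)$, $N_y = g_\alpha(t) M_y \geq 0$, $N_{xx} = M_{xx}$, $N_{xy} = g_\alpha(t) M_{xy}$ and $N_{yy} = g_\alpha(t)^2 M_{yy}$. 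Substituting into the lower bound of Proposition \ref{prop:ambrosio-mondino} and using the algebraic identity $g_\alpha'(t) + 2\rho g_\alpha(t) = 2$, the coefficient of $M_y\Gamma(g)$ in the integrand collapses to exactly $2$.

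Next I would invoke \eqref{eq:enhanced-savare} in the form $\gamma_{2,\rho}[g] \geq \frac{\Gamma(\Gamma(g))}{4\Gamma(g)}$ to bound $2 N_y \gamma_{2,\rho}[g] = 2 g_\alpha(t) M_y \gamma_{2,\rho}[g]$ from below by $\frac{g_\alpha(t) M_y \Gamma(\Gamma(g))}{2\Gamma(g)}$ (using $M_y \geq 0$ and $g_\alpha(t) \geq 0$). Collecting the terms, the integrand is at least
$$(M_{xx} + 2M_y)\Gamma(g) + 2 g_\alpha(t) M_{xy} \Gamma(g,\Gamma(g)) + g_\alpha(t)^2 \Bigl(M_{yy} + \frac{M_y}{2y}\Bigr) \Gamma(\Gamma(g))$$
evaluated at $y = g_\alpha(t)\Gamma(g)$. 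Applying the Cauchy-Schwarz property $|\Gamma(g,\Gamma(g))| \leq \sqrt{\Gamma(g)\Gamma(\Gamma(g))}$ and viewing the resulting expression as a quadratic form in $\bigl(\sqrt{\Gamma(g)}, g_\alpha(t)\sqrt{\Gamma(\Gamma(g))}\bigr)$, positive semi-definiteness of $A$ (equivalently of the matrix obtained by replacing $M_{xy}$ with $-|M_{xy}|$, which has the same trace and determinant) forces the whole integrand to be non-negative. Hence $H'(t)\geq 0$, so $H(0) \leq H(T)$, and since $\phi \geq 0$ is arbitrary this yields the first claim $\mu$-a.e.

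The reverse inequality is obtained by running the same scheme with $N(t,x,y) := M(x, h_\alpha(t) y)$ and the dual identity $h_\alpha'(t) + 2\rho h_\alpha(t) = -2$, which flips the sign of the $M_y \Gamma(g)$ contribution so that the Gram matrix becomes $B$ in place of $A$. The genuinely new ingredient compared with the smooth Markov triple case, and the main obstacle, is that $\Gamma_2$ need not exist as a function on an $RCD$ space, so the pointwise enhanced condition \eqref{eq:enhanced} is unavailable; however, the Ambrosio-Mondino formula has already absorbed the distributional part of $\Gamma_2$ into the derivative identity for $H$, and Savaré's \eqref{eq:enhanced-savare} provides exactly the $\mu$-a.s. scalar bound on $\gamma_{2,\rho}[g]$ needed to produce the $\frac{M_y}{2y}$ correction in the $(2,2)$-entry of the Gram matrix. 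With these two inputs in hand the argument is formally identical to the smooth case.
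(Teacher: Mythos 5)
Your proposal is correct and matches the paper's argument: the paper's proof is exactly the one-line observation that substituting $N(t,x,y)=M(x,g_\alpha(t)y)$ (respectively $N(t,x,y)=M(x,h_\alpha(t)y)$) into Proposition \ref{prop:ambrosio-mondino}, using the identity $g_\alpha'+2\rho g_\alpha=2$ (respectively $h_\alpha'+2\rho h_\alpha=-2$) together with Savaré's bound \eqref{eq:enhanced-savare}, reduces the computation to the same Gram-matrix argument as in Theorem \ref{thm:local-commutation}. Your write-up is simply a more detailed execution of that same plan, including the correct chain-rule formulas for $N_t,N_y,N_{xx},N_{xy},N_{yy}$, the correct use of $\gamma_{2,\rho}[g]\geq\Gamma(\Gamma(g))/(4\Gamma(g))$ in place of the pointwise enhanced Bakry-Émery bound, and the observation that arbitrariness of $\phi\geq 0$ promotes the integral inequality to a $\mu$-a.e.\ pointwise one.
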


\begin{proof}
The proof follows the line of  the proof of Theorem \ref{thm:local-commutation}, using Proposition \ref{prop:ambrosio-mondino}.
\end{proof}

\begin{rem} \label{rm:andreas}
The above Theorem is stated when $M(x,y)$ is defined on $\mathbb{R}\times\mathbb{R}_+$, but in certain interesting inequalities, like the local Bobkov's inequality, the $x$ variable takes values in some closed interval $[a,b]$ (or $[a,+\infty)$ etc.). In such cases an approximation is needed, which (for example) works for cases when $M$ is uniformly bounded on the $x$ variable and increasing in the $y$ variable. We refer the reader to \cite{ambrosio-mondino} for a detailed description of such an approximation.

As usual, when $\rho >0$, the local inequality 
implies an inequality for the measure $\mu$. Namely, under the assumption on $A$ of Theorem \ref{th:local-RCD}, if $\rho > 0$, for all $f\in Lip(X)\cap L^\infty(\mu)$ it holds
$$
M \left( \int f d\mu , 0 \right) 
\leq 
\int M\left(f, \frac{\Gamma(f)}{\rho} \right) d\mu  .
$$
The maps $M$ considered in the series of examples of Section \ref{sec:commutation} (related to Poincar\'e, log-Sobolev, Bobkov, Beckner, exponential inequality) would lead to similar local inequalities in $RCD(\rho,\infty)$ spaces, and inequalities for the measure $\mu$ when $\rho >0$. 

Also, the reverse local Poincar\'e and log-Sobolev inequalities hold, which seems, to the best of our knowledge, new in the context of $RCD(\rho,\infty)$ spaces.

As an illustration, if $(X, d, \mu)$ is an $RCD(\rho,\infty)$ metric probability space
with $\rho > 0$, for all $f\in Lip(X)\cap L^\infty(\mu)$ it holds
$$
\log \int e^{f}d\mu - \int fd\mu \leq 10 \int \frac{e^{\frac{\Gamma(f)}{2\rho}}}{1+\sqrt{\frac{\Gamma(f)}{\rho}}}d\mu  
$$
Such an inequality is also new in that context.
\end{rem}

As a consequence of the reverse Poincar\'e and log-Sobolev inequalities obtained in $RCD(\rho,\infty)$ 
spaces, obtained from Theorem \ref{th:local-RCD}
with the choices $M(x,y)=x^2+y$ and $M(x,y)=x\log x + \frac{y}{2x}$, we now extend Ledoux's semi-group approach of Milman's theorem showing that, under some curvature condition, concentration implies isoperimetry.

Indeed, in a series of papers \cite{milman,milman2008,milman2010} E. Milman reversed the usual hierarchy, between measure concentration, isoperimetry and Orlicz-Sobolev inequalities, in the setting of (weighted) Riemannian manifolds $(M,g)$ satisfying $CD(0,\infty)$. Soon after Ledoux \cite{ledoux11} proposed an alternative argument, for instances of this approach, based only on semi-group techniques. Using the notation of \eqref{eq: concentration and isoperimetric profile} we state Milman's result below.
\begin{thm}[E. Milman]
    Let $(M,g)$ be a complete Riemannian manifold, equipped with the geodesic distance $d$, $dx$ be the Riemannian volume element and $\mu(dx)=e^{-V(x)}dx$, with $V\in C^2$, be a probability measure. Suppose that $(M,d,\mu)$ satisfies $CD(0,\infty)$ and $\lim_{r\to \infty}\alpha_\mu(r)=0$. Let $r_\mu(t)$ be the smallest $r>0$ such that $\alpha_\mu(r)< t$. Then there exists $c=c(\alpha_\mu)>0$ such that 
    \[
    I_\mu(t)\geq \frac{c}{r(t)}t \log \frac{1}{t}, \qquad t\in [0,\frac{1}{2}].
    \]
\end{thm}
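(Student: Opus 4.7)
The plan is to adapt Ledoux's semigroup argument for Milman's ``reversing the hierarchy'' theorem, invoking the reverse local log-Sobolev inequality now at our disposal via Theorem \ref{th:local-RCD}. Specifically, under $CD(0,\infty)$, the choice $M(x,y) = x\log x + y/(2x)$ with $\alpha = 0$ in the reverse version yields, upon applying the inequality to a Lipschitz approximation of $\mathds{1}_A$ and passing to the limit (exploiting that $x\log x$ vanishes at both $0$ and $1$),
\[
\Gamma(P_t\mathds{1}_A) \leq \frac{(P_t\mathds{1}_A)^2\,\log(1/P_t\mathds{1}_A)}{t},
\]
or equivalently that $v_t := \sqrt{-\log P_t\mathds{1}_A}$ is $(2\sqrt{t})^{-1}$-Lipschitz $\mu$-almost everywhere. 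This is the central analytic input.

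Next I would exploit the concentration hypothesis. Since $v_t$ is Lipschitz, for any median $m_t$ of $v_t$ and any $r>0$,
\[
\mu\bigl(\{|v_t - m_t| \geq r\}\bigr) \leq 2\alpha_\mu(2r\sqrt{t}).
\]
Translated back to $P_t\mathds{1}_A$, this pins down the level sets of $P_t\mathds{1}_A$ in terms of the profile $\alpha_\mu$. The normalization $\int P_t\mathds{1}_A\, d\mu = \mu(A) = s$ forces the median $m_t$ to be of order $\sqrt{\log(1/s)}$ up to multiplicative constants. Calibrating $t$ so that $\sqrt{t}$ matches the concentration scale $1/r_\mu(s)$, one checks that a super-level set $\{P_t\mathds{1}_A > \eta\}$ at threshold $\eta$ near $e^{-m_t^2}$ approximates $A$ up to a set of $\mu$-measure $O(s)$. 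Combining the pointwise gradient bound $|\nabla P_t\mathds{1}_A|_w \leq (P_t\mathds{1}_A)\sqrt{\log(1/P_t\mathds{1}_A)}/\sqrt{t}$ with the co-area formula for $P_t\mathds{1}_A$ then transfers the perimeter estimate of this smoothed level set back to $A$ itself, and a bound $\mu^+(A) \gtrsim (c/r_\mu(s))\, s\log(1/s)$ results.

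The main obstacle is this last calibration step: bridging the $P_t$-smoothed information (Lipschitz control of $v_t$, concentration around $m_t$) with intrinsic data about $A$ (the perimeter $\mu^+(A)$) so as to recover the exact rate $s\log(1/s)/r_\mu(s)$ rather than anything looser. A second technical concern is the rigorous passage to the limit in the reverse log-Sobolev inequality applied to $\mathds{1}_A$: one approximates by smooth Lipschitz functions, uses the continuous representatives of $P_tf$ available in this smooth Riemannian setting, and exploits the boundedness of $x\log x$ on $[0,1]$ (with vanishing boundary values) so that both sides of Theorem \ref{th:local-RCD} pass cleanly to the limit.
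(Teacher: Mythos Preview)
The paper does not give its own proof of this statement: Milman's theorem is quoted as a known result, and the paper's contribution is to observe that Ledoux's semigroup proof transports to the $RCD(0,\infty)$ setting once the reverse local inequalities are available there. So the relevant comparison is between your sketch and the ingredients the paper attributes to Ledoux.

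On that comparison, your outline captures the heart of the argument---the reverse local log-Sobolev inequality applied to $\mathds{1}_A$ yields the Lipschitz bound on $v_t=\sqrt{-\log P_t\mathds{1}_A}$, and concentration then controls its level sets---but you are missing one of the three tools the paper explicitly lists as essential to Ledoux's proof: the strong commutation $\sqrt{\Gamma(P_tg)}\leq P_t\bigl(\sqrt{\Gamma(g)}\bigr)$. This is precisely the bridge between the smoothed object $P_t\mathds{1}_A$ and the perimeter $\mu^+(A)$: integrating it (after approximating $\mathds{1}_A$ by Lipschitz functions) gives $\int\sqrt{\Gamma(P_t\mathds{1}_A)}\,d\mu\leq\mu^+(A)$ for every $t>0$, and it is this inequality that one then bounds from below using the reverse log-Sobolev and concentration information. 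Your proposed route via ``the co-area formula for $P_t\mathds{1}_A$'' does not accomplish this transfer: co-area controls the perimeters of the level sets $\{P_t\mathds{1}_A>\eta\}$, not of $A$ itself, and there is no a priori reason those level sets have perimeter comparable to $\mu^+(A)$. The paper also names the reverse local Poincar\'e inequality as an ingredient; in Ledoux's argument it handles the regime of $t$ (or of $\mu(A)$) complementary to the one where the log-Sobolev estimate is sharp, and your sketch omits it entirely.
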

The tools used by Ledoux to prove this theorem are essentially the reverse local Poincar\'e and reverse local log-Sobolev inequalities (as we already mentioned), along with $\sqrt{\Gamma(P_tg)}\leq P_t(\sqrt{\Gamma(g)})$, which is a consequence of the curvature bound. In addition, a technical ingredient is that, in smooth $CD(0,\infty)$ it always holds $\Gamma(f)=|\nabla f|^2$ from which $\Gamma(f)\leq L$ implies that $f$ is $\sqrt{L}$-Lipshchitz. 

Since $\sqrt{\Gamma(P_tg)}\leq P_t(\sqrt{\Gamma(g)})$ is already known in  $RCD(0,\infty)$ spaces (\cite{savare}), our reverse local Poincar\'e/log-Sobolev inequalities would complement the necessary ingredients to extend
E. Milman's theorem to $RCD(0,\infty)$ spaces if $\Gamma(f)=|\nabla f|^2$ was true. However, what is known to be true is the following
\[
|\nabla f|_w=\sqrt{\Gamma(f)}\leq |\nabla f| \;\; \mu-\text{a.e.}
\]
Therefore, we will additionally assume that $|\nabla f|\leq K|\nabla f|_w$ for some constant $K$, depending only on the space, in order for Ledoux's argument to be carried over in our setting. Observe that such an additional assumption holds for example on doubling space satisfying weak local Poincar\'e inequality as proved by Cheeger \cite{cheeger}. We refer the interested reader to  \cite{ambrosio2013,gigli-han,gigli-nobili} for more on weak upper gradients.

The following statement constitutes a generalization of Milman's theorem in $RCD(0,\infty)$ spaces.

\begin{thm} \label{th:Milman-RCD}
    Let $(X,d,\mu)$ be a probability metric space that satisfies $RCD(0,\infty)$. Suppose as well that there exists a constant $K=K(X,d,\mu)$ so that $|\nabla f|\leq K|\nabla f|_w$. Then if $\lim_{r\to \infty}\alpha_\mu(r)=0$ and $r_\mu(t)$ is the smallest $r>0$ such that $\alpha_\mu(r)< t$, then there exists $c=c(\alpha_\mu,K)>0$ so that
    \[
    I_\mu(t)\geq \frac{c}{r(t)}t \log \frac{1}{t}, \qquad t\in [0,\frac{1}{2}].
    \]
\end{thm}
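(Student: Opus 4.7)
The plan is to transfer Ledoux's semigroup proof of E.~Milman's hierarchy-reversal theorem to the $RCD(0,\infty)$ setting. The required ingredients are now at our disposal: the reverse local Poincaré inequality
\[
P_s(f^2)-(P_sf)^2 \geq 2s\, \Gamma(P_sf),
\]
and the reverse local log-Sobolev inequality
\[
P_s(f\log f)-P_sf\log(P_sf) \geq s \,\frac{\Gamma(P_sf)}{P_sf},
\]
supplied by Theorem \ref{th:local-RCD} for the choices $\rho=0$, $\alpha=0$ and $M(x,y)=x^2+y$, respectively $M(x,y)=x\log x+\tfrac{y}{2x}$, together with Savaré's gradient contraction $\sqrt{\Gamma(P_sg)}\leq P_s(\sqrt{\Gamma(g)})$, already recorded in Section \ref{sec:architecture}.

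First I would fix a Borel set $A\subset X$ with $\mu(A)=t\in(0,1/2]$, approximate $\mathbf{1}_A$ in $L^1(\mu)$ by a sequence $(f_n)\subset \mathrm{Lip}(X)\cap L^\infty(\mu)$ with $0\leq f_n\leq 1$ and $\int|\nabla f_n|_w\,d\mu\to \mu^+(A)$, and set $u_s:=P_s\mathbf{1}_A$ (obtained as the limit of $P_sf_n$). Applying the reverse local log-Sobolev inequality to $f_n$ and letting $n\to\infty$, using that $x\log x$ vanishes at both endpoints $\{0,1\}$, one obtains the pointwise bound
\[
\Gamma(u_s)\leq \frac{u_s^2}{s}\log\frac{1}{u_s}, \qquad\text{i.e.}\qquad |\nabla u_s|_w \leq \frac{u_s}{\sqrt{s}}\sqrt{\log(1/u_s)}.
\]
The standing hypothesis $|\nabla f|\leq K|\nabla f|_w$ upgrades this weak-gradient estimate to a genuine local Lipschitz control. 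Savaré's contraction then propagates the $1$-Lipschitz character of $x\mapsto d(x,A)$ through the semigroup, and the concentration assumption $\lim_{r\to\infty}\alpha_\mu(r)=0$ forces $u_s$ to be uniformly small outside $A_r$ whenever $r\gtrsim r_\mu(t)+O(\sqrt{s})$; this is precisely where $\alpha_\mu$ and $r_\mu(\cdot)$ enter quantitatively.

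To conclude, combine the coarea inequality
\[
\int|\nabla u_s|_w\,d\mu \;\geq\; \int_0^1 \mu^+\bigl(\{u_s>\lambda\}\bigr)\,d\lambda,
\]
with the gradient bound above and Jensen's inequality applied to the concave function $x\mapsto x\sqrt{\log(1/x)}$ (handling the region $u_s\geq e^{-1/2}$ by truncation, using that $\int u_s d\mu=t$), to obtain
\[
\int_0^1 \mu^+\bigl(\{u_s>\lambda\}\bigr)\,d\lambda \;\lesssim\; \frac{K\,t\sqrt{\log(1/t)}}{\sqrt{s}}.
\]
Hence some level set $\{u_s>\lambda\}$ has boundary measure of the same order while enclosing $A$ up to a $\sqrt{s}$-neighborhood. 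Calibrating $\sqrt{s}\asymp r_\mu(t)/\sqrt{\log(1/t)}$, transferring the resulting bound from $\{u_s>\lambda\}$ to $A$ itself by a standard BV approximation (legitimate thanks to the strong Lipschitz regularity the comparison $|\nabla\cdot|\leq K|\nabla\cdot|_w$ confers on $u_s$), and collecting all absolute constants into $c=c(\alpha_\mu,K)$ yields the announced inequality $I_\mu(t)\geq c\, t\log(1/t)/r_\mu(t)$. The principal obstacle, and the sole reason for the extra hypothesis, is the mismatch between the weak gradient $\sqrt{\Gamma(f)}$ controlled by the reverse local inequalities and the strong slope $|\nabla f|$ governing enlargement through $A_r$: the comparison $|\nabla f|\leq K|\nabla f|_w$ is precisely tailored to bridge this gap, and the bulk of the verification consists in checking that each step of Ledoux's argument survives its introduction up to an absorbed factor of $K$.
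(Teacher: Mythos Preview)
Your proposal correctly identifies the three ingredients the paper singles out---the reverse local Poincar\'e and log-Sobolev inequalities from Theorem~\ref{th:local-RCD}, Savar\'e's contraction $\sqrt{\Gamma(P_sg)}\leq P_s(\sqrt{\Gamma(g)})$, and the hypothesis $|\nabla f|\leq K|\nabla f|_w$ bridging weak and strong gradients---and at that level it matches the paper, which itself gives no further detail and simply directs the reader to Ledoux \cite{ledoux11}.

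The detailed sketch, however, runs in the wrong direction. Your coarea--Jensen step produces an \emph{upper} bound
\[
\int_0^1 \mu^+\bigl(\{u_s>\lambda\}\bigr)\,d\lambda \;\leq\; \int |\nabla u_s|_w\,d\mu \;\lesssim\; \frac{t\sqrt{\log(1/t)}}{\sqrt{s}},
\]
from which you try to extract a \emph{lower} bound on $\mu^+(A)$. Finding a level set of small perimeter that approximates $A$ can only bound $I_\mu(t)$ from above, not below. Ledoux's argument is structured differently: the gradient contraction gives $\int\sqrt{\Gamma(P_s\mathbf 1_A)}\,d\mu\leq \mu^+(A)$ for every $s>0$; one then writes a time-integrated identity (for instance $a-\int(P_t\mathbf 1_A)^2\,d\mu = 2\int_0^t\!\int\Gamma(P_s\mathbf 1_A)\,d\mu\,ds$, or the entropic analogue), bounds the integrand by $\|\sqrt{\Gamma(P_s\mathbf 1_A)}\|_\infty\cdot\int\sqrt{\Gamma(P_s\mathbf 1_A)}\,d\mu\leq \|\sqrt{\Gamma(P_s\mathbf 1_A)}\|_\infty\cdot\mu^+(A)$, and uses the reverse Poincar\'e (respectively log-Sobolev) inequality to control the $L^\infty$ factor. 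The term at time $t$ is then estimated via concentration, which is where the $K/\sqrt{t}$-Lipschitz property of $P_t\mathbf 1_A$---and hence the hypothesis $|\nabla\cdot|\leq K|\nabla\cdot|_w$---actually enters. Rearranging yields the lower bound on $\mu^+(A)$. You have all the right pieces; the assembly via coarea and level-set transfer is the step that fails.
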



\section{Dimensional considerations and higher order operators} \label{sec: dimensions and higher orders}

In this final section we collect some remarks on the topic of this paper, in many places with some detail omitted for brevity. As a first remark we observe that the map $M$ could be considered as depending on more variables. This direction was already exploited in \cite{ivanisvili-volberg}
for the Houdr\'e-Kagan's inequality, but in a different manner.

\subsection{Generalised Local Inequalities}
Consider a Markov triple $(E,\mu,\Gamma)$.
Define by induction $L^0$, $L^1$, $L^2, \dots$ as the iterated of $L$: $L^0=I$ is the identity operator, $L^1=L$, $L^2=L \circ L$, $L^k=L^{k-1} \circ L$, $k=3, 4 \dots$.

For $k \geq 0$ and $I_0$,  $I_1, \dots I_{k}$ intervals of $\mathbb{R}$ and $I_{k+1}$ an interval of $[0,\infty)$ let
$M \colon I_0 \times I_1 \times \cdots \times I_{k+1} \to \mathbb{R}$ be of class $\mathcal{C}^2$. Proceeding as in Section \ref{sec:commutation}, for any differentiable function $g$, it is easy to check that
the map
$$
s \in [0,t] \mapsto H(s) = P_s M(P_{t-s}f,LP_{t-s}f,\dots,L^kP_{t-s}f,g(s)\Gamma(P_{t-s}f)), \qquad t >0, f \in \mathcal{A}
$$
satisfies
$$
H'(s) = P_s \left( \sum_{i,j=0}^{k+1}
 M_{ij}\Gamma(u_i,u_j)  + M_{k+1} \left[ 2g(s)\Gamma_2(u_0) +  g'(s) \Gamma(u_0) \right]  \right)
\qquad s \in (0,t), f\in \mathcal{A}
$$
where $u_i=L^iP_{t-s}f$, $i=0,\dots,k$, $u_{k+1} = g(s)\Gamma(u_0)$,
$$
M_{ij} = \frac{\partial^2 M}{\partial x_i \partial x_j} (P_{t-s}f,LP_{t-s}f,\dots,L^kP_{t-s}f,g(s)\Gamma(P_{t-s}f)) ,
$$
$$
M_{k+1} =  \frac{\partial M}{\partial x_{k+1}} (P_{t-s}f,LP_{t-s}f,\dots,L^kP_{t-s}f,g(s)\Gamma(P_{t-s}f)) .
$$
Then, the next Theorem is a generalisation of Theorem \ref{thm:local-commutation} that can be proved similarly (details are left to the reader).
Set
$$
A = \left(
\begin{array}{ccccc}
M_{00} + 2 M_{k+1} & M_{01} & \dots & M_{0k} & M_{0k+1} \\
M_{10} & M_{11} & \dots & M_{1k} & M_{1k+1} \\
\vdots & \vdots & & \vdots & \vdots \\
M_{k0} & M_{k1} & \dots & M_{kk} & M_{kk+1} \\
M_{k+10} & M_{k+11} & \dots & M_{k+1k} & M_{k+1k+1}  \\
\end{array}
\right) .
$$

\begin{thm} \label{thm:local-commutation-bis}
Let $(E,\mu,\Gamma)$ be a Markov triple.
Let $\rho \in \mathbb{R}$ and $M$ be of class $\mathcal{C}^2$ as above with $M_{k+1} \geq 0$ on its domain. For $t , \alpha \geq 0$, define
$g_\alpha(t) \coloneqq \frac{1-e^{-2\rho t}}{\rho} + \alpha e^{-2\rho t}$,  ($= 2t + \alpha$ when $\rho=0$).\\
$(i)$ Assume that for all $f \in \mathcal{A}$, it holds $\Gamma_2(f) \geq \rho \Gamma(f)$, that $M_{ij} \leq 0$ for all $i \neq j$ and that the matrix $A$ is positive semi-definite. Then, 
for all $t \geq 0$, all $f \in \mathcal{A}$ it holds
\begin{equation} \label{eq:main2}
    M(P_t f ,LP_{t}f,\dots,L^kP_{t}f, \alpha \Gamma(P_{t}f))
    \leq 
    P_t M( f,Lf,\dots,L^kf,g_\alpha(t) \Gamma(f)) .
\end{equation}
$(ii)$ Assume that there exists $t_o>0$ and $B \geq 0$ such that \eqref{eq:main2} holds for any $\alpha \geq B$ and any $t \in [0,t_o]$. Assume also that $M_{k+1}(x_o,0,\dots,0,y_o) \neq 0$ for some $x_o,y_o$. Then, for all $f \in \mathcal{A}$, it holds $\Gamma_2(f) \geq \rho \Gamma(f)$.
\end{thm}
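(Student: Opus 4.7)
The proof follows the same architecture as Theorem \ref{thm:local-commutation}, leveraging the identity for $H'(s)$ displayed just above the statement.

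\textbf{Part (i).} I would first use the hypothesis $\Gamma_2(u_0)\ge\rho\Gamma(u_0)$ combined with the identity $g_\alpha'(s)+2\rho g_\alpha(s)=2$ to obtain the lower bound
\[
M_{k+1}\bigl[2g_\alpha(s)\Gamma_2(u_0)+g_\alpha'(s)\Gamma(u_0)\bigr]\ \ge\ 2M_{k+1}\,\Gamma(u_0),
\]
which contributes exactly the extra $+2M_{k+1}$ in the $(0,0)$-entry of the matrix $A$. For the quadratic form $\sum_{i,j=0}^{k+1}M_{ij}\Gamma(u_i,u_j)$ I would apply Cauchy--Schwarz $\Gamma(u_i,u_j)\le\sqrt{\Gamma(u_i)\Gamma(u_j)}$ to each off-diagonal term. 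Here the standing assumption $M_{ij}\le 0$ for $i\ne j$ is essential: multiplying the upper bound by the non-positive coefficient $M_{ij}$ flips the inequality in the needed direction, giving $M_{ij}\Gamma(u_i,u_j)\ge M_{ij}\sqrt{\Gamma(u_i)\Gamma(u_j)}$. In the two-variable setting of Theorem \ref{thm:local-commutation} this sign assumption was unnecessary because any $2\times 2$ symmetric matrix and its ``$-|M_{xy}|$-off-diagonal'' counterpart share the same trace and determinant; this equivalence fails for larger matrices, motivating the explicit sign hypothesis here. With $v=(\sqrt{\Gamma(u_0)},\sqrt{\Gamma(u_1)},\dots,\sqrt{\Gamma(u_{k+1})})^T$ one collects
\[
H'(s)\ \ge\ P_s\!\left(v^T A\, v\right)\ \ge\ 0
\]
by PSD of $A$; therefore $H$ is non-decreasing on $[0,t]$ and $H(0)\le H(t)$ is precisely \eqref{eq:main2}, since $g_\alpha(0)=\alpha$.

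\textbf{Part (ii).} As in the converse direction of Theorem \ref{thm:local-commutation}, both sides of \eqref{eq:main2} coincide at $t=0$, so comparing their $t$-derivatives at $t=0^+$ yields a pointwise differential inequality. Unpacking via $\partial_t P_t f|_{t=0}=Lf$, $\partial_t\Gamma(P_t f)|_{t=0}=2\Gamma(f,Lf)=L\Gamma(f)-2\Gamma_2(f)$, the diffusion formula for $LM$ applied at $(f,Lf,\dots,L^k f,\alpha\Gamma(f))$, and $g_\alpha'(0)=2-2\alpha\rho$, the first-order contributions $M_i\cdot L^{i+1}f$ ($i=0,\dots,k$) cancel on both sides, and rearrangement leaves a pointwise relation of the form
\[
-2\alpha\,M_{k+1}\,\Gamma_2(f)\ \le\ \sum_{i,j=0}^{k+1} M_{ij}\,\Gamma(f_i,f_j)\ +\ 2(1-\alpha\rho)\,M_{k+1}\,\Gamma(f),
\]
where $f_i=L^i f$ for $i\le k$ and $f_{k+1}=\alpha\Gamma(f)$. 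Testing with $f=x_o+\varepsilon g$ for a non-constant $g\in\mathcal{A}$, fixing $\alpha\varepsilon^2\Gamma(g)(x_*)=y_o$ pointwise at some $x_*\in E$ with $\Gamma(g)(x_*)>0$, and sending $\varepsilon\to 0$ (equivalently $\alpha\to\infty$), the arguments of the partial derivatives of $M$ converge to $(x_o,0,\dots,0,y_o)$; the cross-terms $M_{ij}\Gamma(f_i,f_j)$ involving any intermediate index $i\in\{1,\dots,k\}$ carry extra positive powers of $\varepsilon$ (because $L^i f=\varepsilon L^i g$ for $i\ge 1$) and vanish in the limit, while the surviving coefficient $M_{k+1}(x_o,0,\dots,0,y_o)$ is non-zero by hypothesis. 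Division then delivers $\Gamma_2(g)(x_*)\ge\rho\Gamma(g)(x_*)$, and since $x_*$ and $g$ were arbitrary the Bakry--\'Emery condition follows.

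\textbf{Main obstacle.} The delicate step is the bookkeeping in (ii): one must track, term by term, the $\varepsilon$-scaling of every contribution produced by the diffusion formula for $LM$. The cross-terms with an intermediate index are harmless thanks to $L^i(x_o)=0$ for $i\ge 1$, but the terms only involving $f_0$ and $f_{k+1}$ require the same careful scaling analysis as in the proof of Theorem \ref{thm:local-commutation}(ii). As already observed right after the statement of Theorem \ref{thm:local-commutation-intro}, the PSD assumption on $A$ plays no role in the converse direction.
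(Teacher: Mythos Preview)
Your proposal is correct and follows precisely the approach the paper intends: the paper does not actually give a proof of this theorem, stating only that it ``can be proved similarly (details are left to the reader)'', and you have supplied those details faithfully. Your observation that the sign hypothesis $M_{ij}\le 0$ for $i\ne j$ replaces the $2\times 2$ trace--determinant trick used in Theorem~\ref{thm:local-commutation} is exactly the reason this extra assumption appears, and your handling of Part~(ii) via the same $f=x_o+\varepsilon g$ scaling, noting that the intermediate slots $L^i f=\varepsilon L^i g$ contribute vanishing terms, is the natural extension of the paper's argument. One minor remark: in Part~(i) you correctly use only the basic $BE(\rho,\infty)$ condition rather than the enhanced form~\eqref{eq:enhanced}, which is consistent with the fact that the matrix $A$ here carries no $\frac{M_{k+1}}{2y}$ correction in its $(k{+}1,k{+}1)$-entry, unlike the $2\times 2$ case.
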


\subsection{Ledoux-Houdr\'e-Kagan's inequality}
Houdr\'e and Kagan \cite{houdre-kagan} proved the following bound
\[
\sum_{k=1}^{2N}\frac{(-1)^{k+1}}{k!}\int_{\mathbb{R}^n}|\nabla^kf|^2d\gamma_n \leq Var_{\gamma_n}(f) \leq \sum_{k=1}^{2N-1}\frac{(-1)^{k+1}}{k!}\int_{\mathbb{R}^n}|\nabla^kf|^2d\gamma_n
\]
that holds for $f$ sufficiently smooth and where $|\nabla^kf|$ stands for the sum of all partial derivatives of $f$ order $k$ and $\gamma_n$ is the standard Gaussian measure in $\mathbb{R}^n$.
Such a bound is related to the Ornstein-Uhlenbeck semi-group. Ledoux \cite{ledoux95}  generalized Houdr\'e-Kagan's inequality in the Markov diffusion setting through an induction construction that we now briefly describe.

Let $\lambda_i$, $i\geq 1$ be a sequence of non-zero real numbers  and $\pi_k=\lambda_1...\lambda_k$. Then, starting from $\Gamma=Q_1(\Gamma)$ define the operators $Q_i(\Gamma)$ for each $i\geq 1$ as follows: 
\[
Q_{i+1}(\Gamma)(g)= -\lambda_iQ_i(\Gamma)(g) +\frac{1}{2}LQ_i(\Gamma)(g)-Q_i(\Gamma)(g,Lg) .
\]
This construction allows to replace $|\nabla^kf|^2$ with $Q_k(\Gamma)(f)$, under certain conditions. In particular, proceeding like in the Gaussian case Ledoux proved the following representation formula
\[
Var_\mu(f)= \sum_{i=1}^{n-1} \frac{(-1)^{i+1}}{\pi_i}\int Q_k(\Gamma)(f)d\mu-\frac{2(-1)^n}{\pi_{n-1}}\int_0^{+\infty}\int Q_n(\Gamma)(P_tf)d\mu dt 
\]
that holds for all $f$ sufficiently smooth and that generalizes Houdr\'e-Kagan's inequality.

Using Ledoux's construction, we may introduce some appropriate map $M$ that will reveal to be monotone along the flow, therefore recovering Houdr\'e-Kagan's inequality.
Notice that the operators $Q_k(\Gamma)$ are specifically adapted to the case of the variance as the calculations below show. In particular they are not adapted for other functionals, like the entropy, say, see however \cite{ledoux95} for some result in that direction.

As usual the starting point is to compute the derivative of a certain map. In all what follows functions are supposed to be smooth enough.
Notice that $M$ below is, like in the previous section,
a function of more than 2 variables, but with different increments.

\begin{lem}
Let $g_1,..., g_n:[0,t]\to [0,\infty)$ and define
\[
H(s) = P_{s}\big(M(P_{t-s}f,g_1(s)\Gamma(P_{t-s}f),...,g_n(s)Q_n(\Gamma)(P_{t-s}f))\big) .
\]
Then
\begin{align*}
H'(s) 
& =  
P_{s} \left((M_{xx}+M_{y_1}[g_1'+2g_1\lambda_1])\Gamma(g)+\sum_{i=1}^n2M_{xy_i}\Gamma(h,g_iQ_i(\Gamma)(h)) \right.\\
& \quad +\sum_{1\leq i<j \leq n}2M_{y_iy_j}\Gamma(g_iQ_i(\Gamma)(h),g_jQ_j(\Gamma)(h)) + \sum_{i=1}^nM_{y_iy_i} \Gamma(g_iQ_i(\Gamma)(h)) \\
& \quad +
\left. \sum_{i=1}^{n-1}Q_{i+1}(\Gamma)(h)[2M_{y_i}g_i+2g_{i+1}M_{y_{i+1}}\lambda_{i+1}+g_{i+1}'M_{y_{i+1}}]
+ 2M_{y_n}g_nQ_{n+1}(\Gamma)(h) \right).
\end{align*}
\end{lem}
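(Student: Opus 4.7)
The plan is to differentiate $H(s)$ directly inside the semigroup, using the standard identity $H'(s) = P_s(LM + \partial_s M)$ with $M$ understood as the composition $M = M(h, g_1(s)Q_1(\Gamma)(h),\dots,g_n(s)Q_n(\Gamma)(h))$ where $h = P_{t-s}f$, and then to absorb all time derivatives of the $Q_i(\Gamma)(h)$ into higher order $Q_{i+1}(\Gamma)(h)$ terms via the recursion defining the $Q_i$.

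First I would expand $LM$ by the diffusion property exactly as in the proof of Theorem \ref{thm:local-commutation}, obtaining the first order contributions $M_x\,Lh + \sum_i M_{y_i}\,g_i\,LQ_i(\Gamma)(h)$ together with the full second order bilinear cluster $M_{xx}\Gamma(h) + 2\sum_i M_{xy_i} g_i\Gamma(h,Q_i(\Gamma)(h)) + \sum_{i,j} M_{y_iy_j}\,g_i g_j\,\Gamma(Q_i(\Gamma)(h),Q_j(\Gamma)(h))$. In parallel, since $\partial_s h = -Lh$ and each $Q_i(\Gamma)(\cdot)$ is a quadratic form in its argument, the chain rule gives $\partial_s Q_i(\Gamma)(h) = -2 Q_i(\Gamma)(h,Lh)$, and hence
$$\partial_s M = -M_x\, Lh + \sum_i M_{y_i}\bigl[g_i'(s) Q_i(\Gamma)(h) - 2 g_i(s)\, Q_i(\Gamma)(h,Lh)\bigr].$$
The $M_x Lh$ terms cancel when one adds $LM + \partial_s M$, which is the first visible simplification.

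The key algebraic step is to invoke the defining recursion $Q_{i+1}(\Gamma)(h) = -\lambda_i Q_i(\Gamma)(h) + \tfrac12 LQ_i(\Gamma)(h) - Q_i(\Gamma)(h,Lh)$ to rewrite
$$-2 g_i(s)\, Q_i(\Gamma)(h,Lh) = 2g_i(s)\lambda_i Q_i(\Gamma)(h) - g_i(s) LQ_i(\Gamma)(h) + 2 g_i(s) Q_{i+1}(\Gamma)(h).$$
Multiplying by $M_{y_i}$ and summing in $i$, the $-M_{y_i} g_i(s) LQ_i(\Gamma)(h)$ contribution exactly cancels the $M_{y_i} g_i(s) LQ_i(\Gamma)(h)$ piece coming from $LM$. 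What remains inside $P_s$ is the bilinear cluster together with the single-index sum $\sum_i M_{y_i}\bigl[g_i'(s) + 2g_i(s)\lambda_i\bigr] Q_i(\Gamma)(h) + 2\sum_i M_{y_i} g_i(s)\, Q_{i+1}(\Gamma)(h)$.

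Finally I would rearrange to match the stated form. The bilinear cluster is split into the $M_{xy_i}$, the diagonal $M_{y_iy_i}$, and the off-diagonal $M_{y_iy_j}$ with $i<j$ pieces (using bilinearity and symmetry of $\Gamma$). In the single-index sum, the $i=1$ term combines with $M_{xx}\Gamma(h)$ since $Q_1(\Gamma)(h)=\Gamma(h)$, giving the coefficient $M_{xx} + M_{y_1}[g_1' + 2g_1\lambda_1]$ of $\Gamma(h)$; for $i = 2,\dots,n$ the coefficient of $Q_i(\Gamma)(h)$ is naturally paired with the $Q_{i+1}(\Gamma)(h)$ contribution from the previous index by shifting $i \mapsto i+1$, producing the bracket $2 M_{y_i} g_i + 2 g_{i+1} M_{y_{i+1}} \lambda_{i+1} + g_{i+1}' M_{y_{i+1}}$ for $i=1,\dots,n-1$; and the top of the ladder leaves exactly the boundary term $2 M_{y_n} g_n Q_{n+1}(\Gamma)(h)$. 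The only delicate point, and the step most prone to indexing errors, is keeping track of this reindexing and ensuring that the $LQ_i(\Gamma)(h)$ cancellation is complete; everything else is a mechanical expansion.
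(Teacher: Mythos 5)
Your proposal is correct and follows essentially the same route as the paper's proof: differentiate $H(s)=P_s\Phi(s)$ via $H'(s)=P_s(L\Phi+\partial_s\Phi)$, expand $L\Phi$ by the diffusion property, observe the $M_xLh$ and $M_{y_i}g_iLQ_i(\Gamma)(h)$ cancellations, and invoke the recursion for $Q_{i+1}$ to trade $LQ_i(\Gamma)(h)-2Q_i(\Gamma)(h,Lh)$ for $2[Q_{i+1}(\Gamma)(h)+\lambda_iQ_i(\Gamma)(h)]$ before the final reindexing. The paper presents this slightly more compactly (writing $H'(s)$ in one step rather than separating $L\Phi$ and $\partial_s\Phi$), but the algebra and the key use of the $Q_i$ recursion are the same.
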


\begin{proof} 
Set $h=P_{t-s}(f)$. Then,
\[
H'(s)= P_{s}\big(  LM- M_xLh -\sum_{i=1}^n[2g_i(s)M_{y_i}Q_i(\Gamma)(h,Lh)-g'_i(s)M_{y_i}Q_i(\Gamma)(h)]                               \big) .
\]
Now 
\begin{align*}
LM  =LM(h,...,g_nQ_n(h))  &= M_xLh+\sum_{i=1}^n g_i(s)M_{y_i}LQ_i(\Gamma)(h) + M_{xx}\Gamma(h) 
+\sum_{i=1}^n2M_{xy_i}\Gamma(h,g_iQ_i(\Gamma)(h)) \\
& \quad
+\sum_{i<j}2M_{y_iy_j}\Gamma(g_iQ_i(\Gamma)(h),g_jQ_j(\Gamma)(h)) 
+ \sum_{i=1}^nM_{y_iy_i} \Gamma(g_iQ_i(\Gamma)(h)) .
\end{align*}
Thus
\begin{align*}
H'(s)
& =  
P_s \left( M_{xx}\Gamma(g)+\sum_{i=1}^n2M_{xy_i}\Gamma(h,g_iQ_i(\Gamma)(h))+\sum_{i<j}2M_{y_iy_j}\Gamma(g_iQ_i(\Gamma)(h),g_jQ_j(\Gamma)(h)) \right. \\
& \quad + \left. \sum_{i=1}^nM_{y_iy_i} \Gamma(g_iQ_i(\Gamma)(h))
+\sum_{i=1}^nM_{y_i}g_i[LQ_i(\Gamma)(h)-2Q_i(\Gamma)(h,Lh)] 
 \sum_{i=1}^ng'_iM_{y_i}Q_i(\Gamma)(h) \right).
\end{align*}
It follows by definition of $Q_i$ that
\begin{align*}
H'(s)
& =  
P_s \left(
M_{xx}\Gamma(g)
+\sum_{i=1}^n2M_{xy_i}\Gamma(h,g_iQ_i(\Gamma)(h))
+\sum_{i<j}2M_{y_iy_j}\Gamma(g_iQ_i(\Gamma)(h),g_jQ_j(\Gamma)(h)) \right. \\
& \quad + 
\sum_{i=1}^nM_{y_iy_i} \Gamma(g_iQ_i(\Gamma)(h))+
+\sum_{i=1}^n2M_{y_i}g_i[Q_{i+1}(\Gamma)(h)+\lambda_iQ_i(\Gamma)(h)] +
\left. \sum_{i=1}^ng'_iM_{y_i}Q_i(\Gamma)(h) \right)
\end{align*}
from which the expected result follows since $Q_1(\Gamma)=\Gamma$ by rearranging the terms
\end{proof}

Now consider
\[
M(x,y_1,...,y_n)=-x^2+\sum_{i=1}^{n}\frac{(-1)^{i+1}}{\pi_i}y_i
\]
where $\pi_i$'s are defined as above via the $\lambda_i$'s, which from now on we consider to be positive. By construction observe that $M_{y_{i+1}}\lambda_{i+1}=-M_{y_i}$ for each $i$. Therefore, if $g_1 \equiv g_2 \equiv \dots \equiv g_n \equiv 1$ in the above Lemma it holds
\[
H'(s)=P_s \left( 2M_{y_n}Q_{n+1}(\Gamma)(h) \right)
=
2\frac{(-1)^{n+1}}{\pi_n} P_s \left( Q_{n+1}(\Gamma)(P_{t-s}f) \right).
\]
The next proposition, that deal with local/global inequalities, immediately follows (for the inequality involving $\mu$ one needs to consider $H(t)=\int M(P_tf,\Gamma(P_tf),...,Q_n(\Gamma)(P_tf))d\mu$ instead, and compute similarly $H'(t)=2\frac{(-1)^{n+1}}{\pi_n} \int Q_{n+1}(\Gamma)(P_t f) d\mu$, details are left to the reader).

\begin{prop} 
Let $n\in \mathbb{N}$. \\
$\bullet$ Assume that $Q_{n+1}(\Gamma)(f)\geq 0$ for all $f$ smooth enough. Then \\
$(i)$ If $n$ is odd 
\[
P_t(M(f,\Gamma(f),...,Q_n(\Gamma)(f))\geq M(P_tf,\Gamma(P_tf),...,Q_n(\Gamma)(P_tf))
\]\par
$(ii)$ If $n$ is even 
\[
P_t(M(f,\Gamma(f),...,Q_n(\Gamma)(f))\leq M(P_tf,\Gamma(P_tf),...,Q_n(\Gamma)(P_tf)).
\]
$\bullet$ Assume that for all $f$ smooth enough $\int Q_{n+1}(\Gamma)(f)d\mu \geq 0$.
Then \par
$(i)$ If $n$ is odd 
\[
\int M(f,\Gamma(f),...,Q_n(\Gamma)(f))d\mu\geq M(\int f,0,...,0)d\mu
\]\par
$(ii)$ If $n$ is even 
\[
\int M(f,\Gamma(f),...,Q_n(\Gamma)(f))d\mu\leq M(\int f,0,...,0)d\mu.
\]
\end{prop}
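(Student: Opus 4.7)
The plan is to invoke the lemma established immediately above, applied to the specific function $M(x,y_1,\dots,y_n) = -x^2 + \sum_{i=1}^n \frac{(-1)^{i+1}}{\pi_i} y_i$ with the choice $g_1 \equiv \cdots \equiv g_n \equiv 1$. The crucial structural observation, already flagged in the text preceding the proposition, is the telescoping identity $\lambda_{i+1} M_{y_{i+1}} = -M_{y_i}$ for $1 \leq i \leq n-1$, together with $M_{xx} + 2\lambda_1 M_{y_1} = -2 + 2\lambda_1 \cdot \frac{1}{\lambda_1} = 0$. Since $M$ is affine in each $y_i$, every mixed second partial derivative and every $M_{y_i y_i}$ vanishes, so the general formula for $H'(s)$ from the lemma collapses when $g_i'=0$: the bracket $M_{xx}+M_{y_1}[g_1'+2g_1\lambda_1]$ is zero, each bracket $Q_{i+1}(\Gamma)(h)[2M_{y_i}g_i+2g_{i+1}M_{y_{i+1}}\lambda_{i+1}+g_{i+1}'M_{y_{i+1}}]$ for $1\leq i\leq n-1$ vanishes by the telescoping relation, and only the boundary term survives, yielding $H'(s) = \frac{2(-1)^{n+1}}{\pi_n} P_s(Q_{n+1}(\Gamma)(P_{t-s}f))$, exactly as displayed in the excerpt.

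Under the pointwise hypothesis $Q_{n+1}(\Gamma)(\cdot) \geq 0$, positivity of $P_s$ then forces $H'(s)$ to carry the sign of $(-1)^{n+1}$. Hence for $n$ odd, $H$ is non-decreasing on $[0,t]$, so that $H(0) \leq H(t)$ unfolds into the claimed inequality $M(P_tf,\Gamma(P_tf),\dots,Q_n(\Gamma)(P_tf)) \leq P_t M(f,\Gamma(f),\dots,Q_n(\Gamma)(f))$; for $n$ even, $H$ is non-increasing and the inequality reverses. This establishes items $(i)$ and $(ii)$ of the first bullet.

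For the integrated statements I would instead study $\widetilde H(t) := \int M(P_tf,\Gamma(P_tf),\dots,Q_n(\Gamma)(P_tf))\,d\mu$ and differentiate directly in $t$, without the outer semigroup wrapping. Since $\int Lg\,d\mu = 0$, the analogue of the lemma becomes simpler: bilinearity of each $Q_i(\Gamma)$ together with the defining recursion for $Q_{i+1}$ gives $\int Q_i(\Gamma)(P_tf,LP_tf)\,d\mu = -\lambda_i\int Q_i(\Gamma)(P_tf)\,d\mu - \int Q_{i+1}(\Gamma)(P_tf)\,d\mu$. Summing against the coefficients $\frac{(-1)^{i+1}}{\pi_i}$ produces the same telescoping, reducing $\widetilde H'(t)$ to a scalar multiple of $\int Q_{n+1}(\Gamma)(P_tf)\,d\mu$ whose sign is again determined by the parity of $n$. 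Under the weaker hypothesis $\int Q_{n+1}(\Gamma)(g)\,d\mu\geq 0$ applied to $g=P_tf$, $\widetilde H$ is then monotone on $[0,\infty)$, and passing to the limit $t\to\infty$ by ergodicity (so that $P_tf \to \int f\,d\mu$ and $\int Q_i(\Gamma)(P_tf)\,d\mu \to 0$) compares $\widetilde H(0)=\int M(f,\Gamma(f),\dots,Q_n(\Gamma)(f))\,d\mu$ with $\widetilde H(\infty)=M(\int f\,d\mu,0,\dots,0)$, which is the statement of the second bullet.

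The main obstacle is not the algebraic telescoping, which is forced by the choice of coefficients $\frac{(-1)^{i+1}}{\pi_i}$ and the construction of the $Q_i$, but rather the convergence step in the integrated case: one must justify that each $\int Q_i(\Gamma)(P_tf)\,d\mu \to 0$ as $t\to\infty$, which requires some regularity/integrability on $f$ and is the technical point subsumed by the remark ``details are left to the reader''.
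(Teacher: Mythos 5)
Your proposal matches the paper's proof: the pointwise statement is the preceding lemma applied to $M(x,y_1,\dots,y_n)=-x^2+\sum_i \frac{(-1)^{i+1}}{\pi_i}y_i$ with $g_i\equiv 1$, where the telescoping $\lambda_{i+1}M_{y_{i+1}}=-M_{y_i}$ and $M_{xx}+2\lambda_1 M_{y_1}=0$ leave only $H'(s)=\frac{2(-1)^{n+1}}{\pi_n}P_s\bigl(Q_{n+1}(\Gamma)(P_{t-s}f)\bigr)$, and the integrated statement comes from differentiating $\widetilde H(t)=\int M(P_tf,\Gamma(P_tf),\dots)\,d\mu$ directly, exactly as you describe. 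One caution on a point you leave implicit with the phrase ``sign is again determined by the parity of $n$'': the sign of $\widetilde H'(t)$ is \emph{opposite} to that of the local $H'(s)$ for the same parity, because the single $\frac{d}{dt}$ hits only $P_t$, rather than producing the $+L$ from the outer $P_s$ and $-L$ from the inner $P_{t-s}$ of the sandwiched version; a direct computation using the recursion and $\int L(\cdot)\,d\mu=0$ gives $\widetilde H'(t)=-\frac{2(-1)^{n+1}}{\pi_n}\int Q_{n+1}(\Gamma)(P_tf)\,d\mu$, so for $n$ odd $\widetilde H$ is non-\emph{increasing} (yielding $\widetilde H(0)\geq\widetilde H(\infty)$, consistent with the proposition) whereas the local $H$ is non-decreasing, and the paper's inline formula for $\widetilde H'(t)$ appears to have dropped this minus sign.
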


As explained by Ledoux \cite{ledoux95}, the proposition gives back the inequality 
of Houdr\'e and Kagan taking $\lambda_k=k$ for each $k$ (recall that the Ornstein-Uhlenbeck operator has $0,-1,-2,...$ as eigenvalues (related to Hermite polynomials)). More generally the same holds when $\lambda_i>0$ are the proper eigenvalues of the discrete spectrum of the operator $(-L)$ in increasing order. We refer the reader to \cite[Corollary 2]{ledoux95} for more details.


\subsection{Interpolation under $BE(\rho,n)$}

In this section we deal with the more general condition $BE(\rho,n)$. Recall that it asserts that
$$
\Gamma_2(f) \geq \rho \Gamma(f) + \frac{1}{n} (Lf)^2 
$$
for any $f$ smooth enough. Let us deal with $E=\mathbb{R}^n$ and a diffusion operator for simplicity. Most of the remarks to come would however hold for a general Markov triple.

Our starting point is again a formula for the derivative of the map $t \mapsto H(t) = \int M(P_tf, \Gamma(P_tf))d\mu$.

\begin{lem}
Assume that the map $M \colon I \times \mathbb{R}_+ \to \mathbb{R}$ is of class $\mathcal{C}^2$ and that $M_y \geq 0$. Assume also that condition $BE(\rho,n)$ holds for some $\rho \in \mathbb{R}$ and $n \in (1,\infty]$. Then,
 \begin{align}  \label{eq:H'bis}
& H'(t)
\leq
- \int 
\left[ M_{xx} + \frac{2 \rho n}{n-1} M_y \right]\Gamma(g) 
+ \frac{2n+1}{n-1} M_{xy} \Gamma(g,\Gamma(g)) 
+ \frac{n}{n-1} M_{yy}\Gamma(\Gamma(g)) 
d\mu \\
& \quad -
\frac{2}{n-1} \left( \int M_{xxy} \Gamma(g)^2 + 2M_{xyy} \Gamma(g) \Gamma(g,\Gamma(g))
 + M_{yyy} \Gamma(g,\Gamma(g))^2  + M_{yy} \Gamma(g,\Gamma(g,\Gamma(g))) d\mu 
\right)\nonumber 
 \end{align}   
 where $g=P_tf$ and $M_x$ stands for $M_x(g,\Gamma(g))$ and similarly for the other derivatives.
\end{lem}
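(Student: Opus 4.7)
The plan is to mimic the proof of Theorem \ref{thm:2variables}, but to bring in the dimensional piece $\tfrac{1}{n}(Lg)^2$ of the $BE(\rho,n)$ hypothesis and then recycle the resulting $(Lg)^2$-integral via a further integration by parts. With $g=P_tf$ and $v=\Gamma(g)$, the first step reproduces verbatim the base identity already derived in Theorem \ref{thm:2variables}:
$$
H'(t)=-\int\bigl[M_{xx}\Gamma(g)+2M_{xy}\Gamma(g,v)+M_{yy}\Gamma(v)+2M_y\Gamma_2(g)\bigr]d\mu,
$$
obtained by differentiating under the integral, using invariance of $\mu$, the identity $2\Gamma(g,Lg)=Lv-2\Gamma_2(g)$, and two integration by parts against $L$.

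Applying now $\Gamma_2(g)\geq \rho\Gamma(g)+\tfrac{1}{n}(Lg)^2$ (weighted by $M_y\geq 0$ and integrated) inserts an extra $\tfrac{2}{n}\int M_y(Lg)^2 d\mu$ term of no obvious sign. The crux is to integrate this term by parts once more: writing $\int M_y(Lg)^2 d\mu=-\int\Gamma(M_yLg,g)d\mu$, expanding by the $\Gamma$-Leibniz rule as $Lg\,\Gamma(M_y,g)+M_y\Gamma(Lg,g)$, using $\Gamma(M_y,g)=M_{xy}\Gamma(g)+M_{yy}\Gamma(g,v)$ and $\Gamma(Lg,g)=\tfrac12 Lv-\Gamma_2(g)$, and integrating by parts once more against each factor still containing $L$, one arrives at the \emph{exact} identity
\begin{align*}
\int M_y(Lg)^2 d\mu &=\int M_y\Gamma_2(g)d\mu + \int\bigl[\tfrac{3}{2}M_{xy}\Gamma(g,v)+\tfrac{1}{2}M_{yy}\Gamma(v)\bigr]d\mu\\
&\quad+\int\bigl[M_{xxy}\Gamma(g)^2+2M_{xyy}\Gamma(g)\Gamma(g,v)+M_{yyy}\Gamma(g,v)^2+M_{yy}\Gamma(g,\Gamma(g,v))\bigr]d\mu.
\end{align*}

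Substituting this into $\int M_y\Gamma_2(g)d\mu\geq \rho\int M_y\Gamma(g)d\mu+\tfrac1n\int M_y(Lg)^2 d\mu$ and solving for $\int M_y\Gamma_2(g)d\mu$ produces the factor $\tfrac{n}{n-1}=(1-1/n)^{-1}$ in front of the $\rho\Gamma(g)$ contribution and of all the ``residual'' integrands above. Reinjecting the resulting lower bound for $\int 2M_y\Gamma_2(g)d\mu$ into the base formula, and combining with the coefficients $2$ and $1$ already present in front of $M_{xy}\Gamma(g,v)$ and $M_{yy}\Gamma(v)$, yields the announced coefficients $\tfrac{2n+1}{n-1}=2+\tfrac{3}{n-1}$, $\tfrac{n}{n-1}=1+\tfrac{1}{n-1}$, and $\tfrac{2\rho n}{n-1}$ for $M_y\Gamma(g)$, together with the third-derivative and $\Gamma(g,\Gamma(g,v))$ contributions carrying the prefactor $-\tfrac{2}{n-1}$, exactly as in \eqref{eq:H'bis}.

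The main obstacle is the book-keeping in the second integration by parts: the three nested applications of the $\Gamma$-Leibniz rule each generate several summands, and one must verify that they recombine precisely into the six integrands above with coefficients $(1,2,1,1,\tfrac{3}{2},\tfrac{1}{2})$ and nothing else. Once this identity is in hand, the rest of the argument is a purely algebraic collection of terms.
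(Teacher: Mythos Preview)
Your argument is correct and follows essentially the same route as the paper's own proof. The only organizational difference is the quantity on which you close the self-referential inequality: you derive an exact identity expressing $\int M_y(Lg)^2\,d\mu$ in terms of $\int M_y\Gamma_2(g)\,d\mu$ plus residuals and then solve for $\int M_y\Gamma_2(g)\,d\mu$, whereas the paper keeps $\int 2M_y\,\Gamma(g,Lg)\,d\mu$ as the unknown, uses $BE(\rho,n)$ and a single integration by parts on $\int M_y(Lg)^2\,d\mu$ to close an inequality for that quantity, and defers the two final integrations by parts (on $\int M_{xy}\Gamma(g)Lg\,d\mu$ and $\int M_{yy}\Gamma(g,\Gamma(g))Lg\,d\mu$) to the end. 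The underlying computations and the resulting coefficients are identical; your packaging is arguably a bit cleaner since it isolates a single exact identity before any inequality is used.
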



\begin{proof}
We start as in Section \ref{sec:commutation}. Using an integration by parts formula, it holds
 \begin{align*} 
H'(t)
 & =
 \int M_x Lg + 2M_y  \Gamma(g,Lg) d\mu \nonumber  =
- \int M_{xx} \Gamma(g) + M_{xy} \Gamma(g,\Gamma(g)) d\mu + \int 2M_y  \Gamma(g,Lg) d\mu .
 \end{align*}
Our aim is now to analyse the term $\int 2M_y  \Gamma(g,Lg) d\mu$.
Since $M_y \geq 0$
and by definition of $\Gamma_2$, an integration by parts and the $BE(\rho,n)$ Condition,
it holds
\begin{align*}
    \int 2M_y  \Gamma(g,Lg) d\mu
    & =
\int M_y L \Gamma(g) d\mu - \int 2M_y  \Gamma_2(g) d\mu \\
& \leq 
- \int M_{xy} \Gamma(g,\Gamma(g)) + M_{yy} \Gamma(\Gamma(g))
- 2 \rho \int M_y \Gamma(g) d\mu - \frac{2}{n} \int M_y (Lg)^2 d\mu .
\end{align*}
We now focus on the last term in the right hand side. Using again an integration by part, it follows
\begin{align*}
\int M_y (Lg)^2 d\mu 
& =
\int (M_y Lg) Lg d\mu \\
& =
- \int \Gamma(M_y Lg, g) d\mu \\
& =
- \int M_{xy} \Gamma(g) Lg + M_{yy}\Gamma(g,\Gamma(g))Lg + M_y \Gamma(g,Lg) d\mu  .
\end{align*}
Plugging this expression above, we obtain a closed inequality for $\int M_y \Gamma(g,Lg)$, namely
\begin{align*}
    \int 2M_y  \Gamma(g,Lg) d\mu
& \leq 
- \int M_{xy} \Gamma(g,\Gamma(g)) + M_{yy} \Gamma(\Gamma(g))
- 2 \rho \int M_y \Gamma(g) d\mu \\
& \quad 
+\frac{2}{n} \left(  \int M_{xy} \Gamma(g) Lg + M_{yy}\Gamma(g,\Gamma(g))Lg + M_y \Gamma(g,Lg) d\mu \right) .
\end{align*}
Hence,
\begin{align*}
\left(1 - \frac{1}{n} \right) \int 2 M_y  \Gamma(g,Lg) d\mu
& \leq 
- \int M_{xy} \Gamma(g,\Gamma(g)) + M_{yy} \Gamma(\Gamma(g))
- 2 \rho \int M_y \Gamma(g) d\mu \\
& \quad 
+\frac{2}{n} \left(  \int M_{xy} \Gamma(g) Lg + M_{yy}\Gamma(g,\Gamma(g))Lg d\mu \right) .
\end{align*}
Inserting this in the expression of $H'$, we obtain
\begin{align*} 
H'(t)
 & \leq
- \int M_{xx} \Gamma(g) + M_{xy} \Gamma(g,\Gamma(g)) d\mu \\
& \quad 
- \frac{n}{n-1} \left( \int M_{xy} \Gamma(g,\Gamma(g)) + M_{yy} \Gamma(\Gamma(g))
+ 2 \rho \int M_y \Gamma(g) d\mu \right) \\
& \quad + 
\frac{2}{n-1} \left(  \int M_{xy} \Gamma(g) Lg + M_{yy}\Gamma(g,\Gamma(g))Lg d\mu \right) .
 \end{align*}
 It remains to analyse the additional terms
$\int M_{xy} \Gamma(g) Lg d\mu$ and $\int M_{yy}\Gamma(g,\Gamma(g))Lg d\mu$. We use again twice an integration by part formula to obtain
\begin{align*}
 \int M_{xy} \Gamma(g) Lg d\mu
 & =
 - \int M_{xxy} \Gamma(g)^2 + M_{xyy} \Gamma(g) \Gamma(g,\Gamma(g)) + M_{xy} \Gamma(g,\Gamma(g)) d\mu
\end{align*}
and
\begin{align*}
 \int M_{yy} \Gamma(g,\Gamma(g)) Lg d\mu
 & =
 - \int M_{xyy} \Gamma(g)\Gamma(g,\Gamma(g)) + M_{yyy} \Gamma(g,\Gamma(g))^2 + M_{yy} \Gamma(g,\Gamma(g,\Gamma(g))) d\mu .
\end{align*}
Rearranging the various terms, we get the expected  formula for $H'$.
\end{proof}

\begin{rem}
    Formula \eqref{eq:H'bis} does not have a clear interpretation in terms of a matrix $A$. However, under the assumption that 
$
A
=
\left( \begin{array}{cc}
M_{xx}  + \frac{2\rho n}{n-1}M_y & \frac{2n+1}{2(n-1)}M_{xy} \\ 
\frac{2n+1}{2(n-1)}M_{xy} & \frac{n}{n-1}M_{yy}  
\end{array}\right)
$
is positive semi-definite on $I \times \mathbb{R}_+$, following previous arguments, we can conclude that
$$
H'(t) \leq  -
\frac{2}{n-1} \left( \int M_{xxy} \Gamma(g)^2 + 2M_{xyy} \Gamma(g) \Gamma(g,\Gamma(g))
 + M_{yyy} \Gamma(g,\Gamma(g))^2  + M_{yy} \Gamma(g,\Gamma(g,\Gamma(g))) d\mu 
\right) .
$$
    Consider for example $M(x,y)=-x^2 + \frac{n-1}{\rho n}y$ on
$\mathbb{R} \times \mathbb{R}_+$. Then the matrix $A$ has only 
vanishing entries, and $M_{xxy}, M_{xyy}, M_{yyy}$ and $M_{yy}$ are all the null function so that $H'(t) \leq 0$, leading to the following well-known Poincar\'e inequality under $BE(\rho,n)$, $\rho >0$, $n >1$,
$$
\Var_\mu(f^2) \leq \frac{n-1}{n\rho} \int \Gamma(f) d\mu 
$$
valid for all $f$ smooth enough. For example, on the sphere of $\mathbb{R}^{n+1}$ for which $\rho=n-1$ (i.e. the $BE(n-1,n)$ condition is satisfied), the Poincar\'e constant reduces to $\frac{1}{n}$ which is known to be optimal. 
The above Poincar\'e inequality is Lichnerowicz' eigenvalue comparison theorem: the first eigenvalue of the Laplace operator on any $n$-dimensional compact Riemanian manifold with Ricci curvature bounded from below by $\rho>0$ compares to that of the sphere.

We were not able to exploit the above lemma to recover the log-Sobolev inequality 
$\Ent_{\mu}(f) \leq \frac{n-1}{2\rho n}\int\frac{\Gamma(f)}{f}$ that is known to hold under $BE(\rho,n)$. It seems that some ingredient is missing in order to develop an analogue of Theorem \ref{thm:local-commutation} under $BE(\rho,n)$-condition.

\end{rem}

Finally, let us present some inequality we encountered during our investigations and that looked intriguing to us. The starting point is the following enhanced $BE(\rho,n)$-condition that can be derived easily using the same proof as for $BE(\rho,\infty)$-condition, see \textit{e.g.}\ \cite[Lemma 5.4.4]{ane}. The following enhanced condition can be found in \cite{sturm18}.  We refer the reader to such reference for more details.

\begin{thm}
\label{lem:gamma2}
Let $\rho \colon \mathbb{R}^n \to \mathbb{R}$ be some function and $n \in [1,\infty]$. 
Assume that for all $f$ sufficiently regular and all $x$, $\Gamma_2(f)(x) \geq \rho(x) \Gamma(f)(x) + \frac{1}{n}(Lf(x))^2$. Then, 
for all $f,g$ sufficiently regular and all $x$
 it holds
 $$
 \left(\Gamma_2(f) -\rho \Gamma(f) - \frac{1}{n} (Lf)^2\right) \left( \left( 1-\frac{2}{n}\right)\Gamma(f,g)^2+\Gamma(f)\Gamma(g) \right)
 \geq \frac{1}{2} \left( \Gamma(g,\Gamma(f)) - \frac{2}{n}\Gamma(f,g)Lf\right)^2
 .
$$
 \end{thm}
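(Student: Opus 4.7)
The plan is to reduce the claimed inequality to a pointwise bilinear Cauchy--Schwarz applied to a cleverly chosen auxiliary function. Introduce the symmetric bilinear form
\begin{equation*}
Q(u,v) \coloneqq \Gamma_2(u,v) - \rho(x)\,\Gamma(u,v) - \tfrac{1}{n}\, Lu \cdot Lv, \qquad Q(u):=Q(u,u),
\end{equation*}
so the hypothesis is exactly $Q(u)(x) \geq 0$ at every $x$ for every sufficiently regular $u$. Applying this to $u = f + t h$ for each $t \in \mathbb{R}$ yields at each $x$ a non-negative quadratic in $t$ with non-negative leading coefficient, whose discriminant must be non-positive; this produces the pointwise bilinear Cauchy--Schwarz
\begin{equation*}
Q(f,h)^2 \leq Q(f)\,Q(h), \qquad f,h \in \mathcal{A}.
\end{equation*}

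The key step is to apply this bound at an arbitrary fixed point $x$ with the test function
\begin{equation*}
\phi_x(y) \coloneqq f(y)\,g(y) - f(x)\,g(y) - g(x)\,f(y),
\end{equation*}
where $f(x)$ and $g(x)$ are constants, tuned so as to cancel the parasitic cross-terms at $y=x$. Computing $Q(f,\phi_x)(x)$ and $Q(\phi_x)(x)$ rests on two algebraic identities obtained by routine bookkeeping from the diffusion property, the Leibniz rule $L(fg) = fLg + gLf + 2\Gamma(f,g)$, and the definition of $\Gamma_2$. The first is
\begin{equation*}
\Gamma_2(f, fg) = f\,\Gamma_2(f,g) + g\,\Gamma_2(f) + \Gamma(g,\Gamma(f)),
\end{equation*}
which combined with $\Gamma(f,fg) = f\Gamma(f,g) + g\Gamma(f)$ and $L(fg)$ above produces
\begin{equation*}
Q(f,fg) = f\,Q(f,g) + g\,Q(f) + Y, \qquad Y \coloneqq \Gamma(g,\Gamma(f)) - \tfrac{2}{n}\Gamma(f,g)\,Lf.
\end{equation*}
A longer parallel expansion of $\Gamma_2(fg)$, $\Gamma(fg)$, and $(L(fg))^2$ yields
\begin{equation*}
Q(fg) = f^2 Q(g) + g^2 Q(f) + 2fg\,Q(f,g) + 2X + 2fY' + 2gY,
\end{equation*}
with $X = (1-\tfrac{2}{n})\Gamma(f,g)^2 + \Gamma(f)\Gamma(g)$ and $Y'$ the $f\leftrightarrow g$ counterpart of $Y$.

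Substituting into $Q(f,\phi_x) = Q(f,fg) - f(x)Q(f,g) - g(x)Q(f)$ and into the bilinear expansion of $Q(\phi_x)$, then evaluating at $y=x$, the contributions weighted by the constants $f(x)$ and $g(x)$ cancel exactly, leaving
\begin{equation*}
Q(f,\phi_x)(x) = Y(x), \qquad Q(\phi_x)(x) = 2\,X(x).
\end{equation*}
The pointwise bilinear Cauchy--Schwarz $Q(f,\phi_x)(x)^2 \leq Q(f)(x)\,Q(\phi_x)(x)$ is then exactly $Y(x)^2 \leq 2\,Q(f)(x)\,X(x)$, which is the claimed inequality. The main obstacle is the algebraic verification of the two $Q$-identities; once these are in hand, the point-dependent choice of $\phi_x$ is essentially forced by the need to eliminate the terms $fQ(f,g)+gQ(f)$ arising in $Q(f,fg)$ and their symmetric counterparts arising in $Q(fg)$, isolating the cross-term $Y$ and the quadratic form $X$.
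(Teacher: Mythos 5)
Your argument is correct. The paper itself gives no proof of this statement (it merely cites Sturm), so I checked your computations directly. The bilinear form $Q$ is well-defined and the hypothesis indeed says $Q(u)(x)\geq 0$ for all $u$ and $x$; the polarized Cauchy--Schwarz inequality $Q(f,h)^2\leq Q(f)Q(h)$ then follows pointwise by the discriminant argument. Both structural identities check out by a straightforward but careful expansion using $\Gamma(fg,h)=f\Gamma(g,h)+g\Gamma(f,h)$, $L(fg)=fLg+gLf+2\Gamma(f,g)$, and the product formula $\Gamma_2(f,fg)=f\Gamma_2(f,g)+g\Gamma_2(f)+\Gamma(g,\Gamma(f))$; in particular
\begin{equation*}
\Gamma_2(fg)=f^2\Gamma_2(g)+g^2\Gamma_2(f)+2fg\,\Gamma_2(f,g)+2\Gamma(f)\Gamma(g)+2\Gamma(f,g)^2+2f\Gamma(f,\Gamma(g))+2g\Gamma(g,\Gamma(f)),
\end{equation*}
and incorporating the $-\rho\Gamma$ and $-\tfrac1n(L\cdot)^2$ corrections yields exactly your stated expansion of $Q(fg)$, with $X=(1-\tfrac2n)\Gamma(f,g)^2+\Gamma(f)\Gamma(g)$ and $Y,Y'$ as you define them. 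Substituting $\phi_x=fg-f(x)g-g(x)f$ and evaluating at $y=x$, all terms carrying a factor $(f-f(x))$ or $(g-g(x))$ vanish, leaving $Q(f,\phi_x)(x)=Y(x)$ and $Q(\phi_x)(x)=2X(x)$; the Cauchy--Schwarz inequality at $x$ is then precisely $\tfrac12 Y(x)^2\leq Q(f)(x)X(x)$, the claim. This is a clean, self-contained route to a result the paper only cites: the choice of the point-dependent test function $\phi_x$ plays the role that nonlinear test functions $\Psi(f,g)$ play in the usual Bakry--Ledoux-style self-improvement arguments, but the linear combination you use keeps the bookkeeping minimal and makes the cancellation at $y=x$ transparent.
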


\begin{rem}
For $n=\infty$ one recovers \eqref{eq:enhanced}
\end{rem}

Consider the curvature condition $BE(\rho,2)$ and fix $g$ smooth. Taking $f=g$ in the first inequality of the above lemma yields 
\begin{equation} \label{eq:final}
    \Gamma_2(g) \geq \rho \Gamma(g) +(Lg)^2 +\frac{1}{2} \frac{\Gamma(g,\Gamma(g))^2}{\Gamma(g)^2}-\frac{\Gamma(g,\Gamma(g))}{\Gamma(g)}Lg .
\end{equation}
Now for any non-negative smooth function $N \colon \mathbb{R}^n \to \mathbb{R}_+$, using that
$\Gamma(g,Lg)= \frac{1}{2}L\Gamma(g) - \Gamma_2(g)$ (by construction) and \eqref{eq:final}, it holds
\begin{align*}
     \int N & \Gamma(g,Lg)d\mu
     =
     \frac{1}{2}\int NL\Gamma(g)d\mu-\int N\Gamma_{2}(g)d\mu\\
    &\leq -\int N\left(\rho\Gamma(g)+\frac{\Gamma(g,\Gamma(g))^{2}}{2\Gamma(g)^{2}}-\frac{1}{2}L\Gamma(g)-\frac{\Gamma(g,\Gamma(g))}{\Gamma(g)}Lg\right)d\mu-\int N(Lg)^{2}d\mu \\
    &=-\int N\left(\rho\Gamma(g)+\frac{\Gamma(g,\Gamma(g))^{2}}{2\Gamma(g)^{2}}-\frac{1}{2}L\Gamma(g)-\frac{\Gamma(g,\Gamma(g))}{\Gamma(g)}Lg\right)d\mu+\int Lg\Gamma(N,g)d\mu
     +\int N\Gamma(g,Lg)d\mu
\end{align*}
where we used an integration by parts in the last inequality. The last term $\int N\Gamma(g,Lg)d\mu$ cancels. Therefore
$$
\int N\left(\rho\Gamma(g)+\frac{\Gamma(g,\Gamma(g))^{2}}{2\Gamma(g)^{2}}-\frac{1}{2}L\Gamma(g)-\frac{\Gamma(g,\Gamma(g))}{\Gamma(g)}Lg\right)d\mu\leq\int Lg \Gamma(N,g)d\mu = -\int \Gamma(g,\Gamma(N,g))d\mu .
$$ 
For $\rho=1$, say, and 
$N=\Gamma(g)$ this reads
\[
\int\big( \Gamma(g)^2+\frac{\Gamma(g,\Gamma(g))^2}{\Gamma(g)}+\frac{1}{2}\Gamma(\Gamma(g))   \big)d\mu \leq -2\int \Gamma( g,\Gamma(g,\Gamma(g)) ) d\mu.
\]
Now observe that the left hand side is positive. Therefore the right hand side must also be positive, which doesn't look obvious, at first glance.

\section{A second look at $BE(0,\infty)$} \label{sec:second-look}

In this section our aim is to prove Theorem \ref{thm:Devraj-intro} and to discuss the local eigenvalue condition \eqref{eq:Devraj-intro}.

\medskip

Although some of the tools used in this section are available in more general spaces, for simplicity, we will focus
our study on Markov Semi-groups in $\mathbf{R}^{n}$.  The setting is therefore the following. Given a smooth potential $V \colon \mathbb{R}^{n}\rightarrow\mathbb{R}$
we define the associated diffusion operator
$L = \Delta - \nabla V \cdot \nabla$
(the dot sign stands for the usual scalar product) 
acting on smooth functions $f \colon \mathbb{R}^n \to \mathbb{R}$ as 
$$
L f 
= 
\Delta f - \nabla V \cdot \nabla f 
= \sum_{i=1}^n \left( \frac{\partial^2 f}{\partial x_i^2}
- \frac{\partial V}{\partial x_i} \frac{\partial f}{\partial x_i} \right) .
$$
This is a diffusion operator symmetric in $L^2(\mu)$, where $\mu(dx) = e^{-V(x)}dx$, $x \in \mathbb{R}^n$, is assumed to be a probability measure (which amounts to assuming that $\int_{\mathbb{R}^n} e^{-V(x)}dx = 1$).
The carré du champ operator is then simply  
$$
\Gamma(f,g) = \nabla f \cdot \nabla g 
$$
and the $\Gamma_2$ operator
$$
\Gamma_2(f) = \nabla f \cdot \mathrm{Hess}(V) \nabla f + \sum_{i,j=1}^n \left(\frac{\partial^2 f}{\partial x_i \partial x_j} \right)^2
$$
for $f$ and $g$ smooth enough . 
Recall that $\rho(x)$ denotes the smallest eigenvalue of $\mathrm{Hess} V(x)$ defined in \eqref{eq:rho}  so that for all $x \in \mathbb{R}^n$, $\Gamma_2(f)(x) \geq \rho(x) \Gamma(f)(x)$. Now this inequality possesses a
self-improved version (see \cite[Lemma 5.4.4]{ane} for a proof) that reads as follows
\begin{equation} \label{eq:sel-improved gamma2}
4\Gamma(f)(x)[\Gamma_{2}(f)(x)-\rho(x)\Gamma(f)(x)]\geq \Gamma(\Gamma(f))(x) \qquad x \in \mathbb{R}^n .
\end{equation}
We denote by $(P_t)_{t \geq 0}$ the associated Markov semi-group defined alternatively as
$$
P_tf(x)=\mathbb{E}[f(X_{t})]
$$
where $X_{t}$ is the solution of the Stochastic Differential Equation \eqref{eq:SPDE} with $\sigma=\sqrt{2} I$ (with $I$ the identity matrix)
and $b=\nabla V$, namely 
\begin{equation} \label{eq:SPDE-2} 
\begin{cases}
dX_{t} & \!\!\!=\sqrt{2} dB_{t}-\nabla V(X_{t})dt \\
X_{0}& \!\!\!=x
\end{cases}
\end{equation}
where $B_{t}$ is a standard Brownian motion in $\mathbf{R}^{n}$.

We are now in position to prove theorem \ref{thm:Devraj-intro} that we restate here for the reader's convenience.

\begin{thm} \label{th:Devraj-intro-bis}
Assume that there exist $\beta >0$, $p > 1$ and a function $g \geq 1$ on $\mathbb{R}^n$ such that 
\begin{equation*} 
    \frac{Lg(x)}{g(x)} \leq p\rho(x) - \beta, \qquad x \in \mathbb{R}^n .
\end{equation*}
Then, for all $f$ smooth enough, it holds
$$
|\nabla P_t f|^p \leq e^{-\beta t} g(x) \left( P_t |\nabla f|^\frac{p}{p-1} \right)^{p-1}, \qquad t >0 .
$$
\end{thm}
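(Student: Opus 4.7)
My plan is to combine a Bismut-type probabilistic representation of $\nabla P_t f$ with a Feynman--Kac estimate driven by the hypothesis on $g$.  Writing $X_t^x$ for the solution of \eqref{eq:SPDE-2} starting at $x$ and $J_t := \nabla_x X_t^x$ for its Jacobian flow, I will first establish, for $f$ smooth enough, the Bismut-type identity
$$
\nabla P_t f(x) = \mathbb{E}\bigl[J_t^{\,T}\,\nabla f(X_t^x)\bigr],
$$
obtained by differentiating $P_t f(x) = \mathbb{E}[f(X_t^x)]$ in $x$.  Since the diffusion coefficient of \eqref{eq:SPDE-2} is constant, $J_t$ will satisfy the pathwise linear ODE $dJ_t = -\mathrm{Hess}\,V(X_t)\,J_t\,dt$ with $J_0 = I$, from which $\tfrac{d}{dt}|J_t v|^2 \leq -2\rho(X_t)|J_t v|^2$ and hence the pathwise operator-norm bound
$$
\|J_t\|_{\mathrm{op}} \leq \exp\!\Bigl(-\int_0^t \rho(X_s)\,ds\Bigr).
$$

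Plugging this into the Bismut representation and invoking H\"older's inequality with the conjugate exponents $p$ and $p/(p-1)$ will give
$$
|\nabla P_t f(x)|^p \leq \mathbb{E}\!\Bigl[\exp\!\Bigl(-p\int_0^t \rho(X_s)\,ds\Bigr)\Bigr]\cdot\bigl(P_t |\nabla f|^{p/(p-1)}(x)\bigr)^{p-1}.
$$
It will then remain to prove the Feynman--Kac estimate
$$
\mathbb{E}_x\!\Bigl[\exp\!\Bigl(-p\int_0^t \rho(X_s)\,ds\Bigr)\Bigr] \leq e^{-\beta t}\,g(x). \qquad(\ast)
$$
To this end, I will apply It\^o's formula to $Z_t := g(X_t)\exp\!\bigl(-p\int_0^t \rho(X_s)\,ds\bigr)$; since $L$ is the generator of $X_t$, this yields
$$
dZ_t = \exp\!\Bigl(-p\int_0^t \rho\,ds\Bigr)\Bigl(\sqrt{2}\,\nabla g(X_t)\cdot dB_t + \bigl(Lg - p\rho\,g\bigr)(X_t)\,dt\Bigr).
$$
The hypothesis $Lg \leq (p\rho - \beta)g$ bounds the drift by $-\beta Z_t\,dt$, so $e^{\beta t}Z_t$ will be a local supermartingale.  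Localisation along the exit times $\tau_n := \inf\{s : |X_s|\geq n\}$, followed by monotone convergence, will give $\mathbb{E}[Z_t] \leq g(x)\,e^{-\beta t}$, and $(\ast)$ will then follow from the pointwise inequality $\exp(-p\int_0^t \rho\,ds) \leq Z_t$ guaranteed by $g \geq 1$.

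The main obstacle will be the rigorous justification of the two stochastic-analytic ingredients.  The interchange of $\nabla_x$ and $\mathbb{E}$ that produces the Bismut formula requires moment control on $J_t$ together with suitable regularity and growth of $f$, which I expect to handle by approximating $f$ by compactly supported smooth functions and passing to the limit.  The promotion of the local-supermartingale property of $e^{\beta t}Z_t$ to a genuine supermartingale inequality requires integrability of the stochastic integral on $[0,t]$; this I plan to obtain through the standard localisation along $\tau_n$, exploiting $\beta > 0$ and $g \geq 1$ to ensure that the limit $n\to\infty$ preserves the bound even though no global lower curvature bound is assumed.
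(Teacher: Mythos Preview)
Your proof is correct and follows the same architecture as the paper: a pointwise gradient estimate $|\nabla P_t f(x)| \le \mathbb{E}\bigl[|\nabla f(X_t)|\,e^{-\int_0^t \rho(X_s)\,ds}\bigr]$, then H\"older, then a supermartingale argument exploiting $g\ge 1$.  The differences are in packaging.  First, you derive the gradient estimate via the Bismut/Jacobian-flow identity $\nabla P_t f = \mathbb{E}[J_t^{\!T}\nabla f(X_t)]$ and the pathwise bound on $\|J_t\|$, whereas the paper proves the same inequality by a $\Gamma_2$-interpolation: it differentiates $s\mapsto \overline{P_s}\sqrt{\Gamma(P_{t-s}f)}$ with $\overline{P_s}$ the Feynman--Kac semigroup of $L-\rho$ and invokes the self-improved curvature inequality~\eqref{eq:sel-improved gamma2}.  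Second, the paper inserts the hypothesis $\rho\ge \frac{Lg}{pg}+\frac{\beta}{p}$ \emph{before} H\"older and then uses the supermartingale $Y_t=g(X_t)\exp\bigl(-\int_0^t \frac{Lg}{g}(X_s)\,ds\bigr)$, while you apply H\"older first and then control $\mathbb{E}\bigl[e^{-p\int_0^t\rho\,ds}\bigr]$ through $Z_t=g(X_t)e^{-p\int_0^t\rho\,ds}$.  Both orderings work; your $Z_t$ requires the hypothesis inside the It\^o computation, the paper's $Y_t$ is a supermartingale unconditionally (as a positive local martingale).  Your localisation via $\tau_n$ is the honest version of what the paper asserts in one line.
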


If for all $x \in \mathbb{R}^n$, $\rho(x) \geq  \rho_o$ for some $\rho_o>0$, then $g \equiv 1$ and $\beta=p\rho_o$
satisfy \eqref{eq:Devraj-intro}. The conclusion of the theorem then reduces exactly to $(iii)$ of Section \ref{sec:architecture} for $p=2$, that is best possible. In that sense Theorem \ref{thm:Devraj-intro} constitutes a generalisation of the commutation property $(iii)$.
Condition \eqref{eq:Devraj-intro} -- we may say that $L$ satisfies a \emph{local eigenvalue condition} (with parameters $\beta$, $p$ and function $g$) -- allows to move from the commutation property $(iii)$ 
$|\nabla P_t f|(x)^2 \leq e^{-2\rho(x) t}P_t (|\nabla f|^2)(x)$ to 
$|\nabla P_t f|(x)^2 \leq e^{-2\beta t} g(x) P_t (|\nabla f|^2)(x)$, therefore moving the dependence in $x$ inside the exponential to outside, at the price of the extra factor $g(x)$.
In order to prove Theorem \ref{th:Devraj-intro-bis} we need, as a preparation, two lemmas.

\begin{lem} \label{lem:super-martingale}
Let $g \colon \mathbb{R}^n \to (0,\infty)$ be a smooth function
and $X_t$ be the solution of \eqref{eq:SPDE-2}. Then
$$
Y_{t} 
\coloneqq 
g(X_{t})e^{-\int_{0}^{t}\frac{Lg}{g}(X_{s})ds}
$$ 
is a super-martingale. 
\end{lem}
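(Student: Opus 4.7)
The plan is to apply Itô's formula directly to $Y_t$ and show that its drift vanishes, which actually yields that $Y_t$ is a (non-negative) local martingale; non-negativity then upgrades this to a supermartingale via the standard Fatou argument.

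First, I would compute $dg(X_t)$ using Itô's formula on the SDE \eqref{eq:SPDE-2}. Since $d\langle X^i, X^j\rangle_t = 2\delta_{ij}\,dt$, one gets
\begin{equation*}
dg(X_t) = \sqrt{2}\,\nabla g(X_t)\cdot dB_t + \bigl(\Delta g(X_t) - \nabla V(X_t)\cdot\nabla g(X_t)\bigr)dt = \sqrt{2}\,\nabla g(X_t)\cdot dB_t + Lg(X_t)\,dt.
\end{equation*}
Next set $A_t := \int_0^t \tfrac{Lg}{g}(X_s)\,ds$, which is an absolutely continuous (finite variation) process with $dA_t = \tfrac{Lg}{g}(X_t)\,dt$, so $d e^{-A_t} = - e^{-A_t}\tfrac{Lg}{g}(X_t)\,dt$.

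Then I apply the Itô product rule to $Y_t = g(X_t)\, e^{-A_t}$; since $e^{-A_t}$ has zero quadratic variation, no cross bracket term appears, and
\begin{equation*}
dY_t = e^{-A_t}\,dg(X_t) + g(X_t)\,de^{-A_t} = \sqrt{2}\, e^{-A_t}\nabla g(X_t)\cdot dB_t + e^{-A_t}\Bigl(Lg(X_t) - g(X_t)\cdot\tfrac{Lg}{g}(X_t)\Bigr)dt.
\end{equation*}
The drift cancels identically, so $Y_t$ is a local martingale with explicit stochastic integral representation $Y_t = g(X_0) + \sqrt{2}\int_0^t e^{-A_s}\nabla g(X_s)\cdot dB_s$.

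To conclude, I invoke the standard fact that a non-negative local martingale is a supermartingale: since $g > 0$, one has $Y_t \geq 0$ almost surely, and localizing along a sequence of stopping times $\tau_n \uparrow \infty$ together with Fatou's lemma gives $\mathbb{E}[Y_t \mid \mathcal{F}_s]\leq Y_s$ for $s\leq t$. The only delicate point is that the argument used $g$ and $Lg/g$ to be regular enough for Itô's formula (and to avoid blow-up of the integrand), but the statement assumes $g$ smooth and in the applications $g \geq 1$, so that $Lg/g$ is locally bounded on any trajectory and the local martingale property is genuine; I do not expect any substantive obstacle here.
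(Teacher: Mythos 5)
Your proof is correct and follows essentially the same route as the paper: apply the Itô product rule to $Y_t = g(X_t)\,e^{-\int_0^t \frac{Lg}{g}(X_s)\,ds}$, observe that the drift terms cancel so $Y_t$ is a non-negative local martingale, and conclude it is a supermartingale. The paper phrases the computation with $\lambda_t$ and $M_t$ notation and cites the positive-local-martingale-is-supermartingale fact without spelling out the Fatou localization, but the argument is identical.
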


The second Lemma is classical. It uses the Feynman-Kac formula to express the gradient of $P_tf$ in term of $\nabla f$.

\begin{lem} \label{lem:feynmann-Kac}
For any $f:\mathbb{R}^n \to \mathbb{R}$ smooth enough and $X_t$ be the solution of \eqref{eq:SPDE-2}, one has
$$
|\nabla P_{t}f(x)|
\leq
\mathbb{E}\left[|\nabla f(X_{t})|e^{-\int_{0}^{t}\rho(X_{s})ds} \right], 
\qquad x \in \mathbb{R}^n .
$$
\end{lem}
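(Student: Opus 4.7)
The plan is to derive the bound by explicitly differentiating the stochastic flow representation $P_t f(x) = \mathbb{E}[f(X_t^x)]$ in the initial condition $x$ and then controlling the resulting Jacobian via the lower bound $\mathrm{Hess}(V) \geq \rho$. This is the classical Bismut-style approach (a Feynman-Kac representation where the multiplicative functional is the Jacobian flow rather than a scalar exponential).

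First, I would fix a smooth potential $V$ and a smooth $f$ with enough decay for differentiation under the expectation to be justified, and consider the Jacobian matrix $J_t^x := \partial X_t^x / \partial x \in \mathbb{R}^{n\times n}$ of the flow generated by \eqref{eq:SPDE-2}. Since the diffusion coefficient $\sigma = \sqrt{2}\,I$ is constant, the stochastic differential disappears when one differentiates the SDE in $x$, leaving the linear random ODE
\begin{equation*}
\frac{d J_t}{dt} = -\mathrm{Hess}(V)(X_t)\, J_t, \qquad J_0 = I.
\end{equation*}
Chain rule then yields
\begin{equation*}
\nabla P_t f(x) = \mathbb{E}\bigl[J_t^\top\, \nabla f(X_t^x)\bigr].
\end{equation*}

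The heart of the argument is a pointwise control on the operator norm $\|J_t\|_{\mathrm{op}}$. For any vector $v \in \mathbb{R}^n$, using the Jacobian ODE and the definition \eqref{eq:rho} of $\rho(x)$ as the smallest eigenvalue of $\mathrm{Hess}(V)(x)$, I would compute
\begin{equation*}
\frac{d}{dt}|J_t v|^2
= -2\,\langle J_t v,\ \mathrm{Hess}(V)(X_t)\, J_t v\rangle
\leq -2\rho(X_t)\,|J_t v|^2,
\end{equation*}
so that Grönwall's lemma gives $|J_t v| \leq e^{-\int_0^t \rho(X_s)\,ds}\,|v|$ for every $v$, and hence $\|J_t\|_{\mathrm{op}} \leq e^{-\int_0^t \rho(X_s)\,ds}$ almost surely.

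Combining this operator bound with the representation of $\nabla P_t f(x)$ and the triangle inequality for expectations yields
\begin{equation*}
|\nabla P_t f(x)|
\leq \mathbb{E}\bigl[\|J_t\|_{\mathrm{op}}\,|\nabla f(X_t^x)|\bigr]
\leq \mathbb{E}\Bigl[|\nabla f(X_t^x)|\, e^{-\int_0^t \rho(X_s)\,ds}\Bigr],
\end{equation*}
which is the claim. The main obstacle is the rigorous justification of the differentiation $\nabla_x \mathbb{E}[f(X_t^x)] = \mathbb{E}[J_t^\top \nabla f(X_t^x)]$ and of the existence of a smooth stochastic flow generating $J_t$; both are standard under mild growth/regularity hypotheses on $V$ and $f$ (locally Lipschitz derivatives of $V$ plus enough decay of $f$ or $\nabla f$), and are subsumed by the informal ``smooth enough'' assumption in the statement. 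No lower bound on $\rho$ is needed for the argument itself: the inequality holds pointwise for every realization of $X_s$.
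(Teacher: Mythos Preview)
Your proof is correct, but it takes a genuinely different route from the paper's. You use the Bismut stochastic-flow approach: differentiate the representation $P_t f(x)=\mathbb{E}[f(X_t^x)]$ in $x$, obtain the Jacobian equation $\dot J_t=-\mathrm{Hess}(V)(X_t)J_t$, control $\|J_t\|_{\mathrm{op}}$ pathwise by Gr\"onwall using $\mathrm{Hess}(V)\geq\rho\,I$, and conclude. The paper instead argues entirely within the $\Gamma$-calculus: it introduces the Feynman--Kac semigroup $\overline{P_t}$ generated by $\overline L=L-\rho$, interpolates $\Lambda(s)=\overline{P_s}\bigl(\sqrt{\Gamma(P_{t-s}f)}\bigr)$, and shows $\Lambda'(s)\geq 0$ via the self-improved inequality \eqref{eq:sel-improved gamma2}; the identification of $\overline{P_t}(|\nabla f|)$ with the expectation is then the standard Feynman--Kac formula. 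Your argument is more direct and elementary in the Euclidean setting, makes the role of the pointwise curvature bound completely transparent, and does not require the enhanced $\Gamma_2$-inequality. The paper's argument, on the other hand, needs no differentiability of the stochastic flow in the initial condition and is phrased in a way that transfers to abstract Markov triples where a Jacobian flow is not available --- which is consistent with the spirit of the rest of the paper.
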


We postpone the proof of these lemma to prove first  Theorem \ref{th:Devraj-intro-bis}.

\begin{proof}[Proof of Theorem \ref{th:Devraj-intro-bis}]
Let $\beta >0$, $p > 1$ and  $g \geq 1$ as in \eqref{eq:Devraj-intro}.
By Lemma \ref{lem:feynmann-Kac} and the definition of $g$, we have
\begin{align*}
        |\nabla P_{t}f(x)|
        &\leq
        \mathbb{E}_{x} \left[ 
        |\nabla f(X_{t})| e^{-\int_{0}^{t}\rho(X_{s})ds}
        \right]\\
        &\leq
        \mathbb{E}_{x} \left[ 
        |\nabla f(X_{t})| e^{-\int_{0}^{t}\frac{Lg}{pg}(X_{s})+ \frac{\beta}{p} ds} \right] \\
        & = 
        e^{-\beta t/p}\mathbb{E}_{x} \left[ 
        |\nabla f(X_{t})| e^{-\int_{0}^{t}\frac{Lg}{pg}(X_{s}) ds} \right]
        .
\end{align*}
Therefore, by Holder's inequality
$$
|\nabla P_{t}f(x)|^p
 \leq 
 e^{-\beta t}
\left( 
\mathbb{E}_{x} 
\left[ 
|\nabla f|(X_{t})^{q}
\right]
\right)^{\frac{p}{q}}
\mathbb{E}_{x} \left[
e^{-\int_{0}^{t}\frac{Lg}{g}(X_{s}) ds}
\right] .
$$
\\\\Now on the one hand 
$$
\mathbb{E}_{x} 
\left[ 
|\nabla f|(X_{t})^{q}
\right] = P_t (|\nabla f|^q)(x) ,
$$ 
and on the other hand, since $g \geq 1$ and by Lemma \ref{lem:super-martingale},
\begin{align*}
\mathbb{E}_{x} \left[
e^{-\int_{0}^{t}\frac{Lg}{g}(X_{s}) ds}
\right]
& \leq 
\mathbb{E}_{x} \left[
g(X_t) e^{-\int_{0}^{t}\frac{Lg}{g}(X_{s}) ds}
\right] \\
& \leq 
g(x) .
\end{align*} 
The expected result follows.
\end{proof}

\begin{proof}[Proof of Lemma \ref{lem:super-martingale}]
Let us define 
$$
\lambda_{t} \coloneqq e^{-\int_{0}^{t}\frac{Lg}{g}(X_{s})ds}
$$ and
$$
M_{t} \coloneqq \sqrt{2}\int_{0}^{t}\nabla g(X_{s}^{x})dB_{s},
\qquad t>0 .
$$
By Itô's formula, one has
\begin{equation}
    \begin{split}
        dY_{t}&=g(X_{t}^{x})d\lambda_{t}+\lambda_{t}d(g(X_{t}^{x}))\\
        &=g(X_{t}^{x})\lambda_{t}\frac{-Lg(X_{t}^{x})}{g(X_{t}^{x})}dt+\lambda_{t}d(g(X_{t}^{x}))\\
        &=-\lambda_{t}Lg(X_{t}^{x})dt+\lambda_{t}[dM_{t}+Lg(X_{t}^{x})dt]\\
        &=\lambda_{t}dM_{t} .
    \end{split}
\end{equation}
Since $Y_{t}$ is trivially seen to be positive, the above calculations show that $Y_{t}$ is a positive local martingale and we have therefore shown that $Y_{t}$ is a super-martingale. 
\end{proof}
The proof of Lemma \ref{lem:feynmann-Kac} is classical and provided below for completeness.

\begin{proof}[Proof of Lemma \ref{lem:feynmann-Kac}]
Denote by $\overline{P_{t}}$  the semi-group   generated by the infinitesimal operator $\overline{L}=L-\rho$, 
where $\rho$ acts multiplicatively. Namely
$\overline{L}f(x)=\Delta f(x)
-\nabla V(x) \cdot \nabla f(x) - \rho(x)f(x)$,
$x \in \mathbb{R}^n$.
Set, for $s \in [0,t]$, $t>0$ fixed,
$$
\Lambda(s)=\overline{P_{s}}\left( \sqrt{\Gamma(P_{t-s} f)} \right).
$$
Then, taking the derivative,
\begin{align*}
        \Lambda'(s)
        & =
        \overline{P_{s}} \left[ \overline{L}\sqrt{\Gamma(P_{t-s}f)}-\frac{\Gamma(P_{t-s}f, LP_{t-s}f)}{\sqrt{\Gamma(P_{t-s}f)}} \right]\\
        &=
        \overline{P_{s}} \left[ L\sqrt{\Gamma(P_{t-s}f)}-\frac{\Gamma(P_{t-s}f, LP_{t-s}f)}{\sqrt{\Gamma(P_{t-s}f)}} - \rho \sqrt{\Gamma(P_{t-s}f)}  \right].
 \end{align*}
 Set $g=P_{t-s}f$ for simplicity.
 Using the chain rule formula 
 $L\psi(g)=\psi'(g)Lg+\psi''(g)\Gamma(g)$
 with $\psi(x)=\sqrt{x}$, it holds
 $$
 L\sqrt{\Gamma(P_{t-s}f)} 
 =
  \frac{L\Gamma(g)}{2\sqrt{\Gamma(g)}}
  - \frac{\Gamma(\Gamma(g))}{4\Gamma(g)^{\frac{3}{2}}} .
 $$
 Therefore, 
 \begin{align*}
 \Lambda'(s)
        &=
        \overline{P_{s}} \left[
        \frac{1}{\sqrt{\Gamma(g)}}
        \left( 
        \Gamma_{2}(g)-\frac{\Gamma(\Gamma(g))}{4\Gamma(g)}-\rho(x)\Gamma(g)
        \right)
        \right]
\end{align*}
Now the self-improved inequality \eqref{eq:sel-improved gamma2}  implies that $\Lambda'(s) \geq 0$ so that $\Lambda(t) \geq \Lambda(0)$ that can be recast as
$$
|\nabla P_{t}f| \leq \overline{P_{t}}(|\nabla f|) .
$$
To end the proof it is enough to observe that, thanks to the Feynman Kac formula,  
$$
\overline{P_{t}} ( |\nabla f| ) (x)
=
\mathbb{E}_{x}\left[|\nabla f(X_{s})|e^{-\int_{0}^{t}\rho(X_{s})ds}\right].
$$
\end{proof}

\subsection{A study of the local eigenvalue condition via examples}

In this section we analyse Condition \eqref{eq:Devraj-intro} for different choices of the potential $V$.

As mentioned in the introduction, the local eigenvalue condition 
$$
\frac{Lg(x)}{g(x)} \leq p\rho(x) - \beta, \qquad x \in \mathbb{R}^n .
$$
is reminiscent of the following Lyapunov condition considered in \cite{cgww}  
$$
\frac{LW}{W} \leq -\phi + b \mathds{1}_{B(0,r)}
$$
where $W, \phi$ are functions satisfying $W \geq 1$, $\phi > \phi_o >0$ and $b,r>0$.

Notice that, if $\rho(x) \leq 0$ for all $x$, then the local eigenvalue condition implies the Lyapunov condition with $W=g$, any $\phi < \beta$ and any $b,r>0$. 
On the other hand, if $\rho(x)>\rho_o>0$ for some $\rho_o$, uniformly in $x \in \mathbb{R}^n$, then the Lyapunov condition implies the local eigenvalue condition
with $g=W$, $\beta=\phi_o$ and $p > 1$ large enough so that $b \leq p \rho_o$.
In general it seems that there is no obvious relationship between the two conditions.

Finally, we observe that if the condition 
$\frac{Lg(x)}{g(x)} \leq p\rho(x) - \beta$ holds outside a bounded set $N$ of Lebesgue measure $0$, then for any $\varepsilon>0$ there exists $g_\varepsilon \geq 1$ of class $\mathcal{C}^\infty$ on the whole $\mathbb{R}^n$ satisfying $\frac{Lg_\varepsilon(x)}{g_\varepsilon(x)} \leq (1+\varepsilon)p\rho(x) - \beta$. To see this it is enough to consider $g * k_\delta$ for some $\mathcal{C}^\infty$-kernel $k_\delta$ ($k_\delta \geq 0$, with support on the ball $B(0,\delta)$ of radius $\delta$, satisfying $\int k_\delta =1$) for $\delta=\delta(\varepsilon)$ and to use the uniform continuity of $x \mapsto \rho(x)$ on any compact.

Next we turn to explicit examples.

\subsubsection{Spherically symmetric exponential-type measures}

In this section we consider the measure $\mu(dx)=\frac{1}{Z_{\alpha}} \exp\left(-(1+|x|^{2})^{\frac{\alpha}{2}}\right)dx$ with potential 
$$
V(x)=(1+|x|^{2})^{\frac{\alpha}{2}} + \log Z_\alpha, \qquad x \in \mathbb{R}^n
$$
where $Z_{\alpha}$ is a normalisation constant and $|\cdot|$ stands for the Euclidean norm. 
Here we consider only $\alpha \in [1,2)$ since for $\alpha \geq 2$,  $V$ is uniformly convex, therefore entering the usual framework of the $\Gamma_2$-condition, while for $\alpha < 1$ there does not seem to exist $g$ satisfying the local eigenvalue condition.
For any vector $\ell \in \mathbf{R}^{n}$, it holds 
$$
 \langle \ell, \mathrm{Hess} V(x) \ell \rangle =\alpha|\ell|^{2}\left(1+|x|^{2}\right)^{\frac{\alpha}{2}-1}+\alpha(\alpha-2)\left(1+|x|^{2}\right)^{\frac{\alpha}{2}-2}\langle \ell,x \rangle^{2}
, \qquad x \in \mathbb{R}^n .
$$ 
Hence, the smallest eigenvalue of the Hessian matrix of $V$ is
$$
\rho(x)=\alpha\left(1+|x|^{2}\right)^{\frac{\alpha}{2}-1} , \qquad x \in \mathbb{R}^n  .
$$ 
\begin{prop}
For $\alpha \in [1,2)$, set $V(x)=(1+|x|^2)^\frac{\alpha}{2}$, $L=\Delta - \nabla V \cdot \nabla$ and $\rho(x)=\alpha\left(1+|x|^{2}\right)^{\frac{\alpha}{2}-1}$, $x \in \mathbb{R}^n$. Then for any $p>1$, there exist $c,\beta >0$ such that 
$g(x)=\exp \left( c(1+|x|^2)^\frac{2-\alpha}{2}\right)$
satisfies $\frac{Lg}{g}\leq p\rho(x)-\beta$.
Furthermore, for $\alpha \in (\frac{4}{3},2)$ one can choose $\beta=c=\frac{1}{4} \min(\frac{p}{n},\sqrt{p})$; for $\alpha \in (1,\frac{4}{3})$ one can choose $\beta=c/2$ and $c=c_\alpha \min(n, \frac{\sqrt{p}}{n^\frac{\alpha}{2(\alpha-1)}})$ for some constant $c_\alpha$ that depends only on $\alpha$, and for $\alpha = 1$, $\beta=c=\frac{1}{4n}$.
\end{prop}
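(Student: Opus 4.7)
The statement is essentially a direct computation followed by a case analysis on $\alpha$. The plan is to write $s = 1+|x|^{2}$ and $\gamma = (2-\alpha)/2$, and to set $u = c\,s^{\gamma}$ so that $g = e^{u}$. I would then use the identity $Lg/g = \Delta u + |\nabla u|^{2} - \nabla V\!\cdot\!\nabla u$ together with the elementary formulas
\[
\nabla(s^{\gamma}) = 2\gamma\,s^{\gamma-1}x,\qquad \Delta(s^{\gamma}) = 2\gamma n\,s^{\gamma-1} + 4\gamma(\gamma-1)\,s^{\gamma-2}|x|^{2},
\]
and the crucial algebraic fact $\alpha/2 + \gamma = 1$ which collapses $\nabla V\!\cdot\!\nabla u$ to $2c\alpha\gamma\,|x|^{2}/s$. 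Substituting $|x|^{2} = s-1$ throughout, I expect to arrive at the explicit formula
\[
\frac{Lg}{g}(x) \;=\; 2c\gamma(n-\alpha)\,s^{-\alpha/2} \;+\; 2c\alpha\gamma\,s^{-\alpha/2-1} \;+\; 4c^{2}\gamma^{2}\,s^{-\alpha}(s-1) \;-\; 2c\alpha\gamma\,(1 - s^{-1}).
\]

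The target inequality is $Lg/g \leq p\alpha\,s^{\alpha/2-1} - \beta$. The last term above furnishes the negative ``driving'' contribution that plays the role of $-\beta$ (it tends to $-c\alpha(2-\alpha)$ as $|x|\to\infty$), while the first two positive terms, which decay in $s$, must be absorbed into $p\alpha\,s^{\alpha/2-1}$. Using $\alpha\geq 1$, which implies $s^{-\alpha/2}\leq s^{\alpha/2-1}$ for $s\geq 1$, the first positive term produces the constraint $c n \lesssim p$. The delicate term is the quadratic-in-$c$ piece $4c^{2}\gamma^{2}\,s^{1-\alpha}$, whose ratio to $s^{\alpha/2-1}$ equals $s^{1-3\alpha/2}$, so that the natural threshold is $\alpha = 4/3$.

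For $\alpha\in(4/3,2)$ the $c^{2}$ term decays faster than the RHS and can be absorbed directly by $p\rho$; the two simultaneous requirements $cn\lesssim p$ and $c^{2}\lesssim p$ then yield the choice $c = \tfrac{1}{4}\min(p/n,\sqrt{p})$ with $\beta = c$ (compatible since $c\alpha(2-\alpha)\gtrsim c$ on this range). For $\alpha\in(1,4/3)$ the $c^{2}$ term outlasts $p\rho$ at infinity and must be absorbed into the negative constant instead, forcing $4c^{2}\gamma^{2}\lesssim c\alpha\gamma$ and hence $c$ small; enforcing as well the pointwise bound $2c\gamma n\,s^{-\alpha/2}\leq \tfrac{1}{2}p\alpha\,s^{\alpha/2-1}$ for every $s\geq 1$ (and not only asymptotically) leads to the mixed bound $c\lesssim \min(n,\sqrt{p}/n^{\alpha/(2(\alpha-1))})$ with margin $\beta = c/2$. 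For $\alpha=1$ the term $s^{1-\alpha}=1$ becomes constant, the quadratic contribution reduces to a pure constant $c^{2}$, and the inequality collapses to the check $c^{2} + c(n-1)s^{-1/2} + c\,s^{-3/2} \leq p\,s^{-1/2} + c(1-c)(1 - s^{-1}) - \beta$; the choice $c = \beta = 1/(4n)$ then renders each piece manageable.

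The main obstacle will be the regime $\alpha\in(1,4/3)$: one must keep $c$ small enough that $4c^{2}\gamma^{2}s^{1-\alpha}$ is pointwise dominated by $c\alpha\gamma(1-s^{-1}) + p\alpha\,s^{\alpha/2-1}$ for \emph{every} $s\geq 1$, and yet large enough that $\beta$ remains strictly positive even as $n$ grows. The peculiar exponent $\alpha/(2(\alpha-1))$ arises precisely by equalizing the cross-over radii of the two competing constraints in $s$, and is the only genuinely delicate book-keeping step.
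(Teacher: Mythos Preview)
Your proposal is correct and follows essentially the same approach as the paper: write $g=e^{f}$ with $f=c(1+|x|^{2})^{(2-\alpha)/2}$, compute $Lg/g$ explicitly in the radial variable, and split at the threshold $\alpha=4/3$ according to whether the quadratic-in-$c$ term decays faster or slower than $p\rho$. One arithmetic slip: the ratio of $s^{1-\alpha}$ to $s^{\alpha/2-1}$ is $s^{2-3\alpha/2}$, not $s^{1-3\alpha/2}$; the threshold $\alpha=4/3$ is unaffected. The paper's treatment of the regime $\alpha\in(1,4/3)$ is carried out by passing to the auxiliary variable $u=(1+t)^{-(\alpha-1)}$ and splitting the range $u\in(0,1]$ into three explicit sub-intervals determined by $n$, which is a more concrete version of the ``equalizing cross-over radii'' step you describe and is where the $\min(n,\sqrt{p}/n^{\alpha/(2(\alpha-1))})$ actually comes out.
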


\begin{rem}
For $\alpha \in (\frac{4}{3},2)$ Theorem \ref{thm:Devraj-intro} applies and leads to the commutation property
$|\nabla P_t f|^p (x)\leq e^{- \beta t} 
e^{c(1+|x|^2)^\frac{2-\alpha}{2}} \left( P_t |\nabla f|^\frac{p}{p-1}(x) \right)^{p-1}$ for all $t>0$ and all $p>1$, with $\beta=c=\frac{1}{4} \min(\frac{p}{n},\sqrt{p})$. Taking the $p$-th root with $p=t \geq n^2$, we obtain
$|\nabla P_t f| (x)\leq e^{- \frac{\sqrt{t}}{4}} 
e^{\frac{1}{4\sqrt{t}}(1+|x|^2)^\frac{2-\alpha}{2}} \left( P_t |\nabla f|^\frac{t}{t-1}(x) \right)^\frac{t-1}{t}$. In particular, if $x$ is such that $|x|\leq t^\frac{1}{2(2-\alpha)}$, it holds (since  $e^{\frac{\sqrt{2}}{4}} \leq 2$)
$$
|\nabla P_t f| (x)\leq 2 e^{- \frac{\sqrt{t}}{4}} 
 \left( P_t |\nabla f|^\frac{t}{t-1}(x) \right)^\frac{t-1}{t}. 
 $$
 Of course there is no hope to get a bound of the form $|\nabla P_t f| \leq e^{- \rho_o t} 
 P_t |\nabla f|$ (since otherwise the $\Gamma_2$-condition 
 $\Gamma_2 \geq \rho_o \Gamma$ would hold), but the latter shows that some rather closed inequality can be obtained, inside a ball.
\end{rem}

\begin{proof}
Fix $\alpha$ and $p>1$.
For $g=e^f$ the condition $\frac{Lg}{g}\leq p\rho(x)-\beta$ for $g \geq 1$, $p>1$ and $\beta >0$, amounts to  
$$
\Delta f+|\nabla f|^{2}-\alpha \left(1+|x|^{2}\right)^{\frac{\alpha}{2}-1} \langle x,\nabla f \rangle
\leq 
p\alpha\left(1+|x|^{2}\right)^{\frac{\alpha}{2}-1}-\beta , \qquad x \in \mathbb{R}^n   .
$$ 
For $f(x)=c(1+|x|^2)^{\theta/2}$ for some constants $c, \theta >0$ that will be chosen later on it holds
$$
\nabla f(x) = c \theta (1+|x|^2)^{\frac{\theta}{2}-1} x ,
$$
and
$$
\Delta f(x) = c n \theta(1+|x|^2)^{\frac{\theta}{2}-1}
+ c \theta(\theta-2)|x|^2(1+|x|^2)^{\frac{\theta}{2}-2} .
$$
Therefore, the latter inequality, with $t=|x|^2 \in \mathbb{R}^+$, reads
\begin{equation} \label{eq:foot}
c n \theta(1+t)^{\frac{\theta}{2}-1}
+ c \theta(\theta-2)t(1+t)^{\frac{\theta}{2}-2}
+ 
c^2 \theta^2 t (1+t)^{\theta-2}
- c \theta \alpha t(1+t)^{\frac{\alpha+\theta}{2}-2}
\leq 
p \alpha\left(1+t\right)^{\frac{\alpha}{2}-1}-\beta 
\end{equation}
Choosing $\theta=2-\alpha$ the left hand side
of \eqref{eq:foot} equals
\begin{align*}
c(2-\alpha) \left( \frac{n}{(1+t)^{\frac{\alpha}{2}}}
- \frac{\alpha t}{(1+t)^{\frac{\alpha}{2}+1}}
+ 
\frac{c (2-\alpha) t}{(1+t)^{\alpha}}
- \frac{\alpha t}{1+t} \right)  
& \leq 
c(2-\alpha) \left( \frac{n}{(1+t)^{\frac{\alpha}{2}}} 
+ \frac{c (2-\alpha)}{(1+t)^{\alpha-1}}
- \frac{\alpha t}{1+t} \right)  \\
& \leq 
c(2-\alpha) \left( \frac{n +c (2-\alpha)+ \alpha}{(1+t)^{\alpha-1}} - \alpha \right)  
\end{align*}
where in the first inequality we crudely removed a negative term, and used that $t < 1+t$, while in the second we used that
$\frac{t}{1+t} = 1 - \frac{1}{1+t}$ and the fact that
$\alpha-1 \leq \frac{\alpha}{2} \leq 1$.

As a consequence of the previous bounds, the local eigenvalue condition would hold if, for all $t \geq 0$ 
$$
c(2-\alpha) \left( \frac{n +c (2-\alpha)+ \alpha}{(1+t)^{\alpha-1}} - \alpha \right)
\leq 
\frac{p \alpha }{\left(1+t\right)^{\frac{2-\alpha}{2}}}-\beta .
$$
Since $\alpha \in (1,2)$ we can further simplify to proving that 
\begin{equation} \label{eq:to-be-proved}
c\left( \frac{n +c + 2}{(1+t)^{\alpha-1}} - 1 \right)
\leq 
\frac{p}{\left(1+t\right)^{\frac{2-\alpha}{2}}}-\beta .
\end{equation}

Assume first that $\alpha \geq 4/3$ so that $\alpha-1 \geq \frac{2-\alpha}{2}$. In that case, \eqref{eq:to-be-proved} would be a consequence of $c \left( (n+c+2)u - 1 \right)
\leq  p u-\beta$ for 
$u=1/(1+t)^\frac{2-\alpha}{2} \in (0,1]$.
This leads to the choice $\beta=c$ with $c$ satisfying 
${c}^2 +(n+2)c-p \leq 0$. This is guaranteed if 
$0 \leq c \leq \frac{-(n+2)+\sqrt{(n+2)^2+4p}}{2}$.
Since $-1+\sqrt{1+x} \geq \frac{1}{\sqrt{2}+1} \min(\sqrt{x},x)$ for any $x \geq 0$, one can choose
$c \leq \frac{1}{\sqrt{2}+1} \min \left( \frac{2p}{n+2} , \sqrt{p} \right)$.
In particular $c=\beta= \frac{1}{4} \min(\frac{p}{n},\sqrt{p})$  would do, as expected.

Assume now that $\alpha \in(1, \frac{4}{3})$ so that $\alpha-1 \leq \frac{2-\alpha}{2}$. Set $\gamma=\frac{2-\alpha}{2(\alpha-1)} \geq 1$ and observe that \eqref{eq:to-be-proved} can be rephrased as 
$c \left( (n+c+2)u - 1 \right)
\leq  p u^\gamma-\beta$
for 
$u=1/(1+t)^{\alpha-1} \in (0,1]$ that is more involved.
Since such an inequality should hold at $u=0$, necessarily $\beta \leq c$. Hence choose $\beta=c/2$ so that we need to find $c>0$ such that
$c^2u + c((n+2)u-\frac{1}{2}) -pu^\gamma \leq 0$, that we further simplify to
$$
c^2u + c(3nu-\frac{1}{2}) -pu^\gamma \leq 0
$$
for $u\in (0,1]$. For $u \in (0,\frac{1}{12n})$ it holds $3nu-\frac{1}{2} \leq -\frac{1}{4}$ so that the thesis would follow if $c^2 u - \frac{c}{4} - pu^\gamma \leq 0$ that is guaranteed if $c \leq \frac{1}{4u}$ for all $u \in (0,\frac{1}{12n})$ leading to the choice $c \leq 3n$.
For $u \in (\frac{1}{12n}, \frac{1}{6n})$,  removing the negative intermediate term $c(3nu-\frac{1}{2})$ it is enough to prove $c^2u -pu^\gamma \leq 0$ that holds if
$c^2 \leq pu^{\gamma-1}$ leading to $c \leq \frac{\sqrt{p}}{(12n)^\frac{\gamma-1}{2}}$. Now for $u \in (\frac{1}{6n},1)$, the above quadratic inequality  is satisfied if  $c^2 + 3nc -\frac{p}{(6n)^\gamma} \leq 0$ leading to $0 < c \leq \frac{-3n+\sqrt{9n^2 + \frac{4p}{(6n)^\gamma}}}{2}$.
Since $-1+\sqrt{1+x} \geq \frac{1}{\sqrt{2}+1} \min(\sqrt{x},x)$ for any $x \geq 0$ one can choose $c=d_\alpha \frac{\sqrt{p}}{n^{\gamma+1}}$ for some constant $d_\alpha$ depending only on $\alpha$. All together one can choose $c =c_\alpha \min(n, \frac{\sqrt{p}}{n^{\gamma+1}})$ for some constant $c_\alpha$ depending only on $\alpha$. 

To end the proof of the proposition it remains to deal with the special case $\alpha=1$ that is simpler and left to the reader as an exercise.
\end{proof}

\subsubsection{Powers}

For $\alpha \in [1,2)$ consider the potential 
$$
V(x) = \log Z_\alpha + \sum_{i=1}^n W(x_i)  , 
\qquad x=(x_1,\dots,x_n) \in \mathbb{R}^n
$$
with 
$$
W(s)=(1+s^2)^{\frac{\alpha}{2}}, \qquad s \in \mathbb{R}
$$
and $Z_\alpha$ the normalization that turns 
$\mu(dx)=e^{-V(x)}dx$ into a probability measure.

Such a potential is a smooth and bounded perturbation of  $\sum |x_i|^\alpha$. Other choices of perturbation were considered in the literature (see \textit{e.g.}\ \cite{bcr,bcr07}) and could be considered here as well, but at the price of heavier technicalities. As for the spherically symmetric potential of the previous section we deal only with $\alpha \in [1,2)$.
By construction $\mu(dx)=e^{-V(x)}dx$ is a product probability measure. Therefore the Hessian matrix of $V$ is diagonal and the smallest eigenvalue is $\rho(x)=\min_{i=1}^n W''(x_i)$.

\begin{prop}
Let $\alpha \in [1,2)$, $V$ and $\rho$ as above, and $L=\Delta - \nabla V \nabla$. Then, for any $p>1$, there exist $c,\theta,\beta>0$ (that may depend on $\alpha,n$ and $p$) such that  
$g(x) = \prod_{i=1}^n e^{c(\theta+x_i^2)^\frac{2-\alpha}{2}}$ satisfies $\frac{Lg}{g}\leq p\rho(x)-\beta$.
\end{prop}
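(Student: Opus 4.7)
The key simplification is that both the potential and the ansatz are separable across coordinates. Writing $g(x)=\prod_{i=1}^n e^{cw(x_i)}$ with $w(s)=(\theta+s^2)^{(2-\alpha)/2}$ and using that $V(x)=\log Z_\alpha+\sum_i W(x_i)$, a direct computation gives
\[
\frac{Lg(x)}{g(x)} \;=\; \sum_{i=1}^n \phi(x_i), \qquad \phi(s) := c\, w''(s) + c^2 w'(s)^2 - c\, W'(s)w'(s),
\]
while $\rho(x)=\min_i W''(x_i)$, because $\mathrm{Hess}\,V(x)$ is diagonal with entries $W''(x_i)$. Hence it is enough to find $c,\theta,\beta>0$ such that $\sum_i \phi(x_i)\le p\min_i W''(x_i)-\beta$ for every $x\in\mathbb{R}^n$, a one-dimensional estimate coupled with a minimum on the right-hand side.

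My plan is a two-scale argument based on the pointwise behaviour of $\phi$. Differentiating $w$ explicitly,
\[
\phi(s) = c(2-\alpha)(\theta+s^2)^{-\alpha/2-1}\bigl[\theta-(\alpha-1)s^2\bigr] + c^2(2-\alpha)^2 s^2(\theta+s^2)^{-\alpha} - c\alpha(2-\alpha)s^2(1+s^2)^{\alpha/2-1}(\theta+s^2)^{-\alpha/2}.
\]
Two features of this expression are crucial. First, as $|s|\to\infty$ the third term dominates and tends to $-c\alpha(2-\alpha)$, while the first decays like $s^{-\alpha}$ and the second like $s^{2-2\alpha}$ (bounded by $1$ when $\alpha=1$, decaying for $\alpha>1$); consequently for every $\eta\in(0,1)$ there exists $R=R(c,\theta,\alpha,\eta)$ with $\phi(s)\le -c\alpha(2-\alpha)(1-\eta)$ whenever $|s|>R$. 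Second, dropping the nonpositive third term and estimating the first two crudely gives the uniform bound
\[
A := \sup_{s\in\mathbb{R}}\phi(s) \;\le\; c(2-\alpha)\theta^{-\alpha/2} + c^2\, C_{\alpha,\theta},
\]
with $C_{\alpha,\theta}$ depending only on $\alpha,\theta$ and bounded for $\theta\ge 1$.

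The proof then proceeds by cases on $M:=\max_i |x_i|$. If $M\le R$, then $\rho(x)\ge w_R:=\min_{|s|\le R}W''(s)>0$ (a fixed positive number once $R$ is fixed, since $W''>0$ everywhere for $\alpha\in[1,2)$), so it suffices that $nA+\beta\le p\, w_R$. If instead $M>R$, there is an index $i_0$ with $|x_{i_0}|>R$, hence $\phi(x_{i_0})\le -c\alpha(2-\alpha)(1-\eta)$ while the other $n-1$ terms contribute at most $(n-1)A$; using only $p\rho(x)\ge 0$, it suffices that $-c\alpha(2-\alpha)(1-\eta)+(n-1)A+\beta\le 0$. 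These two inequalities can be fulfilled simultaneously by choosing parameters in order: first $\theta$ large enough that $(n-1)(2-\alpha)\theta^{-\alpha/2}\le\alpha/3$ (so the leading part of $(n-1)A$ is controlled by $c\alpha(2-\alpha)$); then $\eta=1/4$ and $R$ as produced by the asymptotics; then $c$ small enough both to make the $c^2$ correction in $A$ negligible and to ensure $nA+\beta\le p\,w_R$ (possible because $nA=O(c)$ while $p\, w_R$ is fixed); finally $\beta:=c\alpha(2-\alpha)/12$.

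The main technical point is the ordered interdependence of these choices: $\theta$ must be fixed first so that the $n$ positive contributions of order $\phi(0)\sim c(2-\alpha)\theta^{-\alpha/2}$ do not overwhelm the single negative contribution of order $c\alpha(2-\alpha)$ coming from the one large coordinate in the second case; $R$ is then determined by the asymptotics and depends on $c,\theta$; $c$ must be taken small enough to dominate both the $c^2$ correction in $A$ and the term $nA$ relative to the fixed quantity $p\,w_R$. The boundary case $\alpha=1$ is slightly delicate, since the term $c^2 w'(s)^2=c^2 s^2/(\theta+s^2)$ does not vanish at infinity but rather tends to $c^2$; however, taking $c\le 1/2$ ensures this is strictly dominated by the limit $-c$ of the third term, so the asymptotic negativity of $\phi$ is preserved.
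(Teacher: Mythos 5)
Your overall strategy is sound and genuinely different from the paper's: where the paper orders the coordinates so that the largest is $x_1$, reduces to a single one-dimensional inequality, and estimates $W''(x_1)$ from below by a power, you split into cases on $M=\max_i|x_i|$ and, when $M$ is large, throw away $p\rho(x)\ge 0$ entirely and rely on a single large coordinate producing a strictly negative $\phi(x_{i_0})$ to absorb the $n-1$ bounded positive contributions. That is a perfectly viable alternative route.

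However, the announced parameter choices do not close the argument for all $\alpha\in[1,2)$. In the case $M>R$ you need
$(n-1)A+\beta\le c\alpha(2-\alpha)(1-\eta)=\tfrac{3}{4}c\alpha(2-\alpha)$. You control the leading part of $(n-1)A$ by the constraint $(n-1)(2-\alpha)\theta^{-\alpha/2}\le\alpha/3$, which gives $(n-1)A\le \tfrac{c\alpha}{3}+O(c^2)$. Dividing the required inequality by $c$, the surviving condition as $c\to0$ is $\tfrac{1}{3}\le\tfrac{3}{4}(2-\alpha)$, i.e.\ $\alpha\le\tfrac{14}{9}$; for $\alpha\in[\tfrac{14}{9},2)$ there is no choice of $c,\beta$ that works with your stated $\theta$-constraint. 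The issue is that the budget on the right-hand side carries a factor $(2-\alpha)$ while your bound on the left does not. The fix is simple but necessary: impose instead something of the form $(n-1)(2-\alpha)\theta^{-\alpha/2}\le\tfrac{1}{4}\alpha(2-\alpha)$ (equivalently $(n-1)\theta^{-\alpha/2}\le\alpha/4$), so the dominant term of $(n-1)A$ is $\tfrac{1}{4}c\alpha(2-\alpha)$, strictly below $\tfrac{3}{4}c\alpha(2-\alpha)$ with room for $\beta$ and the $O(c^2)$ corrections.

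There is also a circularity in the stated order of choices: you announce $R=R(c,\theta,\alpha,\eta)$, yet fix $R$ before $c$ and then shrink $c$ to accommodate $w_R$. To make this rigorous you must show $R$ can be taken uniformly over $c$ in a fixed range (say $c\le 1/4$); this is true because $\phi(s)/c=w''(s)+cw'(s)^2-W'(s)w'(s)$ is monotone increasing in $c$, so one may verify the tail estimate at the top endpoint of the range of $c$ and conclude for all smaller $c$, but the argument as written does not address this. (Relatedly, for $\alpha=1$ the tail estimate $\phi\le -c(1-\eta)$ with $\eta=1/4$ forces $c\le 1/4$, not the $c\le 1/2$ you state.) Both issues are repairable, but as written the proof does not hold for the full stated range of $\alpha$.
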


\begin{proof}
Set $f(s)=c(\theta+s^2)^\frac{2-\alpha}{2}$, $s \in \mathbb{R}$ for some constant $c>0$, $\theta \geq 1$ to be determined later on. 
Then $g(x)=\prod_{i=1}^n e^{f(x_i)}$ satisfies 
$\frac{Lg}{g}\leq p\rho(x)-\beta$ iff
$\sum_{i=1}^{n}
\big(f''(x_{i})+f'(x_{i})^{2}-f'(x_{i})W'(x_i)\big)\leq p\rho(x)-\beta$. By symmetry, we can assume that 
$x_1\geq x_2\geq...\geq x_n\geq0$ so that the desired inequality reads
\begin{equation} \label{eq:start}
\sum_{i=1}^{n}
\big(f''(x_{i})+f'(x_{i})^{2}-f'(x_{i})W'(x_i)\big)\leq pW''(x_1)-\beta. 
\end{equation}
Set 
\begin{align*}
A(s) 
& = 
f''(s)+f'(s)^{2}-f'(s)W'(s) \\
& =    
c(2-\alpha) \left( 
\frac{1}{(\theta+s^2)^\frac{\alpha}{2}}
- \frac{\alpha s^2}{(\theta+s^2)^{\frac{\alpha}{2}+1}}
+ \frac{c(2-\alpha)s^2}{(\theta+s^2)^\alpha} 
-\alpha \left( \frac{1+s^2}{\theta+s^2} \right)^\frac{\alpha}{2} \frac{s^2}{1+s^2}
\right) \\
& \leq 
c \left( 
\frac{1}{(\theta+s^2)^\frac{\alpha}{2}}
+ \frac{c}{(\theta+s^2)^{\alpha-1}} 
- \left( \frac{1+s^2}{\theta+s^2} \right)^\frac{\alpha}{2} \frac{s^2}{1+s^2}
\right) \quad (\mbox{using that } (2-\alpha)s^2 \leq \theta + s^2) \\
& \leq 
c \left( 
 \frac{c+1}{\theta^{\alpha-1}} 
- \left( \frac{1+s^2}{\theta+s^2} \right)^\frac{\alpha}{2} \frac{s^2}{1+s^2}
\right) \phantom{AAAAAAAAA} (\mbox{since } (\theta+s^2)^{\alpha/2} \geq (\theta+s^2)^{\alpha-1} \geq \theta^{\alpha-1}).
\end{align*}
For simplicity we assume that $c \leq 1$ so that, since 
$\left( \frac{1+s^2}{\theta+s^2} \right)^\frac{\alpha}{2} \geq \frac{1+s^2}{\theta+s^2}$, 
$A(s) \leq c \left( 
 \frac{2}{\theta^{\alpha-1}} 
-  \frac{s^2}{\theta+s^2}
\right)$.
In particular, it always holds $A(s) \leq \frac{2c}{\theta^{\alpha-1}}$ for any $s \in \mathbb{R}$ so that it is enough to prove
$$
(n-1)\frac{2c}{\theta^{\alpha-1}} + A(x_1) \leq 
pW''(x_1)-\beta .
$$
Since
$$
W''(s) = \frac{\alpha(1+(\alpha-1)s^2)}{(1+s^2)^{2-\frac{\alpha}{2}}} 
\geq 
\frac{\alpha-1}{(1+s^2)^{1-\frac{\alpha}{2}}}
$$
we are left with proving the following one dimensional inequality
\begin{equation*} 
 c \left( \frac{2n}{\theta^{\alpha-1}}
  - \frac{s^2}{\theta+s^2} \right)
  \leq \frac{p(\alpha-1)}{(1+s^2)^{1-\frac{\alpha}{2}}} - \beta , \qquad s > 0 .
\end{equation*}
Choosing $\beta = c\frac{n}{\theta^{\alpha-1}}$ the above inequality reduces to
\begin{equation} \label{eq:start2}
c \left( \frac{3n}{\theta^{\alpha-1}}
  - \frac{s^2}{\theta+s^2} \right)
  \leq \frac{p(\alpha-1)}{(1+s^2)^{1-\frac{\alpha}{2}}} .
\end{equation}
Assume that $\alpha \in (1,2)$ and choose $\theta \geq 1$ so that $\frac{3n}{\theta^{\alpha-1}} \leq 1/2$. For $s \geq \sqrt{\theta}$ we observe that $\frac{s^2}{\theta+s^2} \geq \frac{1}{2}$.
Therefore for $s \geq \sqrt{\theta}$ the left hand side of \eqref{eq:start2} is negative and \eqref{eq:start2} is obviously satisfied (for any value of $c \leq 1$). For $s \leq \sqrt{\theta}$
\eqref{eq:start2} would be a consequence of 
$\frac{3cn}{\theta^{\alpha-1}}
  \leq \frac{p(\alpha-1)}{(1+\theta)^{1-\frac{\alpha}{2}}}$ that is satisfied as soon as $c \leq 1$ is chosen small enough. This ends the proof when $\alpha \neq 1$.

  The case $\alpha=1$ is similar (and in practice simpler) and left to the reader.
\end{proof}




\subsection*{Acknowledgment}
We warmly thank Max Fathi for useful discussions on the topic of this paper related to $RCD$ spaces and the exponential integrability inequality of Ivanisvili and Russel. This work received support from the University Research School EUR-MINT
(State support managed by the National Research Agency for Future
Investments program bearing the reference ANR-18-EURE-0023)

\bibliographystyle{alpha}
\bibliography{bibibi}

\end{document}